\newcommand{\cyclic}{\mathop{\kern0.9ex{{+}\kern-2.10ex\raise-0.20
      ex\hbox{\Large\hbox{$\circlearrowright$}}}}\limits}
\theoremstyle{definition}
\newtheorem{theorem}{Theorem}[section]
\newtheorem{proposition}[theorem]{Proposition}
\newtheorem{definition}[theorem]{Definition}
\newtheorem{lemma}[theorem]{Lemma}
\newtheorem{corollary}[theorem]{Corollary}
\newtheorem*{theorem*}{Theorem}
\newtheorem*{corollary*}{Corollary}
\newtheorem*{proposition*}{Proposition}
\newtheorem{conjecture}[theorem]{Conjecture}
\newtheorem{remark}[theorem]{Remark}
\newtheorem{example}[theorem]{Example}
\def\cC{\mathcal C }
\def\cE{\mathcal E}
\def\cP{\mathcal P}
\def\cO{\mathcal O}
\def\cS{\mathcal S }
\def\cK{\mathcal K}
\def\gf{{\mathfrak f}}
\newcommand{\C}{\mathbb{C}}
\renewcommand{\P}{\mathbb{P}}
\newcommand{\R}{\mathbb{R}}
\newcommand{\Z}{\mathbb{Z}}
\DeclareMathOperator{\Amp}{Amp}
\def\d{\mbox{d}}
\DeclareMathOperator{\Eff}{Eff}
\def\im{\mbox{Im}}
\def\Ker{\mbox{Ker}}
\def\NS{\mbox{NS}}
\DeclareMathOperator{\Pos}{Pos}
\DeclareMathOperator{\pr}{pr}
\def\rank{\mbox{rank}}
\def\Sing{\mbox{Sing}}
\def\Tors{\mbox{Tors}}
\DeclareMathOperator{\Pseff}{Pseff}
\DeclareMathOperator{\Nef}{Nef}
\DeclareMathOperator{\rk}{rk}
\DeclareMathOperator{\Id}{Id}
\DeclareMathOperator{\tr}{tr}
\DeclareMathOperator{\CI}{CI}
\DeclareMathOperator{\HR}{HR}
\numberwithin{equation}{section}
\begin{document}
\title[]{Generalized Bogomolov Inequalities}

\author{Mihai Pavel, Julius Ross, Matei Toma}

\address{Institute of Mathematics of the Romanian Academy,
P.O. Box 1-764, 014700 Bucharest, Romania
}
\email{cpavel@imar.ro}
\address{Department of Mathematics, Statistics, and Computer Science, University of Illinois at Chicago, 322 Science and Engineering Offices (M/C 249), 851 S. Morgan Street, Chicago, IL 60607
 }
\email{juliusro@uic.edu}
 
\address{ Universit\'e de Lorraine, CNRS, IECL, F-54000 Nancy, France
}

\email{Matei.Toma@univ-lorraine.fr}

\date{\today}
\keywords{semistable coherent sheaves, Hodge-Riemann relations, Bogomolov inequality}
\subjclass[2020]{14D20, 32G13, 32J27}

\begin{abstract} 
We introduce the notion of a Hodge-Riemann pair of cohomology classes that generalizes the classical Hodge-Riemann bilinear relations, and the notion of a Bogomolov pair of cohomology classes that generalizes the Bogomolov inequality for semistable sheaves.  We conjecture that every Hodge-Riemann pair is a Bogomolov pair, and prove various cases of this conjecture.  As an application we get new results concerning boundedness of semistable sheaves. 
\end{abstract}
\maketitle
\setcounter{tocdepth}{1}

\section{Introduction}

Suppose that $X$ is a compact complex manifold of dimension $d\ge 2$. 
When we work algebraically we will assume $X$ is a complex projective manifold; when we work analytically we will assume $X$ is a compact K\"ahler manifold.  In either case consider $$\eta_{d-1}\in H^{d-1,d-1}(X)\text{ and } \eta_{d-2}\in H^{d-2,d-2}(X).$$   Assume that each $\eta_{i}$ is ``positive'', which in the algebraic case we mean lying in the ample cone $\Amp^i(X)$, and in the analytic case in the interior $\cK^i(X)$ of the nef cone (see \cref{sec:setup} for the precise definitions of these cones).

\begin{definition}[Hodge-Riemann pairs of cohomology classes]\label{def:algebraicHR:intro}
We say $(\eta_{d-1},\eta_{d-2})$ is a \textit{Hodge-Riemann pair} if for any $\alpha$ in $ N^1(X)$ (resp. $H^{1,1}(X)$ in the analytic case)
$$\int_X \alpha \cdot \eta_{d-1} = 0 \Rightarrow \int_X \alpha^2 \cdot \eta_{d-2} \le 0$$ 
with equality if and only if $\alpha = 0$. (See also the more precise \cref{def:algebraicHRpair}.)
\end{definition}

The terminology comes from the fact that the classical Hodge-Riemann bilinear relations imply that if $h$ is the class of an ample divisor on $X$ then $(h^{d-1},h^{d-2})$ is a Hodge-Riemann pair.  This extends to the analytic case in which if $\omega$ is a K\"ahler form then $([\omega]^{d-1},[\omega]^{d-2})$ is a Hodge-Riemann pair.   

There are other natural Hodge-Riemann pairs that come from Schur polynomials $s_{\lambda}$.  To describe these, for any symmetric homogeneous polynomial $p$ in variables $x_1,\ldots,x_e$ we define the \emph{derived polynomials} $p^{(i)}$ by the rule
$$p(x_1+t,\ldots,x_{e}+t) = \sum_{i=0}^{\deg p}t^i p^{(i)}(x_1,\ldots,x_e).$$
Clearly $p^{(i)}$ is a symmetric homogeneous polynomial of degree $\deg p -i$, and $p^{(0)}=p$.  For simplicity we write $p' = p^{(1)}$.    If $A$ is a vector bundle we denote by $s_{\lambda}(A)$ the Schur class of $A$ and similarly for $s'_{\lambda}(A)$. 

\begin{proposition}[$\subset$ Proposition \ref{prop:ShurAmpleHRpair}] \label{prop:ShurAmpleHRpair:intro}
Let $X$ be a complex projective manifold of dimension $d\ge 2$,  let $\lambda$ be a partition of $d-1$ and assume $A$ is an ample vector bundle of rank $e\ge d-1$.    Then
$$(s_\lambda(A),s_\lambda'(A))$$ is a Hodge-Riemann pair. 

In particular, the pair $(c_{d-1}(A),c_{d-2}(A))$ of Chern classes as well as the pair $(s_{d-1}(A),s_{d-2}(A))$ of Segre classes are Hodge-Riemann pairs.
\end{proposition}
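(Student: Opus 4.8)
\textbf{The plan.} I would deduce the Hodge--Riemann pair property for $(s_\lambda(A),s_\lambda'(A))$ from the Hodge--Riemann (and hard Lefschetz) property of a \emph{single} Schur class $s_\mu(B)$ of an ample bundle $B$ with $|\mu|=\dim-2$ --- which I take as the main input --- by exploiting the twisting identity built into the derived polynomials. The geometric move is to pass from $X$ to $\widetilde X:=X\times\mathbb P^1$. Write $p_1,p_2$ for the projections, set $h:=p_2^*c_1(\mathcal O_{\mathbb P^1}(1))$ (so $h^2=0$), and put $B:=p_1^*A\otimes p_2^*\mathcal O_{\mathbb P^1}(1)$. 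Since $\mathbb P(B)\cong\mathbb P(A)\times\mathbb P^1$ with $\mathcal O_{\mathbb P(B)}(1)\cong\mathcal O_{\mathbb P(A)}(1)\boxtimes\mathcal O_{\mathbb P^1}(1)$, and $\mathcal O_{\mathbb P(A)}(1)$ is ample as $A$ is ample, the bundle $B$ is ample on $\widetilde X$; moreover $\operatorname{rk}B=e\ge d-1=\dim\widetilde X-2$ and $\lambda$ is a partition of $\dim\widetilde X-2$, so $s_\lambda(B)$ is exactly of the type covered by the single-class Hodge--Riemann theorem.

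\textbf{Reduction to a single class.} The Chern roots of $B$ are $p_1^*x_i+h$, where $x_1,\dots,x_e$ are the Chern roots of $A$; substituting $t=h$ into $s_\lambda(x_1+t,\dots,x_e+t)=\sum_i t^i s_\lambda^{(i)}(x)$ and using $h^2=0$ gives
$$s_\lambda(B)=p_1^*s_\lambda(A)+h\cdot p_1^*s_\lambda'(A).$$
Now decompose $N^1(\widetilde X)=p_1^*N^1(X)\oplus\mathbb R h$. For $\widetilde\alpha=p_1^*\alpha+ah$ and $\widetilde\beta=p_1^*\beta+bh$, expand $\int_{\widetilde X}\widetilde\alpha\,\widetilde\beta\,s_\lambda(B)$ using $h^2=0$, $\int_{\widetilde X}h\cdot p_1^*\mu=\int_X\mu$ for $\mu\in H^{2d}(X)$, and $\int_{\widetilde X}p_1^*\mu=0$ (the purely pulled-back term lies in $H^{2d+2}(X)=0$); what survives is
$$Q(\widetilde\alpha,\widetilde\beta):=\int_{\widetilde X}\widetilde\alpha\,\widetilde\beta\,s_\lambda(B)=q(\alpha,\beta)+a\,\ell(\beta)+b\,\ell(\alpha),$$
where $q(\alpha,\beta):=\int_X\alpha\beta\,s_\lambda'(A)$ and $\ell(\alpha):=\int_X\alpha\,s_\lambda(A)$. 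By the Hodge--Riemann and hard Lefschetz theorem for $s_\lambda(B)$ on $\widetilde X$, the form $Q$ is nondegenerate on $N^1(\widetilde X)$ with signature $(1,\ast)$.

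\textbf{Extracting the pair property, and the special cases.} It then remains to argue by linear algebra. First $\ell\neq0$, for otherwise $(0,1)$ would lie in the radical of $Q$. Fix $\alpha_0\in\ker\ell$ with $\alpha_0\neq0$ and consider the plane $P=\operatorname{span}\{(\alpha_0,0),(0,1)\}$; since $\ell(\alpha_0)=0$ one computes $Q|_P=\operatorname{diag}(q(\alpha_0,\alpha_0),0)$. If $q(\alpha_0,\alpha_0)=0$ then $P$ is a $2$-dimensional isotropic subspace, impossible for a nondegenerate form of signature $(1,\ast)$; if $q(\alpha_0,\alpha_0)>0$ then $(\alpha_0,0)$ spans the positive direction of $Q$, so $Q$ is negative definite on the $Q$-orthogonal complement $(\alpha_0,0)^{\perp}$, which contains the nonzero isotropic vector $(0,1)$ --- again impossible. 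Hence $q(\alpha_0,\alpha_0)<0$; that is, $\alpha\mapsto\int_X\alpha^2 s_\lambda'(A)$ is negative definite on $\{\alpha:\int_X\alpha\cdot s_\lambda(A)=0\}$, which is precisely the assertion that $(s_\lambda(A),s_\lambda'(A))$ is a Hodge--Riemann pair. For the ``in particular'' statements one uses $c_k(A)=s_{(1^k)}(A)$ and $s_k(A)=s_{(k)}(A)$; a direct computation gives $s_{(1^{d-1})}'(A)=(e-d+2)\,c_{d-2}(A)$ and $s_{(d-1)}'(A)=c\cdot s_{d-2}(A)$ with $e-d+2>0$ and $c>0$, and scaling $\eta_{d-2}$ by a positive constant preserves the Hodge--Riemann pair property (positivity of the classes involved being guaranteed by Fulton--Lazarsfeld-type positivity of Schur classes of ample bundles, as in \cref{sec:setup}).

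\textbf{Main obstacle.} Everything above is a formal reduction; the genuine content, and hence the main difficulty, is the single-class Hodge--Riemann plus hard Lefschetz theorem for Schur classes $s_\lambda(B)$ of ample vector bundles with $|\lambda|=\dim-2$. A secondary point requiring care is the precise rank bound in that theorem: the reduction needs it to hold whenever $\operatorname{rk}B\ge\dim\widetilde X-2$, which is exactly what makes the hypothesis $e\ge d-1$ the natural one here.
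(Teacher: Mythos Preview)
Your proof is correct and follows essentially the same route as the paper: pass to $\widetilde X=X\times\mathbb P^1$, use the ample bundle $B=A\boxtimes\mathcal O_{\mathbb P^1}(1)$ together with the identity $s_\lambda(B)=p_1^*s_\lambda(A)+h\cdot p_1^*s_\lambda'(A)$, and invoke the single-class Hodge--Riemann property for $s_\lambda(B)$ from \cite{RossToma1}. The only difference is in the final step: the paper picks the explicit primitive class $\hat\alpha=\alpha-\dfrac{\int_X\alpha\, s_\lambda'(A)\,h}{\int_X s_\lambda(A)\,h}\,\tau$ and applies the Hodge--Riemann inequality directly, whereas you analyse the full intersection form $Q$ on $N^1(\widetilde X)$ and argue by signature and isotropic-subspace considerations; the two arguments are equivalent.
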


When $A=\oplus_{i=1}^e L_i$ is a direct sum of ample line bundles $L_i$ this gives Hodge-Riemann pairs whose elements are certain polynomials in $c_1(L_1),\ldots,c_1(L_e)$.  This extends analytically to K\"ahler classes:

\begin{proposition}[$\subset$ Proposition \ref{prop:ShurHRpair}]\label{prop:ShurHRpair:intro}
Let $\alpha_1,\ldots,\alpha_e$ be K\"ahler classes on a compact complex manifold $X$ of dimension $d\ge 2$. Suppose that $e \ge d-1$ and let $\lambda$ be a partition of $d-1$. Then $$(s_\lambda(\alpha_1,\ldots,\alpha_e),s_\lambda'(\alpha_1,\ldots,\alpha_e))$$ is a Hodge-Riemann pair.
\end{proposition}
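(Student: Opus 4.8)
The plan is to reduce to the mixed Hodge--Riemann bilinear relations for products of K\"ahler classes and then to bridge the gap created by the failure of the Hodge--Riemann property to be additive. First I would use that Schur polynomials have non-negative coefficients in the monomial symmetric functions, writing
$$s_\lambda(\alpha_1,\dots,\alpha_e)=\sum_\mu c_\mu\,\alpha^\mu,\qquad c_\mu\ge 0,$$
the sum running over degree $d-1$ monomials $\alpha^\mu=\alpha_{i_1}\cdots\alpha_{i_{d-1}}$ in the $\alpha_i$. Since the formal substitution $x_j\mapsto x_j+t$ commutes with differentiation and preserves non-negativity of coefficients, $s'_\lambda(\alpha_1,\dots,\alpha_e)=\sum_\mu c_\mu\,(\alpha^\mu)'$, and writing a monomial as a product $\alpha^\mu=\beta_1\cdots\beta_{d-1}$ of K\"ahler classes (with repetitions allowed) we get $(\alpha^\mu)'=\sum_{k=1}^{d-1}\prod_{l\ne k}\beta_l$.

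\emph{The monomial case.} Next I would check that each pair $\bigl(\beta_1\cdots\beta_{d-1},\ \sum_{k}\prod_{l\ne k}\beta_l\bigr)$ is a Hodge--Riemann pair. For each $k$ consider the symmetric bilinear form $q_k$ on $H^{1,1}(X)$ given by $q_k(\gamma,\delta)=\int_X\gamma\cdot\delta\cdot\prod_{l\ne k}\beta_l$. As $\prod_{l\ne k}\beta_l$ is a product of $d-2$ K\"ahler classes, the mixed Hodge--Riemann relations (Dinh--Nguy\^en, Gromov, Timorin; for $d=2$ this is the classical Hodge index theorem) show that $q_k$ has exactly one positive eigenvalue, and since $q_k(\beta_k,\beta_k)=\int_X\beta_k^2\cdot\prod_{l\ne k}\beta_l>0$ the class $\beta_k$ spans the positive direction, so $q_k$ is negative definite on the hyperplane $\{\gamma:q_k(\gamma,\beta_k)=0\}$. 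If $\int_X\alpha\cdot\beta_1\cdots\beta_{d-1}=0$ then $q_k(\alpha,\beta_k)=0$ for every $k$, whence $q_k(\alpha,\alpha)\le 0$ with equality only for $\alpha=0$; summing over $k$ gives $\int_X\alpha^2\cdot(\alpha^\mu)'\le 0$, again with equality only for $\alpha=0$.

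\emph{From monomials to Schur classes.} This last step is the heart of the matter and the one I expect to be the main obstacle, because the single hypothesis $\int_X\alpha\cdot\eta_{d-1}=0$ does not force $\int_X\alpha\cdot\alpha^\mu=0$ for the individual monomials, so the previous step cannot be applied term by term; indeed the Hodge--Riemann property is not preserved under general non-negative linear combinations. To handle it I would pass to the single auxiliary polynomial
$$g(v)=\int_X s_\lambda(\alpha_1+v\alpha,\dots,\alpha_e+v\alpha)\cdot\alpha,$$
which, using $s_\lambda(x_1+t,\dots,x_e+t)=\sum_i t^i\,s^{(i)}_\lambda(x)$, satisfies $g(0)=\int_X\alpha\cdot\eta_{d-1}$ and $g'(0)=\int_X\alpha^2\cdot\eta_{d-2}$, so the assertion becomes precisely that $g(0)=0$ implies $g'(0)\le 0$, strictly unless $\alpha=0$. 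Establishing this should require the finer structure of $s_\lambda$ --- that Schur polynomials are Lorentzian --- together with the reverse Cauchy--Schwarz (Khovanskii--Teissier) inequalities furnished by the forms $q_k$ above, so as to control the cross terms between different monomials; one might alternatively transport the classical Hodge--Riemann relations down from a flag bundle $\pi\colon Y\to X$ carrying a K\"ahler class whose fibrewise top power pushes forward to $s_\lambda(\alpha_1,\dots,\alpha_e)$, or run a continuity argument establishing the strict inequality first for $\alpha$ in a dense set of test classes. In all approaches the first two steps are formal, and the genuine work --- the place where the log-concavity (Lorentzian) combinatorics of Schur functions must enter --- is assembling the monomial-wise negativity into the single inequality under the single linear constraint, together with the analysis of the equality case.
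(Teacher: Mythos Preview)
Your proposal is not a complete proof: you correctly identify that step~3 is where the substance lies, and you offer several heuristics (Lorentzian structure, flag-bundle pushforward, continuity) but do not carry any of them out. The monomial step is fine, and your reformulation via $g(v)$ is correct, but ``$g(0)=0\Rightarrow g'(0)\le 0$'' is exactly the statement to be proved and nothing you wrote reduces it to known facts. The obstruction you diagnose is real: the Hodge--Riemann property is \emph{not} stable under non-negative combinations, so expanding $s_\lambda$ into monomials and quoting Dinh--Nguy\^en term by term cannot work without additional input.

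The paper takes an entirely different route that sidesteps this obstruction. Rather than decomposing $s_\lambda$, it goes \emph{up} one dimension. On $E\oplus F$ (the linear model of $X\times\mathbb P^1$) with an extra positive $(1,1)$-form $\theta$ on the $1$-dimensional factor, the single class
\[
\hat s_\lambda \;=\; s_\lambda(\omega_1+\theta,\dots,\omega_e+\theta)\;=\;s_\lambda(\omega)+s'_\lambda(\omega)\wedge\theta
\]
is a Schur class in $d+1$ variables and therefore already has the Hodge--Riemann property in bidegree $(1,1)$ by the earlier results of Ross--Toma. Given $\alpha$ on $X$, one lifts it to $\hat\alpha=\alpha-\kappa\theta$ with $\kappa$ chosen so that $\hat\alpha\wedge\hat s_\lambda\wedge\omega=0$; then $\hat\alpha^2\wedge\hat s_\lambda\le 0$ unwinds to the inequality
\[
(\alpha^2\wedge s'_\lambda)(s_\lambda\wedge\omega)\;\le\;2\,(\alpha\wedge s_\lambda)(\alpha\wedge s'_\lambda\wedge\omega),
\]
and the Hodge--Riemann property of the pair $(s_\lambda,s'_\lambda)$ follows at once when $\alpha\wedge s_\lambda=0$. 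The whole argument is a few lines and never breaks $s_\lambda$ into pieces; the hard combinatorics you anticipated is absorbed into the single citation that $\hat s_\lambda$ has the Hodge--Riemann property on the $(d{+}1)$-dimensional space. Your flag-bundle suggestion points in a similar direction, but the actual device is simpler: a product with a one-dimensional factor, together with the observation that the derived polynomial $s'_\lambda$ appears automatically as the $\theta$-coefficient of $s_\lambda$ after the shift.
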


 In fact our proof is stronger, and \cref{prop:ShurHRpair} gives a pointwise statement about analogous polynomials of K\"ahler forms.
\begin{center}
*
\end{center}

Our motivation for introducing Hodge-Riemann pairs is a belief that they form the right setting for a generalization of the Bogomolov inequality.  Any positive class $\eta_{d-1}\in H^{d-1,d-1}(X)$ defines for each torsion-free coherent sheaf $E$ on $X$ a slope 
$$\mu_{\eta_{d-1}}(E): = \frac{\int_X c_1(E) \cdot \eta_{d-1}}{\rank(E)}$$
from which one gets a notion of  (semi)stability with respect to $\eta_{d-1}$.

\begin{conjecture}
Suppose $(\eta_{d-1},\eta_{d-2})$ is a Hodge-Riemann pair.  Then for any $\eta_{d-1}$-semistable torsion-free sheaf $E$ of rank $r$ on $X$ we have
$$\int_X (2rc_2(E) - (r-1)c_1(E)^2)\cdot \eta_{d-2}\ge 0.$$
\end{conjecture}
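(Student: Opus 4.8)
We outline a strategy and indicate where the difficulty lies. Throughout write $\Delta(E) = 2rc_2(E) - (r-1)c_1(E)^2$, so that the asserted inequality is $\int_X \Delta(E)\cdot\eta_{d-2}\ge 0$.

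\textbf{Step 1: reduction to the stable case.} For a short exact sequence $0\to E'\to E\to E''\to 0$ of torsion-free sheaves of ranks $r',r'',r=r'+r''$, additivity of the Chern character yields the identity
\begin{equation}\label{eq:bogdiscradd}
\frac{\Delta(E)}{r} = \frac{\Delta(E')}{r'} + \frac{\Delta(E'')}{r''} - \frac{r'r''}{r}\Bigl(\frac{c_1(E')}{r'} - \frac{c_1(E'')}{r''}\Bigr)^{2}.
\end{equation}
If $E$ is $\eta_{d-1}$-semistable it admits a Jordan--H\"older filtration whose successive quotients are $\eta_{d-1}$-stable of the common slope $\mu_{\eta_{d-1}}(E)$. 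Applying \eqref{eq:bogdiscradd} repeatedly along this filtration, the class $\xi := c_1(E')/r' - c_1(E'')/r''$ occurring at each stage lies in $N^1(X)$ (resp. $H^{1,1}(X)$) and satisfies $\int_X \xi\cdot\eta_{d-1} = 0$, since the slopes of the sub and quotient agree. Because $(\eta_{d-1},\eta_{d-2})$ is a Hodge--Riemann pair, $\int_X \xi^2\cdot\eta_{d-2}\le 0$, so each correction term in \eqref{eq:bogdiscradd} is nonnegative after pairing with $\eta_{d-2}$. Hence it suffices to prove $\int_X \Delta(G)\cdot\eta_{d-2}\ge 0$ for each stable Jordan--H\"older factor $G$. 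This is the one place the full strength of the Hodge--Riemann hypothesis, rather than mere positivity of $\eta_{d-2}$, is visibly used from the outset.

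\textbf{Step 2: the surface case and restriction.} When $d=2$ one has $\eta_0\in H^{0,0}(X)$ a positive real number, the pair $(\eta_1,\eta_0)$ is automatically Hodge--Riemann by the Hodge index theorem, and the conjecture reduces to the classical Bogomolov inequality for a K\"ahler polarization, which is known (via Bogomolov's argument applied to $E\otimes E^{\vee}$, together with approximation of $\eta_1$ by rational classes). For $d\ge 3$ and pairs of product type --- say $\eta_{d-1} = \beta_1\cdots\beta_{d-1}$ and $\eta_{d-2} = \sum_{j}\beta_1\cdots\hat\beta_j\cdots\beta_{d-1}$ with the $\beta_i$ ample or K\"ahler, which is the shape of the Hodge--Riemann pairs produced by \cref{prop:ShurAmpleHRpair:intro} and \cref{prop:ShurHRpair:intro} --- the plan is to pass to a general complete intersection surface $S = D_1\cap\cdots\cap D_{d-2}$ with $D_i$ a large multiple of $\beta_i$, restrict, and invoke the two-dimensional case. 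This requires a Mehta--Ramanathan-type theorem guaranteeing that $E|_S$ remains semistable with respect to the restriction of an appropriate $\beta$; such mixed restriction theorems are available in many situations, and for general $S$ one moreover has $E|_S$ locally free with $c_i(E|_S)=c_i(E)|_S$, which settles the torsion-free (versus locally free) case automatically. Summing the resulting inequalities over $j$ gives the claim for such pairs.

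\textbf{Step 3: the general case and the main obstacle.} For an arbitrary Hodge--Riemann pair there need be no ample or K\"ahler slicing direction, hence no restriction theorem to invoke. The natural intrinsic substitute is to pass to $\mathcal{E}nd(E)=E\otimes E^{\vee}$, which is $\eta_{d-1}$-semistable of slope $0$ with $c_1=0$ and $c_2(\mathcal{E}nd(E)) = \Delta(E)$; this reduces the conjecture to the statement that every $\eta_{d-1}$-semistable torsion-free sheaf $F$ with $c_1(F)=0$ satisfies $\int_X c_2(F)\cdot\eta_{d-2}\ge 0$. In the classical setting $\eta_{d-1}=h^{d-1},\ \eta_{d-2}=h^{d-2}$ one proves this by feeding large tensor (or symmetric) powers of $F$, twisted by powers of $h$, into an asymptotic Riemann--Roch estimate and playing it against the polynomial bound on $h^0$ of semistable sheaves: a negative value of $\int_X c_2(F)\cdot h^{d-2}$ forces $\chi$ to grow faster than semistability allows. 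The obstruction to running this for a general positive class $\eta_{d-2}$ is precisely that there is no numerical ``Hilbert polynomial'' attached to $\eta_{d-2}$ to play the role of $\chi$. Overcoming it appears to require either a new asymptotic invariant associated to $\eta_{d-2}$, or a Hodge-index argument on an auxiliary variety --- for instance on $\P(E)$ or on a flag bundle, where the tautological class interacts with $\pi^{*}\eta_{d-1}$ so that the Hodge--Riemann property of the pair $(\eta_{d-1},\eta_{d-2})$ becomes the decisive input. I expect this last point to be the crux, and the reason the statement is only conjectural in full generality.
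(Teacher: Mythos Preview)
The statement is a \emph{conjecture} in the paper; there is no full proof to compare against, and you are right to conclude in Step~3 that the general case remains open.  Your Step~1 is essentially the paper's \cref{lem:reflexivestable}: the same discriminant identity \eqref{eq:bogomolovexact} together with the Hodge--Riemann hypothesis reduces the semistable case to the stable (in fact stable reflexive) case, by induction on the rank.

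Your Step~2, however, contains a real error.  The Schur pairs $(s_\lambda(A),s'_\lambda(A))$ of \cref{prop:ShurAmpleHRpair:intro} and \cref{prop:ShurHRpair:intro} are \emph{not} of the product type $\eta_{d-1}=\beta_1\cdots\beta_{d-1}$, $\eta_{d-2}=\sum_j \beta_1\cdots\widehat{\beta_j}\cdots\beta_{d-1}$ with ample $\beta_i$ --- for instance $c_{d-1}(A)$ for a higher-rank ample bundle $A$ is typically not a complete intersection class --- so your Mehta--Ramanathan restriction scheme does not apply to them.  The paper's genuine complete-intersection case is \cref{prop:complete_intersections}, proved via Langer's restriction theorem and a rational approximation argument, and it is strictly weaker than the Schur cases.

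For the Schur pairs the paper instead uses two routes you do not develop.  For Schur polynomials of K\"ahler classes (\cref{thm:HRimpliesBG}) it exploits that $(s_\lambda(\omega_1,\ldots,\omega_e),s'_\lambda(\omega_1,\ldots,\omega_e))$ is a Hodge--Riemann pair of \emph{forms}, not merely classes: one resolves $E$ to a bundle on a modification $\hat X$, applies the Hitchin--Kobayashi correspondence for the Gauduchon metric $\sqrt[d-1]{s_\lambda}$, and runs the pointwise Kobayashi--L\"ubke computation (\cref{prop:Bogomolov-HRpairs}).  For Segre classes of ample bundles (\cref{cor:segrebogomolovpair}) the paper does exactly the auxiliary-variety trick you speculate about at the end of Step~3: pull $E$ back to $\P(A)$, show semistability is preserved with respect to the tautological class $\xi_t$, and apply the classical Bogomolov inequality there using $s_{d-1}(A\langle th\rangle)=\pi_*\xi_t^{d+e-2}$.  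Your $\mathcal{E}nd(E)$/asymptotic Riemann--Roch idea is not pursued in the paper, and as you note it founders precisely because a general $\eta_{d-2}$ carries no Hilbert-polynomial-type invariant.
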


When the conclusion of this conjecture holds we call $(\eta_{d-1},\eta_{d-2})$ a \emph{Bogomolov pair}.  The terminology comes from the fact that the classical Bogomolov inequality \cite[Theorem 7.3.1]{HL} states that if $h$ is an ample class then $(h^{d-1},h^{d-2})$ is a Bogomolov pair.

In this paper we prove various special cases of this conjecture. For example we know this conjecture holds for threefolds (\cref{prop:complete_intersections}) and for complex tori (\cref{cor:tori}). We also have the following:

\begin{theorem}[= Theorem \ref{thm:HRimpliesBG}]\label{thm:HRimpliesBG:intro}
Suppose $X$ is a compact complex manifold of dimension $d$.  Let $\alpha_1,\ldots,\alpha_e$ be K\"ahler classes on $X$ with $e\ge d-1$, and let $\lambda$ be a partition of length $d-1$.  Then 
$$(s_{\lambda}(\alpha_1,\ldots,\alpha_e),s'_{\lambda}(\alpha_1,\ldots,\alpha_e))$$
is a Bogomolov pair.
\end{theorem}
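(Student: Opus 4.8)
The plan is to reduce the Bogomolov-pair statement for Schur classes of Kähler classes to the Hodge--Riemann-pair statement already established in \cref{prop:ShurHRpair:intro}, via a Kobayashi--Hitchin/Uhlenbeck--Yau type argument adapted to the twisted polarization $\eta_{d-1}=s_\lambda(\alpha_1,\ldots,\alpha_e)$. First I would fix an $\eta_{d-1}$-semistable torsion-free sheaf $E$ of rank $r$ and recall the standard restriction/resolution reductions: one may pass to a smooth model, remove the singular locus of $E$ (which has codimension $\ge 2$, hence does not affect the intersection numbers against $\eta_{d-2}$), and so assume $E$ is a holomorphic vector bundle; one may also twist $E$ by a line bundle to normalize $c_1(E)$ without changing the Bogomolov discriminant $\Delta(E)=2rc_2(E)-(r-1)c_1(E)^2$.

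The heart of the argument should be an analytic identity expressing $\int_X\Delta(E)\cdot\eta_{d-2}$ in terms of curvature. Choosing Kähler forms $\omega_i\in\alpha_i$ and forming the positive $(d-1,d-1)$-form $\Omega:=s_\lambda(\omega_1,\ldots,\omega_e)$ representing $\eta_{d-1}$, one runs the Hermitian--Einstein theory for the Gauduchon-type metric attached to $\Omega$: an $\Omega$-semistable bundle admits an approximate Hermitian--Einstein structure (this is where the pointwise positivity in the stronger form of \cref{prop:ShurHRpair} is needed — it guarantees $\Omega>0$ as a form, so the associated degree functional behaves well and the Donaldson/Uhlenbeck--Yau machinery applies). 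For such a metric one writes the curvature $F$ as $F=F_0\otimes\mathrm{Id}+F^{\perp}$ with $F^\perp$ trace-free, obtaining
$$\int_X \Delta(E)\cdot\eta_{d-2} \;=\; \int_X \big(2r\,c_2(E)-(r-1)c_1(E)^2\big)\wedge s'_\lambda(\omega_1,\ldots,\omega_e) \;=\; \mathrm{const}\cdot\int_X \big|F^\perp\big|^2_{\Omega}\;\ge\;0,$$
where the key point is that the quadratic form on trace-free $(1,1)$-forms defined by wedging with $\eta_{d-2}=s'_\lambda$ and integrating is negative semidefinite on the primitive part — and this is exactly the content of the Hodge--Riemann-pair property for $(s_\lambda,s'_\lambda)$ applied fibrewise (or, more precisely, to each trace-free endomorphism-valued component of the curvature, using that the approximate Einstein condition makes $\Lambda_\Omega F^\perp$ small in $L^2$). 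One then takes a limit over approximate solutions to conclude.

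The main obstacle I expect is precisely the passage from the cohomological Hodge--Riemann inequality to the pointwise/fibrewise curvature estimate: the Hodge--Riemann pair condition as stated is a global statement about classes $\alpha\in H^{1,1}(X)$, whereas the curvature identity requires controlling $\int_X \langle F^\perp\wedge F^\perp\rangle \wedge s'_\lambda(\omega_\bullet)$ where $F^\perp$ is a genuine trace-free $(1,1)$-form (not closed, not of pure type globally). Bridging this gap is where I would invoke the stronger pointwise version alluded to after \cref{prop:ShurHRpair:intro}: one needs that the symmetric bilinear form $(\beta,\gamma)\mapsto \beta\wedge\gamma\wedge s'_\lambda(\omega_1,\ldots,\omega_e)$ on real $(1,1)$-forms has the correct signature at every point of $X$ — one positive direction (spanned by the $\Omega$-trace part) and the rest negative — which is a linear-algebra fact about the Schur polynomial of positive Hermitian forms and should follow from the same computation that proves \cref{prop:ShurHRpair:intro}. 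Secondary technical points — making sense of the degree and Hermitian--Einstein theory for the non-Kähler Gauduchon-type polarization $\Omega$, and controlling the limit of approximate Hermitian--Einstein metrics on the possibly-only-reflexive sheaf $E$ — are handled by now-standard arguments (Bando--Siu, Li--Yau, Jacob), and I would cite these rather than reprove them. Finally, the cases $d=3$ and complex tori referenced in the introduction can be used as sanity checks that the normalization of the curvature identity is correct.
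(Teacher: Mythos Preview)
Your high-level strategy matches the paper's: reduce to an analytic curvature computation via the Hitchin--Kobayashi correspondence, and use the \emph{pointwise} Hodge--Riemann property of $(s_\lambda(\omega_\bullet),s'_\lambda(\omega_\bullet))$ (your anticipated ``stronger pointwise version'' is exactly \cref{prop:ShurHRpair}, and the curvature identity you sketch is \cref{prop:Bogomolov-HRpairs}). However, two of your reductions do not go through as written, and the paper takes a different route in each case.

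First, you cannot ``remove the singular locus of $E$'' and work on the complement: the Hermitian--Einstein machinery needs compactness. The paper instead takes a resolution $p:\hat X\to X$ (Rossi/W\l odarczyk) making $\hat E:=p^*E/\Tors$ locally free on a compact $\hat X$. But then a new problem appears that you do not mention: the pulled-back forms $p^*\omega_i$ are only semi-positive on $\hat X$, so $s_\lambda(p^*\omega_\bullet)$ need not be strictly positive and there is no Gauduchon $(d-1)$-st root. The paper's fix is to perturb, setting $\hat\omega_{i,\varepsilon}=p^*\omega_i+\varepsilon\theta$ for a K\"ahler form $\theta$ on $\hat X$, to verify that $\hat E$ is stable with respect to $[s_\lambda(\hat\omega_{\bullet,\varepsilon})]$ for small $\varepsilon>0$ (a ``pseudo-stability'' argument, since at $\varepsilon=0$ the class is degenerate), to apply the vector-bundle inequality on $\hat X$, and then to let $\varepsilon\to 0$ and push forward. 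This perturbation-and-limit step is the main technical content of the proof and is absent from your sketch.

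Second, the paper does \emph{not} use approximate Hermitian--Einstein metrics to handle the semistable case. Such a theory is not established for Gauduchon (non-K\"ahler) polarizations; the paper explicitly flags this as an open issue in the remark following \cref{prop:Bogomolov-HRpairs}. Likewise the Bando--Siu theory you cite for reflexive sheaves is only available for genuine K\"ahler polarizations (cf.\ \cref{prop:Bogomolov-HRpairs}(3)), which is precisely why the blowup route above is needed. Instead, the reduction from semistable to stable reflexive is carried out algebraically in \cref{lem:reflexivestable}: one uses the Hodge--Riemann property of the pair together with the discriminant identity \eqref{eq:bogomolovexact} to induct on the rank. So your appeal to ``now-standard arguments (Bando--Siu, Li--Yau, Jacob)'' is overoptimistic in this Gauduchon setting---only the Li--Yau existence result for stable \emph{bundles} is actually invoked.
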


Our proof of this theorem is analytic; when $E$ is a stable vector bundle we use the Hitchin-Kobayashi correspondence and the same computation due to L\"ubke, \cite{Luebke}, that computes the pointwise discriminant with respect to the Hermitian-Einstein metric taken with respect to the Gauduchon metric $\sqrt[d-1]{s_{\lambda}(\alpha_1,\ldots,\alpha_e)}.$   We then extend this to apply to any stable torsion free sheaf using a resolution, and then to semistable sheaves using induction on the rank.   

The proofs of the remaining theorems are independent of this, and do not rely on the L\"ubke computation.

\begin{theorem}[= Corollary \ref{cor:segrebogomolovpairI}]\label{cor:segrebogomolovpair:intro}
Let $A$ be an ample vector bundle of rank at least $d-1$ on $X$ such that
\[
    s_d(A) \ge \mu_{\max, s_{d-1}(A)}(A),
\]
where $\mu_{\max,s_{d-1}(A)}(A)$ denotes the maximal slope with respect to $s_{d-1}(A)$ of non-trivial subsheaves of $A$. Then the Segre classes $$(s_{d-1}(A),s_{d-2}(A))$$ form a Bogomolov pair.
\end{theorem}

We expect the analogous statement to hold also for Schur classes, but can only prove this in a special case:

\begin{theorem}[= \cref{thm:globallygenerated_chern}]\label{thm:globallygeneratedintro_chern}
Suppose that $A$ is a globally generated and ample vector bundle of rank at least $d-1$ on $X$ such that
\[
    c_d(A) > \mu_{\max, c_{d-1}(A)}(A).
\]
Then
$$(c_{d-1}(A),c_{d-2}(A))$$
is a Bogomolov pair.
\end{theorem}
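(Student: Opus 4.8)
The plan is to reduce to the case already handled and then exploit global generation via a Bertini-type argument. By \cref{prop:ShurAmpleHRpair} the pair $(c_{d-1}(A),c_{d-2}(A))$ is a Hodge-Riemann pair, so what is at stake is upgrading the Hodge-Riemann property to the Bogomolov property in this particular situation. The key observation is that if $A$ is globally generated of rank $e\ge d-1$, then a general subspace $V\subset H^0(X,A)$ of dimension $e$ yields an evaluation map $\cO_X^{\oplus e}\to A$ which is generically surjective, and the degeneracy locus $Z$ where it fails to have full rank has codimension $\ge 2$ (the expected codimension being $e-(e-1)+1=2$, by the general position statement for maps of vector bundles applied to the tautological family over the relevant Grassmannian). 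Taking the quotient $Q$ of $A$ by the image of a general such $V$ restricted away from $Z$, or better, realizing $A$ as globally generated forces a short exact sequence $0\to K\to\cO_X^{\oplus e}\to A\to 0$ on the locus where the chosen sections are independent; one then computes Chern classes and reduces the required inequality for $A$ to one involving $K$. The cleanest route is: choose a general surjection $\cO_X^{\oplus N}\twoheadrightarrow A$ and cut down to get, after removing a codimension-$\ge 2$ locus, an exact sequence relating $A$ to a bundle of smaller rank, then induct on $e$.

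Concretely, first I would establish the base case $e=d-1$: here $c_{d-1}(A)$ and $c_{d-2}(A)$ are the top and next-to-top Chern classes, and global generation plus ampleness means a general $(d-1)$-dimensional $V\subset H^0(A)$ gives $\ev_V\colon\cO_X^{\oplus(d-1)}\to A$ whose degeneracy locus $D=\{x: \ev_V \text{ not injective}\}$ has the expected codimension, which in rank $d-1$ on a $d$-fold is $d$, hence $D$ is a finite set of points, and off $D$ we get an isomorphism of $\cO^{\oplus(d-1)}$ with $A$; the point is that the cycle-theoretic class $c_{d-1}(A)$ is then supported on $D$ and $c_{d-2}(A)$ is a pullback-type class from the trivial bundle plus a correction supported near $D$. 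This should let one compute $\mu_{c_{d-1}(A)}$ and the relevant discriminant integral explicitly. For the inductive step $e>d-1$, a general section $s\in H^0(A)$ vanishes on a locus of codimension $e$ which is empty (since $e>d-1\ge d$... careful: $e\ge d$ forces empty vanishing locus, $e=d-1$ gives finitely many points), so $s$ gives $0\to\cO_X\to A\to A'\to 0$ with $A'$ a globally generated ample vector bundle of rank $e-1\ge d-1$; then $c(A)=c(A')$, in particular $c_{d-1}(A)=c_{d-1}(A')$ and $c_{d-2}(A)=c_{d-2}(A')$, and we are done by induction. So in fact the whole theorem collapses to the case $e=d-1$.

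For the base case $e=d-1$ I would argue as follows. Pick a general pencil, i.e.\ a general $(d-1)$-dimensional $V\subset H^0(X,A)$; the cokernel sheaf $F$ of the generically-injective map $\cO_X^{\oplus(d-1)}\xrightarrow{\ev_V} A$ is supported in codimension $\ge 1$, while the map is injective in codimension $\ge\dim X - 0$, i.e.\ is injective as a sheaf map with cokernel $F$ a torsion sheaf supported on a divisor class equal to $c_1(A)$ (since $\det$ of the map is a section of $\det A$ vanishing on that divisor). Then a semistable torsion-free $E$ with respect to the slope $\mu_{c_{d-1}(A)}$: I would try to relate this slope, and the discriminant pairing against $c_{d-2}(A)$, to the classical Bogomolov inequality for $E$ restricted to a general complete intersection surface cut out by sections of $A$ — indeed if $s_1,\dots,s_{d-2}\in H^0(A)$ are general, their common vanishing (interpreted via the diagonal/section of $A^{\oplus(d-2)}$, or via successive quotients as above) should be a smooth surface $S$ with $[S]=c_{d-2}(A)$ up to the inductive reduction, and $A|_S$ still ample. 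This is where I expect the main obstacle: making precise that $c_{d-2}(A)$ is represented by (a positive multiple of the class of) an honest smooth surface $S$ on which $E$ restricts to a semistable sheaf, so that one may apply the classical Bogomolov inequality on $S$. The restriction-of-semistability step is the delicate one — it requires a Mehta–Ramanan/Flenner-type restriction theorem for the polarization $c_{d-1}(A)$, or a direct argument that $\mu_{c_{d-1}(A)}$-semistability of $E$ passes to $\mu_{h|_S}$-semistability of $E|_S$ for the appropriate $h$ — and handling it for a general member of the globally generated, ample linear system is the crux of the proof.
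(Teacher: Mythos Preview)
Your approach is genuinely different from the paper's, and the gap you yourself flag is fatal as stated. The paper never restricts to a surface at all: instead it uses global generation to write $0\to K\to\cO_X^{N}\to A\to 0$, so that $c(A)=s(K^\vee)$; since $K^\vee$ is a quotient of $\cO_X^N$ it is nef, hence $K^\vee\langle th\rangle$ is ample for $t>0$, and the already-established Segre case (\cref{cor:segrebogomolovpair}) gives $\int_X\Delta(E)\,s_{d-2}(K^\vee\langle th\rangle)\ge 0$ for any reflexive $E$ that is $s_{d-1}(K^\vee\langle th\rangle)$-stable. Openness of stability (\cref{lem:stabilityisopen}) lets one pass from $c_{d-1}(A)$-stability to $s_{d-1}(K^\vee\langle th\rangle)$-stability for small $t$, and one then lets $t\to 0^+$. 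No restriction theorem is needed.

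Your route, by contrast, hinges on a Mehta--Ramanathan-type statement that simply is not available: you need $c_{d-1}(A)$-semistability of $E$ on $X$ to imply semistability of $E|_S$ with respect to some ample class on a degeneracy surface $S$ representing $c_{d-2}(A)$. Standard restriction theorems apply to complete intersections of high-degree hypersurfaces in $|mH|$ for the \emph{same} polarisation $H$, not to degeneracy loci of sections of a bundle, and here the ambient polarisation is the movable class $c_{d-1}(A)$, not a power of a divisor. You acknowledge this as ``the crux'', but you give no argument for it, and I do not see one.

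There are also smaller problems. Your inductive reduction from rank $e$ to $e-1$ breaks precisely at $e=d$: a general section of $A$ then vanishes at finitely many points (its zero locus has class $c_d(A)$, typically nonzero), so the quotient $A'$ is not locally free and you cannot feed it back into the induction. And in the base case the description of $S$ is off: for $A$ of rank $d-1$, the dependency locus of \emph{two} general sections is a surface with class $c_{d-2}(A)$ (Thom--Porteous), not the ``common vanishing of $d-2$ sections'', which is generically empty. These are fixable, but the missing restriction theorem is not.
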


\begin{center}
*
\end{center}

As an application we get new results concerning boundedness of torsion-free semistable sheaves of a given topological type (see \cref{def:boundedFam} for the definition of a \textit{bounded} set). The following technical statement gives a general condition under which we can prove boundedness of such sheaves that are semistable with respect to a class $\eta_{d-1} \in \Amp^{d-1}(X)$ as long as there is an $\eta_{d-2}$ making $(\eta_{d-1},\eta_{d-2})$ a Hodge-Riemann and Bogomolov pair. In fact it proves more in that this pair can vary in a compact set:

\begin{theorem}[$\subset$ Theorem \ref{thm:MainBound}]
Let $X$ be a projective manifold of dimension $d$. Let $K \subset \Amp^{d-1}(X) \times \Amp^{d-2}(X)$ be a path-connected compact subset, and denote by $K' := \pr_1(K)$ and $K'' := \pr_2(K)$ its two corresponding projections. Suppose that
\begin{enumerate}
    \item there is an element $(h^{d-1},h^{d-2})\in K$ for some $h\in \Amp^1(X)$, and
    \item for every $\eta_{d-1} \in K'$ there exists $\eta_{d-2} \in K''$ and a path $$\gamma_{(\eta_{d-1},\eta_{d-2})} : [0,1] \to K$$ connecting $(\eta_{d-1},\eta_{d-2})$ to $(h^{d-1},h^{d-2})$ such that for all $t \in [0,1]$ the pair $\gamma_{(\eta_{d-1},\eta_{d-2})}(t)$ is a Hodge-Riemann and Bogomolov pair. 
\end{enumerate}
Then the set of isomorphism classes of torsion-free sheaves of fixed rank $r$ and fixed Chern classes $c_i \in N^i(X)$  that are $\eta_{d-1}$-semistable with respect to some $\eta_{d-1}\in K'$ is bounded.
\end{theorem}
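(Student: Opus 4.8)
The plan is to reduce the assertion to a uniform upper bound on the Harder--Narasimhan maximal slope $\mu_{h,\max}$ (slopes taken throughout with respect to $h^{d-1}$) over the sheaves in question, and then to produce that bound by a wall-crossing argument along the paths supplied by hypothesis (2). For the reduction I would use the standard fact that a set of torsion-free sheaves on the projective manifold $X$ with fixed rank $r$ and fixed Chern classes $c_i$ --- hence fixed Hilbert polynomial --- is bounded as soon as there is a constant $C$ with $\mu_{h,\max}(E)\le C$ for every member $E$ (iterating a Le Potier--Simpson type estimate turns such a bound into a uniform Castelnuovo--Mumford regularity; see \cite{HL} and the work of Langer). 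For the polarization $h^{d-1}\in K'$ this is the classical boundedness of $h$-semistable sheaves, so it remains to treat an arbitrary $\eta_{d-1}\in K'$, which is where the hypotheses on $K$ enter.

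So fix $\eta_{d-1}\in K'$, choose $\eta_{d-2}$ and a path $\gamma=(\gamma_1,\gamma_2)\colon[0,1]\to K$ as in (2), and let $E$ be a member that is $\gamma_1(0)=\eta_{d-1}$-semistable. For the fixed sheaf $E$ the locus of polarizations with respect to which it is semistable is closed, and only finitely many walls meet the compact arc $\gamma_1([0,1])$; so either $E$ is $h$-semistable --- in which case $\mu_{h,\max}(E)=c_1(E)\cdot h^{d-1}/r$ is fixed --- or there is a first parameter $t_*\in(0,1]$ at which $E$ is $\gamma_1(t_*)$-semistable but not stable. Then $E$ has a saturated subsheaf $F\subsetneq E$ of positive rank (a maximal destabilizer just past $t_*$), with torsion-free quotient $G:=E/F$, and $F$, $G$ are $\gamma_1(t_*)$-semistable of the same $\gamma_1(t_*)$-slope as $E$. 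Writing $\Delta(E):=2rc_2(E)-(r-1)c_1(E)^2$ and $\xi:=\tfrac{c_1(F)}{\rk F}-\tfrac{c_1(G)}{\rk G}$, the equality of slopes gives $\int_X\xi\cdot\gamma_1(t_*)=0$, so by the Hodge--Riemann pair property of $(\gamma_1(t_*),\gamma_2(t_*))$ the class $\xi$ lies in the hyperplane of $N^1(X)_{\mathbb R}$ on which $\alpha\mapsto\int_X\alpha^2\cdot\gamma_2(t_*)$ is negative definite. Combining the elementary identity
\[
\frac{\Delta(E)}{r}=\frac{\Delta(F)}{\rk F}+\frac{\Delta(G)}{\rk G}-\frac{\rk F\cdot\rk G}{r}\,\xi^2\qquad\text{in }N^2(X),
\]
valid because $G$ is torsion-free, with the Bogomolov pair property of $(\gamma_1(t_*),\gamma_2(t_*))$ applied to the $\gamma_1(t_*)$-semistable sheaves $F$ and $G$, i.e.\ $\Delta(F)\cdot\gamma_2(t_*)\ge0$ and $\Delta(G)\cdot\gamma_2(t_*)\ge0$, I obtain after pairing with $\gamma_2(t_*)$
\[
0\ \le\ -\int_X\xi^2\cdot\gamma_2(t_*)\ \le\ \frac{r}{\rk F\cdot\rk G}\cdot\frac{\Delta(E)\cdot\gamma_2(t_*)}{r}\ \le\ \sup_{\eta''\in K''}\Delta(E)\cdot\eta'',
\]
the right-hand side depending only on $X$, $K$, $r$ and the $c_i$. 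Since, by compactness of $K$ and openness of the Hodge--Riemann condition, the norms $\alpha\mapsto(-\int_X\alpha^2\cdot\eta'')^{1/2}$ on these hyperplanes fit into a family uniformly equivalent to a single norm on $N^1(X)_{\mathbb R}$, this bounds $\xi$, and therefore also $\tfrac{c_1(F)}{\rk F}=\tfrac{c_1(E)}{r}+\tfrac{\rk G}{r}\xi$ and $\tfrac{c_1(G)}{\rk G}$, by a constant independent of $E$.

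I would then conclude by induction on the rank. One has $\mu_{h,\max}(E)\le\max\{\mu_{h,\max}(F),\mu_{h,\max}(G)\}$ for any subsheaf $F$ with quotient $G$; here $F$ and $G$ have rank $<r$ and are semistable with respect to $\gamma_1(t_*)\in K'$, so hypothesis (2) furnishes them with their own paths and the same analysis applies. The recursion terminates in $h$-semistable sheaves whose first Chern classes are controlled by the estimates above, for which $\mu_{h,\max}$ is in turn bounded; propagating the bounds upward and invoking the first paragraph gives the boundedness.

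The hard part will be the uniformity of this recursion. At each stage the estimate involves the number $\Delta(\,\cdot\,)\cdot\eta''$ of the subquotient produced, which is a fixed number for $E$ but not obviously uniformly bounded for the subsheaves and quotients that appear further down. Making this work requires feeding into the inductive hypothesis a uniform bound on the discriminants --- equivalently the second Chern classes --- of all the semistable pieces occurring, using the positivity of the discriminant of semistable sheaves so that a bound on a single pairing $\Delta(\cdot)\cdot\gamma_2(t_*)$ upgrades to a bound on $\Delta(\cdot)\cdot\eta''$ for every $\eta''\in K''$. The remaining points --- that semistability is closed along a path with locally finite walls, that the required maximal destabilizers exist, and that the Hodge--Riemann norms are uniformly comparable over the compact $K$ --- are routine, and it is hypothesis (1) together with path-connectedness of $K$ that anchors every path to the classically bounded pair $(h^{d-1},h^{d-2})$.
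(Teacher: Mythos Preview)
Your overall strategy---reduce to a uniform bound on $\mu_{h,\max}$ and obtain it by walking along $\gamma$, splitting at the first wall and controlling the first Chern classes of the pieces via the discriminant identity together with the Hodge--Riemann and Bogomolov properties---matches the paper's. The gap you flag as ``the hard part'' is real, however, and your proposed fix does not work: a bound on the single pairing $\Delta(F)\cdot\gamma_2(t_*)$, even combined with its non-negativity, does not upgrade to a bound on $\Delta(F)\cdot\eta''$ for all $\eta''\in K''$, since $\Delta(F)$ lives in $N^2(X)$ where one linear functional cannot control another. So your binary recursion on $F$ and $G$ separately stalls as soon as you need $\Delta(F)\cdot\gamma_2(\tilde t_*)$ at a later parameter $\tilde t_*$, where $G$ may no longer be semistable and hence cannot be dropped from the identity.

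The paper avoids this by never needing the discriminants of the pieces at all. Rather than recursing on $F$ and $G$ separately, it refines a \emph{single} filtration $0=E_0\subset\cdots\subset E_m=E$ along the \emph{same} path $\gamma$, inducting on the length $m$ and carrying only bounds on the $c_1(F_i)$. One moves to $t_1:=\sup\{t\ge t_0:\text{every factor }F_i\text{ is }\gamma_1(t)\text{-semistable}\}$; at $t_1$ some $F_{i_0}$ is properly semistable and is split into $F',F''$. The discriminant identity is then applied to $E$, not to $F_{i_0}$:
\[
\frac{\Delta(E)}{r}=\frac{\Delta(F')}{r'}+\frac{\Delta(F'')}{r''}+\sum_{i\neq i_0}\frac{\Delta(F_i)}{r_i}-\frac{1}{r}\sum_{i<j}r_ir_j\,\xi_{ij}^2-\frac{r'r''}{r_{i_0}}\xi^2.
\]
Pairing with $\gamma_2(t_1)$, every $\Delta$-term on the right is non-negative by the Bogomolov property (since \emph{all} factors are $\gamma_1(t_1)$-semistable by the choice of $t_1$) and can be discarded; the $\xi_{ij}$ are bounded by the inductive hypothesis on the $c_1(F_i)$; and the left side involves only the fixed $\Delta(E)$. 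This bounds $-\xi^2\cdot\gamma_2(t_1)$, hence $c_1(F')$ and $c_1(F'')$, completing the step. The recursion terminates at $m=r$, where the rank-one factors are semistable for every polarization, and the second Chern classes of the pieces never enter.
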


Theorem \ref{thm:MainBound} is actually more general in that it also holds for compact K\"ahler manifolds, and $(h^{d-1},h^{d-2})$ can be 
replaced by any given pair for which  a suitable boundedness is already known (see Definition \ref{def:boundednessclass}).

From \cite{MegyPavelToma} this boundedness result is enough to ensure that each $\eta_{d-1} \in K'$ defines a finite type moduli space of $\eta_{d-1}$-semistable sheaves, and $K'$ has a chamber structure separated by walls that determine when these moduli spaces change.   Using the results already stated, this boundedness applies in the following case:

\begin{corollary}[= Corollary \ref{cor:boundednessschurkahler}]
Let $K'$ be a path-connected  and compact set of K\"ahler classes on $X$ that includes a rational point, $\lambda$ is a partition of length $d-1$ and $e\ge d-1$. Then the set of isomorphism classes of torsion-free sheaves of given topological type that are semistable with respect to some element of 
$$  \{ s_{\lambda}(\alpha_1,\ldots,\alpha_e) \, \mid \, \alpha_1,\ldots,\alpha_e\in K'\}$$
is bounded.
\end{corollary}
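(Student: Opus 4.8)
The plan is to apply (the K\"ahler version of) \cref{thm:MainBound} to the set
$$K := \bigl\{\,\bigl(s_{\lambda}(\alpha_1,\dots,\alpha_e),\ s'_{\lambda}(\alpha_1,\dots,\alpha_e)\bigr)\ \bigm|\ \alpha_1,\dots,\alpha_e\in K'\,\bigr\},$$
whose first projection is precisely the set of slope classes appearing in the statement. Since $(K')^e$ is compact and path-connected and $s_\lambda,s'_\lambda$ are polynomials in their arguments, $K$ is the continuous image of a compact path-connected set, hence compact and path-connected. Moreover, because each $\alpha_i$ is a K\"ahler class, the positivity of Schur polynomials of K\"ahler classes (the same input that makes \cref{prop:ShurHRpair} meaningful) puts $s_\lambda(\alpha_1,\dots,\alpha_e)$ in the interior of the degree $d-1$ nef cone and $s'_\lambda(\alpha_1,\dots,\alpha_e)$ in the interior of the degree $d-2$ nef cone, so that $K\subset \mathcal K^{d-1}(X)\times\mathcal K^{d-2}(X)$ (and $K\subset\Amp^{d-1}(X)\times\Amp^{d-2}(X)$ in the projective case).

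To verify hypothesis (2) of \cref{thm:MainBound}, given $\eta_{d-1}=s_\lambda(\alpha_1,\dots,\alpha_e)$ in the first projection of $K$ I would take $\eta_{d-2}:=s'_\lambda(\alpha_1,\dots,\alpha_e)$; then $(\eta_{d-1},\eta_{d-2})\in K$ is a Hodge--Riemann pair by \cref{prop:ShurHRpair} and a Bogomolov pair by \cref{thm:HRimpliesBG}. For the required path I would use path-connectedness of $K'$ to choose continuous paths $\delta_i\colon[0,1]\to K'$ from $\alpha_i$ to the rational point $h$ of $K'$, and set
$$\gamma(t):=\bigl(s_\lambda(\delta_1(t),\dots,\delta_e(t)),\ s'_\lambda(\delta_1(t),\dots,\delta_e(t))\bigr).$$
This is a continuous path in $K$ from $(\eta_{d-1},\eta_{d-2})$ to $\bigl(s_\lambda(h,\dots,h),\,s'_\lambda(h,\dots,h)\bigr)$, and at each time $t$ the classes $\delta_i(t)$ are K\"ahler, so $\gamma(t)$ is again a Hodge--Riemann and Bogomolov pair by \cref{prop:ShurHRpair} and \cref{thm:HRimpliesBG}.

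It remains to supply the base point required by hypothesis (1). Since $s_\lambda$ and $s'_\lambda$ are homogeneous of degrees $d-1$ and $d-2$, evaluation at the constant tuple gives $s_\lambda(h,\dots,h)=c\,h^{d-1}$ and $s'_\lambda(h,\dots,h)=c'\,h^{d-2}$ with $c=s_\lambda(1,\dots,1)>0$ and $c'=s'_\lambda(1,\dots,1)>0$, and as $h$ is rational these are positive rational multiples of $h^{d-1}$ and $h^{d-2}$. Because $\mu$-semistability is unchanged under positive rescaling of the slope class, and $(h^{d-1},h^{d-2})$ satisfies the classical Hodge--Riemann relations and the classical Bogomolov inequality and, for $h$ rational, lies in a class for which boundedness of semistable sheaves of fixed rank and Chern classes is classically known, the pair $(c\,h^{d-1},c'\,h^{d-2})\in K$ is an admissible base point in the sense of \cref{def:boundednessclass}. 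With (1) and (2) in hand, (the K\"ahler version of) \cref{thm:MainBound} applies to $K$ and yields the boundedness of torsion-free sheaves of the given rank and Chern classes that are semistable with respect to some $s_\lambda(\alpha_1,\dots,\alpha_e)$ with $\alpha_i\in K'$, which is the assertion.

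I expect \cref{prop:ShurHRpair} and \cref{thm:HRimpliesBG} to do all the real work here; the only genuinely non-formal points are the positivity of $s_\lambda$ and $s'_\lambda$ on tuples of K\"ahler classes and the reconciliation of the rescaled base point $(c\,h^{d-1},c'\,h^{d-2})$ with the precise formulation of a boundedness class in \cref{def:boundednessclass}, and I would regard the latter bookkeeping as the main (if minor) obstacle. Everything else reduces to the elementary topology of continuous images of compact path-connected sets.
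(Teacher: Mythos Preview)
Your argument is correct and is essentially the paper's proof, only spelled out in far more detail: the paper defines $K'' = \{s'_\lambda(\alpha_1,\dots,\alpha_e)\mid \alpha_i\in \tilde K\}$, takes $K = K'\times K''$, and invokes \cref{prop:ShurHRpair} and \cref{thm:HRimpliesBG} to conclude via \cref{thm:MainBound}. The only cosmetic difference is that you use the smaller set $K = \{(s_\lambda(\alpha),s'_\lambda(\alpha))\}$ (the image of the diagonal map) rather than the full product $K'\times K''$; since the paths verifying hypothesis~(2) must in any case run through pairs of the form $(s_\lambda(\delta(t)),s'_\lambda(\delta(t)))$, both choices work identically, and your worry about reconciling the rescaled base point with \cref{def:boundednessclass} is handled exactly as you indicate (semistability and $\mu_{\max}$ are scale-invariant).
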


\subsection*{Acknowledgments: } We thank Ziquan Zhuang who kindly pointed out an error in the argument of our first version of \cref{prop:ss-projective-bundle}.  

MP was partially supported by the PNRR grant CF 44/14.11.2022 \textit{Cohomological Hall algebras of smooth surfaces and applications}, and by a grant of the Ministry of Research, Innovation and Digitalization, CNCS-UEFISCDI, project number PN-IV-P2-2.1-TE-2023-2040, within PNCDI IV.  JR is supported by Simons Foundation Award.  MT acknowledges financial support from  IRN ECO-Maths.

\section{Set-up and notation}\label{sec:setup}
We will be concerned in this paper with generalizing the Bogomolov inequality for torsion-free semistable sheaves in two related contexts, over smooth complex projective varieties and over compact K\"ahler manifolds. 

\subsection{Algebraic set-up} We consider here complex polarized smooth projective varieties $(X,h)$ of dimension $d$, where $h$ is an integral ample class on $X$. We denote by  $N^p(X)$ the numerical group of real codimension $p$ cycles on $X$, by  $\overline{\Eff}^p(X)$ the closed convex cone generated by effective $p$-codimensional cycles, by $\Nef^{d-p}(X)$ its dual cone in $N^{d-p}(X)$,   and by $\Amp^{p}(X)$ the interior of $\Nef^p(X)$.

 The cones $\Nef^p(X)$ are full-dimensional for $0\le p\le d$ (also for singular $X$ by \cite[Lemma 3.7]{FulgerLehmann2017cones}), so their interiors $\Amp^{p}(X)$ are nonempty. 
 In degree $1$, one recovers the cone of real ample divisor classes, $\Amp(X)=\Amp^1(X)$, \cite{Kleiman}, whereas in degree $d-1$ by \cite{bdpp} $\Nef^{d-1}(X)$ is the movable cone and $\Amp^{d-1}(X)$ is the cone of mobile curve classes on $X$, cf. \cite[Definition 11.4.16]{Lazbook2}. 
We will call the elements of $\Amp^{p}(X)$ {\em ample $p$-classes.} 
Note that there is a natural non-degenerate pairing $N^p(X)\times N^{d-p}(X)\to\R$.

\subsection{Analytic set-up} 

Here we will work with polarized compact K\"ahler manifolds $(X,[\omega])$, where $[\omega]\in \cK^1(X)$ is a fixed K\"ahler class associated to a K\"ahler form $\omega$ on $X$. Here $\cK^1(X)$ is the cone of K\"ahler classes on $X$. We now describe two types of positive cones to be considered inside the subspaces $H^{p,p}(X):=H^{p,p}_{dR}(X)_\R$ of de Rham cohomology classes represented by real closed $(p,p)$-forms (or alternatively by real closed $(p,p)$-currents) on $X$. We recall that for compact K\"ahler manifolds the canonical maps $H^{p,p}_{BC}(X)\to H^{p,p}_{dR}(X)\to H^{p,p}_A(X)$ between Bott-Chern, de Rham and Aeppli cohomology groups are isomorphisms. We will occasionally identify these cohomology groups in this canonical way without further comment. 

For differential forms we use the terminology of \cite{HarveyKnapp} for (weak, regular, and strong) positivity of forms, with strict positivity for a $(p,p)$-form meaning that it belongs to the interior of the corresponding cone of positive forms. 
In particular, a real $(p,p)$-form $\eta$ on $X$ is strictly weakly positive if and only if its restriction to any immersed $p$-dimensional submanifold is a volume form, cf. \cite[p.~46]{HarveyKnapp}. In this paper only weak and strong positivity for forms or currents will be used. The corresponding order relations will be indicated by $\le_w$ and $\le_s$.  Note that these positivity notions coincide in degrees $0,1,d-1$ and $d$.

We define $\Pseff^p(X)\subset H^{p,p}(X)$ to be the convex cone generated by classes of {\em strongly} positive closed $(p,p)$-currents on $X$
and $\Nef^p_A(X)$ to be the pull-back by $H^{p,p}_{dR}(X)\to H^{p,p}_A(X)$ of the cone
$$\{ a\in  H^{p,p}_A(X) \ \mid \ \forall \varepsilon>0 \ \exists \alpha_\varepsilon\in a \ \alpha_\varepsilon\ge_w-\varepsilon\omega^p\}\subset H^{p,p}_A(X). $$
(This might differ for $2\le p \le d-2$ from other work in which weak positivity of currents is considered.) 

Standard techniques in complex geometry can be used to prove the following (details are in the Appendix):

\begin{proposition}\label{prop:cones}
    For a compact K\"ahler manifold $X$ of dimension $d$ and $0\le p\le d$ the convex cones $\Pseff^p(X)$ and $\Nef^{d-p}_A(X)$ are closed, salient and dual to each other with respect to the usual intersection form.  
\end{proposition}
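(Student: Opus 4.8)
Write $q:=d-p$ and fix a K\"ahler form $\omega$ on $X$. The plan is to prove the duality $\Nef^q_A(X)=\Pseff^p(X)^\vee$ together with the closedness of $\Pseff^p(X)$, and to deduce the rest by the bipolar theorem and elementary cone geometry. Throughout I would use the two standard facts that strong positivity of $(p,p)$-currents and weak positivity of $(q,q)$-forms are dual notions in the sense of \cite{HarveyKnapp}, and that the smooth forms $\omega^p$ and $\omega^q$ are strictly strongly positive. The easy half of the duality is then immediate: if $b=[T]$ for a strong positive closed $(p,p)$-current $T$ and $a\in\Nef^q_A(X)$, then picking for each $\varepsilon>0$ a representative $\alpha_\varepsilon$ of the Aeppli class of $a$ with $\alpha_\varepsilon\ge_w-\varepsilon\omega^q$ gives $\langle b,a\rangle=\int_X T\wedge(\alpha_\varepsilon+\varepsilon\omega^q)-\varepsilon\int_X T\wedge\omega^q\ge-\varepsilon\int_X T\wedge\omega^q$, and letting $\varepsilon\to0$ yields $\langle b,a\rangle\ge0$; hence $\Nef^q_A(X)\subseteq\Pseff^p(X)^\vee$ and symmetrically $\Pseff^p(X)\subseteq\Nef^q_A(X)^\vee$.

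For closedness I would argue as follows. A strong positive closed $(p,p)$-current $T$ has mass controlled by $\int_X T\wedge\omega^q=\langle[T],[\omega]^q\rangle$, so a sequence of such currents with convergent cohomology classes has uniformly bounded mass; a weak-$*$ convergent subsequence has a limit which is again strong positive and closed, and pairing against closed smooth forms identifies its class with the limit class. Since sums and positive multiples of strong positive closed currents are again of this form, $\Pseff^p(X)$ is exactly the set of such classes, and the above shows it is closed. For $\Nef^q_A(X)$, harmonic representatives give a uniform $C^0$-bound for representatives of bounded classes in $H^{q,q}(X)$, and strict weak positivity of $\omega^q$ lets one absorb small perturbations: if $a_n\to a$ with $a_n\in\Nef^q_A(X)$, then for $n$ large one combines a small representative of $a-a_n$ with a representative of $a_n$ that is $\ge_w-\tfrac{\varepsilon}{2}\omega^q$ to obtain a representative of $a$ that is $\ge_w-\varepsilon\omega^q$.

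The crux is the remaining inclusion $\Pseff^p(X)^\vee\subseteq\Nef^q_A(X)$, which I would prove by a Hahn--Banach separation in the Fr\'echet space $\mathcal{A}^{q,q}(X)$ of smooth $(q,q)$-forms. Let $\ell\in H^{q,q}(X)$ pair non-negatively with all of $\Pseff^p(X)$, and suppose for contradiction that $\ell\notin\Nef^q_A(X)$, so that for some $\varepsilon_0>0$ no $\partial\bar\partial$-closed $(q,q)$-form in the Aeppli class of $\ell$ is $\ge_w-\varepsilon_0\omega^q$. Consider the affine subspace $Z=\alpha_0+\partial\mathcal{A}^{q-1,q}(X)+\bar\partial\mathcal{A}^{q,q-1}(X)$ of $\partial\bar\partial$-closed representatives of the Aeppli class of $\ell$, which is closed since Aeppli cohomology is finite-dimensional, and the closed convex set $W=-\varepsilon_0\omega^q+\{\psi:\psi\ge_w0\}$, which has non-empty interior because $\omega^q$ is strictly weakly positive. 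As $Z\cap W=\emptyset$, Hahn--Banach produces a non-zero continuous linear functional, i.e.\ a $(p,p)$-current $T$, separating $Z$ from $W$: boundedness below on the cone $\{\psi:\psi\ge_w0\}$ forces $T\ge_s0$, and boundedness above on the affine space $Z$ forces $\langle T,\partial\beta\rangle=\langle T,\bar\partial\gamma\rangle=0$ for all such $\beta,\gamma$, hence $dT=0$ and $[T]\in\Pseff^p(X)$. Evaluating the separation at $\alpha_0\in Z$ and at $-\varepsilon_0\omega^q\in W$, and using that $\int_X T\wedge\alpha_0=\langle[T],\ell\rangle$ by Bott--Chern/Aeppli duality, gives $0\le\langle[T],\ell\rangle\le-\varepsilon_0\int_X T\wedge\omega^q\le0$; thus $\int_X T\wedge\omega^q=0$, which by strict weak positivity of $\omega^q$ forces $T=0$, a contradiction. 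This establishes $\Nef^q_A(X)=\Pseff^p(X)^\vee$, and since $\Pseff^p(X)$ is a closed convex cone the bipolar theorem also gives $\Pseff^p(X)=\Nef^q_A(X)^\vee$.

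Finally, salience. Both cones are full-dimensional: $\Nef^q_A(X)$ contains a neighbourhood of $[\omega^q]$, since classes close to $[\omega^q]$ have $C^0$-close and hence strictly weakly positive representatives, and $[\omega^p]$ is an interior point of $\Pseff^p(X)$, since small smooth closed perturbations of the strictly strongly positive form $\omega^p$ are still strong positive forms. As each of the two cones is the dual of the other and both are full-dimensional, both are salient. The step I expect to be the main obstacle is the duality inclusion $\Pseff^p(X)^\vee\subseteq\Nef^q_A(X)$: although it is in principle a routine separation argument, it must be carried out at the level of $\partial\bar\partial$-closed forms and Aeppli classes rather than $d$-closed forms and de Rham classes, which requires knowing that the space of representatives is closed and, more importantly, checking that the separating current is genuinely $d$-closed so that the hypothesis on $\ell$ applies to its cohomology class.
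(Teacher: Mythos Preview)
Your proof is correct. The overall architecture matches the paper's Appendix: prove the easy inclusion of the duality, establish closedness of both cones, then use a Hahn--Banach separation for the hard inclusion $\Pseff^p(X)^\vee\subseteq\Nef^q_A(X)$, and deduce salience from full-dimensionality.

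The one genuine difference is in how the separation is set up. The paper, following Lamari, separates in the space of \emph{currents}: given $[\eta]$ in the dual of $\Pseff^{d-p}(X)$ it takes the compact convex set $K(\varepsilon)$ of normalized positive currents shifted by $\varepsilon\omega^{d-p}$ and the hyperplane $F=\{\eta=0\}\cap\Ker(d)$, and obtains a separating \emph{form} $\beta_\varepsilon$ which, after rescaling and invoking Bott--Chern/Aeppli duality, is the sought representative of $[\eta]$ satisfying $\ge_w -\varepsilon\omega^p$. Your argument is the dual one: you separate in the space of \emph{forms}, taking the closed affine space $Z$ of Aeppli representatives and the shifted weakly-positive cone $W$, and obtain a separating \emph{current} $T$ from which you derive a contradiction. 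The paper's version is constructive (it exhibits the representative), whereas yours is by contradiction; but both rest on exactly the same ingredients---closedness of $\Im\partial+\Im\bar\partial$ (equivalently the closed-range statement the paper proves as its first lemma) and the Bott--Chern/Aeppli duality---so neither is materially simpler or more general than the other. Your closedness argument for $\Nef^q_A(X)$ via harmonic representatives is also fine, whereas the paper simply cites \cite{ChioseRasdeaconuSuvaina2019}.
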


We denote by $\cK^p(X)$ the interior of $\Nef^p_A(X)$. This notation agrees with the  fact that the interior of $\Nef^1_A(X)$ is the cone of K\"ahler classes on X, cf. \cite[Section 1]{DemaillyPaun}, \cite[Proposition 2.7]{ChioseRasdeaconuSuvaina2019}.

\begin{remark}\label{rem:cones}
    When $X$ is moreover projective, we may look at the following real vector subspaces of $H^{p,p}(X)$:
    $$\cC^p(X)\subset \NS^p(X)_\R:=(\im(H^{2p}(X,\Z)\to H^{2p}(X,\R))\cap H^{p,p}(X))\otimes\R,$$
    where $\cC^p(X)$ denotes the subspace spanned by classes of $p$-codimensional algebraic cycles on $X$. The inclusions $\cC^p(X)\subset \NS^p(X)_\R$ are equalities for $p\in\{0,1,d-1,d\}$ and there are obvious projections $\pi_p:\cC^p(X)\to N^p(X)$. These projections are known to be isomorphisms for $p\in\{0,1,2,d-1,d\}$ and we will identify $\cC^p(X)$ with $N^p(X)$ in these cases.  For  $p=1$ and $p=d-1$ one has $\overline{\Eff}^p(X)=\Pseff^p(X)\cap\cC^p(X)$ and $\Nef^p(X)=\Nef^p_A(X)\cap \cC^p(X)$, \cite[Proposition 6.1]{Demailly-regularization}, \cite{Witt}. We note also that for these values of $p$ the elements of $\cK^p(X)$ are represented by strictly positive closed $(p,p)$-forms. In general one only has $\overline{\Eff}^p(X)\subset\pi_p(\Pseff^p(X)\cap\cC^p(X))$ and $\Nef^p(X)\supset\pi_p(\Nef^p_A(X)\cap\cC^p(X)).$ In fact these inclusions may be strict even for $p=2$ and $d=4$ as shown in \cite[Theorem B]{DELV11}. 
\end{remark}
\subsection{$\mathbb R$-twisted vector bundles}

Let $X$ be a smooth projective $d$-dimensional variety, $A$ be an ample vector bundle of rank $e$ on $X$, $\pi : \P(A) \to X$ be the natural projection, and $\xi = c_1(\cO_{\P(A)}(1))$ be the Chern class of the tautological line bundle on $\P(A)$.   

We use the notation of $\mathbb R$-twisted bundles (see \cite[Sec. 2.4]{RossToma1} for a longer account).  Let $h$ be an ample class on $X$.   For $t\in \mathbb R$, the notation $A\langle th\rangle$ is a formal object whose Chern classes are defined by the rule
$$c_p(A\langle th\rangle) = \sum_{k=0}^p \binom{e-k}{p-k} c_k(A) (th)^{p-k}.$$
This definition is made so that when $t\in \mathbb Z$ we have $c_p(A\langle th\rangle) = c_p(A\otimes \mathcal O(th))$.   As a space, the projectivization of $A\langle th\rangle$ is just $\mathbb P(A)$ but the tautological class $\xi$ is replaced by
$$\xi_t := \xi + t \pi^* h.$$
We say that $A\langle th\rangle$ is ample if $\xi_t$ is ample.

\subsection{Further Notation and Conventions}
Given a symmetric homogeneous polynomial $p$ in $e$ variables and a coherent sheaf $E$ on a complex manifold $X$ the class $p(E)\in H^{\deg p,\deg p}(X)$ is defined by writing $p$ as a polynomial in the elementary symmetric polynomials and taking the corresponding polynomial in the Chern classes of $E$.  When $E$ is a vector bundle one may equivalently define $p(E)=p(\alpha_1,\ldots,\alpha_e)$ where the $\alpha_i$ are the Chern roots of $E$. If $p$ is the complete homogeneous symmetric polynomial of degree $i$, one obtains the Segre class $s_i(E)$ (cf. the convention in \cite[p.~28]{Fulton-Pragacz}). The Segre and Chern classes are related by $s(E) = c(E^\vee)^{-1}$.

Given a hermitian metric $h$ on a complex vector bundle on $X$ we let $F_h$ denote the curvature of the Chern connection.   This gives rise to Chern forms $c_i(E,h)$ whose class $[c_i(E,h)]$ in Bott-Chern cohomology is independent of the choice of $h$ \cite[Chapter 2.4]{Bismut_superconnections}.

\section{Hodge-Riemann pairs}\label{sec:HRpairs}

\begin{definition}\label{def:HRproperty} 
Let $X$ be a complex smooth projective variety (resp.\ a compact K\"ahler manifold) of dimension $d$.  Let $h\in \Amp^1(X)$ and $\eta_{d-2}\in \Amp^{d-2}(X)$ (resp. $h\in \cK^1(X)$ and $\eta_{d-2}\in \cK^{d-2}(X)$).  

We say that $\eta_{d-2}$ has the \emph{Hodge-Riemann property} with respect to $h$ if  
for all $\alpha$ in $N^1(X)$ (resp.\ $H^{1,1}(X)$) we have
\begin{equation}\int_X \alpha \cdot \eta_{d-2} \cdot h =0 \Rightarrow \int_X \alpha^2 \cdot \eta_{d-2}\le 0\label{eq:HR}\end{equation}
with equality if and only if $\alpha=0$.
\end{definition}

Equivalently $\eta_{d-2}$ has the Hodge-Riemann property if and only if the intersection matrix
$$ Q_{\eta_{d-2}}(\alpha,\alpha') := \int_X \alpha \cdot \eta_{d-2} \cdot \alpha' \text{ for } \alpha,\alpha \text{ in } N^1(X)\text{ (resp.\ } H^{1,1}(X)\text{)}$$
has signature $(+,-,\cdots,-)$.  One can check that if $\eta_{d-2}$ has the Hodge-Riemann property with respect to some $h$ in $\Amp^1(X)$ (resp.\ in $\cK^1(X)$) then it has the Hodge-Riemann property with respect to any $h$ in $\Amp^1(X)$ (resp.\ in $\cK^1(X)$).

Note that if $\eta_{d-2}$ has the Hodge-Riemann property then the map $N^1(X)\to N^{d-1}(X)$  (resp.\  $H^{1,1}(X)\to H^{d-1,d-1}(X))$ given by
$$\alpha\mapsto \alpha\cdot \eta_{d-2}$$ is an isomorphism.  When this occurs, given $\gamma$ in $N^{d-1}(X)$ (resp.\ in  $H^{d-1,d-1}(X)$) we define $\gamma/\eta_{d-2}$ in $N^1(X)$ (resp.\ in  $H^{1,1}(X)$) by requiring
$$(\gamma/\eta_{d-2}) \cdot \eta_{d-2} = \gamma.$$

The classical Hodge-Riemann bilinear relations imply that if $[\omega]$ is a K\"ahler class on $X$ then $[\omega]^{d-2}$ has the Hodge-Riemann property (so this implies $h^{d-2}$ has the Hodge-Riemann property if $h\in \Amp^1(X)$).  In \cite{DinhNguyen06} it is shown that products $[\omega_1] \cdots[\omega_{d-2}]$ of K\"ahler classes $[\omega_i]$ also have the Hodge-Riemann property. 

In \cite{RossToma1} it is shown that Schur classes of ample vector bundles give rise to classes with the Hodge-Riemann property.  Precisely, if $A$ is an ample vector bundle of sufficiently high rank, and $\lambda$ is a partition of length $d-2$ then $s_{\lambda}(A)$ has the Hodge-Riemann property.  In \cite{RossToma3} it is shown that if $\lambda$ is a partition of length $d-2$ and $\alpha_1,\ldots,\alpha_e$ are (possibly irrational) ample classes then $s_{\lambda}(\alpha_1,\ldots,\alpha_e)$ has the Hodge-Riemann property.

\begin{definition}[Hodge-Riemann pairs of cohomology classes]\label{def:algebraicHRpair}

Let $X$ be a complex smooth projective variety (resp.\ a compact K\"ahler manifold) of dimension $d$.  Let $\eta_{d-2}\in \Amp^{d-2}(X)$ (resp.  $\eta_{d-2}\in \cK^{d-2}(X)$).  Suppose also $\eta_{d-1}\in N^{d-1}(X)$ (resp.\ $\eta_{d-1}\in H^{d-1,d-1}(X)$).

We say $(\eta_{d-1},\eta_{d-2})$ is a \textit{Hodge-Riemann pair} if the following holds:

\begin{enumerate}
\item $\eta_{d-2}$ has the Hodge-Riemann property with respect to some $h$ in $\Amp^1(X)$ (resp.\ in $\cK^1(X)$).
\item $\int_X h \cdot \eta_{d-1}>0$
\item We have
$$\int_X \eta_{d-2}\cdot (\eta_{d-1}/\eta_{d-2})^2>0.$$
\end{enumerate}
\end{definition}

\begin{definition}
For any $\eta_{d-2}\in \Amp^{d-2}(X)$ (resp.  $\eta_{d-2}\in \cK^{d-2}(X)$) having the Hodge-Riemann property with respect to some $h$ in $\Amp^1(X)$ (resp.\ in $\cK^1(X)$), we write 
$\Pos_{\eta_{d-2}}(X)$ for the set of elements $\beta\in N^{1}(X)$ (resp. $\beta\in H^{1,1}(X)$) such that $\int_X \beta \cdot \eta_{d-2} \cdot h >0$ and $ \int_X \beta^2 \cdot \eta_{d-2}>0$.
\end{definition}

Clearly $\Pos_{\eta_{d-2}}(X)$ is a quadratic cone in $N^{1}(X)$ (resp in $ H^{1,1}(X)$). Then with this notation 
$$(\eta_{d-1},\eta_{d-2})\text{ is a Hodge-Riemann pair} \Longleftrightarrow \eta_{d-1}\in\eta_{d-2}\Pos_{\eta_{d-2}}(X).$$

In most of our applications, $\eta_{d-1}$ will itself have some positivity, in which case the above definition agrees with that given in the introduction:

\begin{lemma}\label{lem:equivalentHRpairforpositive}
With notation as in Definition \ref{def:algebraicHRpair} suppose in addition that there is an $h\in \Amp^1(X)$ (resp.\ in $\cK^1(X)$) such that  $$\int_X h\cdot \eta_{d-1}>0.$$
Then $(\eta_{d-1},\eta_{d-2})$ is a Hodge-Riemann pair if and only if for all $\alpha$ in $N^1(X)$ (resp.\ $H^{1,1}(X)$)
$$\int_X \alpha \cdot \eta_{d-1} =0 \Rightarrow \int_X \alpha^2 \cdot \eta_{d-2}\le 0$$
with equality if and only if $\alpha=0$.
\end{lemma}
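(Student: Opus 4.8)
The plan is to show that condition (3) in \cref{def:algebraicHRpair} is, under the extra positivity hypothesis $\int_X h\cdot\eta_{d-1}>0$, equivalent to the implication stated in the lemma (conditions (1) and (2) being essentially given, with (2) following from the hypothesis). The key object is the nondegenerate symmetric bilinear form $Q_{\eta_{d-2}}$ on $V:=N^1(X)$ (resp.\ $H^{1,1}(X)$), which by assumption (1) has signature $(+,-,\ldots,-)$. Write $\beta := \eta_{d-1}/\eta_{d-2}\in V$, which is well-defined because $\eta_{d-2}$ has the Hodge-Riemann property; then for every $\alpha\in V$ one has $\int_X\alpha\cdot\eta_{d-1} = \int_X\alpha\cdot\eta_{d-2}\cdot\beta = Q_{\eta_{d-2}}(\alpha,\beta)$, so the hyperplane $\{\alpha : \int_X\alpha\cdot\eta_{d-1}=0\}$ is exactly the $Q_{\eta_{d-2}}$-orthogonal complement $\beta^{\perp}$ of $\beta$, and condition (3) says precisely $Q_{\eta_{d-2}}(\beta,\beta)>0$.

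First I would record the elementary linear-algebra fact: if $Q$ is a nondegenerate symmetric form of signature $(+,-,\ldots,-)$ on a finite-dimensional real vector space $V$ and $\beta\in V$, then $Q(\beta,\beta)>0$ if and only if the restriction $Q|_{\beta^{\perp}}$ is negative semidefinite with kernel exactly $\R\beta\cap\beta^{\perp}=\{0\}$, i.e.\ negative definite. Indeed, if $Q(\beta,\beta)>0$ then $V=\R\beta\oplus\beta^{\perp}$ orthogonally, and since $Q$ has exactly one positive direction, $Q|_{\beta^{\perp}}$ must be negative definite; conversely if $Q(\beta,\beta)\le 0$ one checks directly (e.g.\ in a diagonalizing basis) that $\beta^{\perp}$ contains a vector on which $Q$ is positive, or contains $\beta$ itself in the degenerate case $Q(\beta,\beta)=0$, so $Q|_{\beta^\perp}$ is not negative definite. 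Translating back: $Q|_{\beta^{\perp}}$ negative definite is exactly the statement ``$\int_X\alpha\cdot\eta_{d-1}=0 \Rightarrow \int_X\alpha^2\cdot\eta_{d-2}\le 0$ with equality iff $\alpha=0$,'' since $\int_X\alpha^2\cdot\eta_{d-2}=Q_{\eta_{d-2}}(\alpha,\alpha)$.

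Then the argument is just a matter of bookkeeping. For the forward direction, assume $(\eta_{d-1},\eta_{d-2})$ is a Hodge-Riemann pair: (1) gives the signature, and (3) gives $Q_{\eta_{d-2}}(\beta,\beta)>0$, so by the linear-algebra fact $Q_{\eta_{d-2}}|_{\beta^{\perp}}$ is negative definite, which is the desired implication. For the converse, assume the stated implication; this says $Q_{\eta_{d-2}}|_{\beta^{\perp}}$ is negative definite, provided we already know that $Q_{\eta_{d-2}}$ is nondegenerate with the right signature so that $\beta$ and $\beta^{\perp}$ make sense — but this is where I must be a little careful, since a priori the converse hypothesis does not include (1). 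Here I would invoke the discussion immediately following \cref{def:HRproperty}: the Hodge-Riemann property of $\eta_{d-2}$ with respect to one $h$ is equivalent to $Q_{\eta_{d-2}}$ having signature $(+,-,\ldots,-)$, and is independent of the choice of $h$; in the present setup $\eta_{d-2}\in\Amp^{d-2}(X)$ (resp.\ $\cK^{d-2}(X)$) is assumed throughout \cref{def:algebraicHRpair}, and one either takes (1) as part of the ambient hypotheses or notes that the stated implication forces $Q_{\eta_{d-2}}|_{\beta^\perp}$ to be negative definite on a hyperplane, hence $Q_{\eta_{d-2}}$ has at most one nonnegative eigenvalue; combined with nondegeneracy of the intersection pairing and positivity of $\eta_{d-2}$ (which gives $\int_X h^2\cdot\eta_{d-2}>0$, hence at least one positive direction), the signature is exactly $(+,-,\ldots,-)$. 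With (1) in hand, $\beta=\eta_{d-1}/\eta_{d-2}$ is defined, the implication gives $Q_{\eta_{d-2}}|_{\beta^{\perp}}$ negative definite, and the linear-algebra fact yields $Q_{\eta_{d-2}}(\beta,\beta)>0$, i.e.\ condition (3); condition (2) follows from the extra hypothesis $\int_X h\cdot\eta_{d-1}>0$.

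The main obstacle, as indicated, is purely a matter of hypothesis-tracking in the converse direction: making sure that the Hodge-Riemann property of $\eta_{d-2}$ (condition (1)) is available so that the quotient $\eta_{d-1}/\eta_{d-2}$ is meaningful and the signature argument goes through. If the paper intends (1) to be a standing assumption on $\eta_{d-2}$ in the converse as well, the proof is immediate from the linear-algebra fact; if not, one must spend a sentence deriving the signature from the negative-definiteness on the hyperplane $\beta^{\perp}$ together with the existence of one positive direction coming from $h$, as above. The rest is the standard ``positive vector versus its orthogonal complement'' dichotomy for a form of Lorentzian signature.
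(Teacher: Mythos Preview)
Your proposal is correct and follows essentially the same approach as the paper. Both arguments rest on the same Lorentzian linear-algebra dichotomy for $Q_{\eta_{d-2}}$: in the forward direction the paper writes it out as the reverse Cauchy--Schwarz inequality $Q(\alpha,\alpha)Q(\beta,\beta)\le Q(\alpha,\beta)^2$, while in the converse the paper first recovers the signature from negative-definiteness on the hyperplane plus $Q(h,h)>0$ (exactly as you do) and then applies the implication to the specific element $\beta - th$ to extract $Q(\beta,\beta)>0$; your packaging of both directions as a single abstract ``$Q(\beta,\beta)>0 \Leftrightarrow Q|_{\beta^\perp}$ negative definite'' statement is just a cleaner rewrapping of the same content.
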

\begin{proof}
Suppose first that the conclusion of this statement holds. The space of those $\alpha$ such that $\int_X \alpha\cdot \eta_{d-1}=0$ has codimension $1$.  Thus $Q_{\eta_{d-2}}$ has one strictly positive eigenvalue given by $h$ and all other eigenvalues strictly negative.  Thus $\eta_{d-2}$ has the Hodge-Riemann property with respect to $h$.

Set $\beta = \eta_{d-1}/\eta_{d-2}$.  Then there is a unique $t\in \mathbb R$ so that $(\beta - th)\eta_{d-2}\beta=0$.  By the Hodge-Riemann property $\int_X (\beta-th)^2\cdot \eta_{d-2}\le 0$ which rearranges to give $\int_X \beta^2 \cdot \eta_{d-2} >0$.

In the other direction suppose that $(\eta_{d-1},\eta_{d-2})$ is a Hodge-Riemann pair and $\int_X \alpha\cdot \eta_{d-1}=0$.  Again set $\beta = \eta_{d-1}/\eta_{d-2}$.  Then
$$\int_X \eta_{d-2}\cdot \alpha^2 \int_X \eta_{d-2} \cdot \beta^2 \le \left(\int_X \eta_{d-2}\cdot \alpha\cdot \beta \right)^2= \left(\int_X \eta_{d-1}\cdot \alpha\right)^2 =0$$
with equality if and only if $\alpha$ is proportional to $\beta$.  Since by hypothesis $\int_X \eta_{d-2}\cdot \beta^2>0$ this implies $\int_X\alpha^2 \cdot \eta_{d-2}\le 0$ with equality if and only if $\alpha=0$ which completes the proof.
\end{proof}

\begin{lemma}\label{lem:HRopen}
    The set of Hodge-Riemann pairs is open in $N^{d-1}(X)\times\Amp^{d-2}(X)$ (resp. in $H^{d-1,d-1}(X)\times\cK^{d-2}(X)$).
\end{lemma}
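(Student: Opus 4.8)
The plan is to show that each of the three defining conditions of a Hodge-Riemann pair is an open condition on $(\eta_{d-1},\eta_{d-2})\in N^{d-1}(X)\times\Amp^{d-2}(X)$ (resp. in the K\"ahler setting), and then conclude that their conjunction is open. Condition (1), that $\eta_{d-2}$ has the Hodge-Riemann property with respect to some ample $h$, depends only on $\eta_{d-2}$: it says that the symmetric bilinear form $Q_{\eta_{d-2}}$ on the finite-dimensional space $N^1(X)$ (resp. $H^{1,1}(X)$) has signature $(+,-,\dots,-)$, equivalently has exactly one nonnegative eigenvalue and that eigenvalue is strictly positive, i.e. $Q_{\eta_{d-2}}$ is nondegenerate of signature $(1,n-1)$. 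Since the entries of $Q_{\eta_{d-2}}$ depend continuously (in fact linearly) on $\eta_{d-2}$, and since "nondegenerate with exactly one positive eigenvalue" is an open condition on symmetric matrices (the eigenvalues vary continuously and none can cross $0$ under a small perturbation without degeneracy), the set of $\eta_{d-2}$ satisfying (1) is open. Fix such an $\eta_{d-2}$ and an auxiliary $h$; by the remark after \cref{def:HRproperty}, $h$ works for all nearby $\eta_{d-2}$ as well (one can take $h$ to be the same fixed ample class throughout a small neighborhood).

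Next I would handle conditions (2) and (3) together. The map $\eta_{d-2}\mapsto(\text{the isomorphism }\alpha\mapsto\alpha\cdot\eta_{d-2})$ is continuous into $\mathrm{GL}$, so its inverse, hence the operation $\gamma\mapsto\gamma/\eta_{d-2}$, is jointly continuous in $(\gamma,\eta_{d-2})$ on the open set where (1) holds. Therefore $\beta:=\eta_{d-1}/\eta_{d-2}$ depends continuously on $(\eta_{d-1},\eta_{d-2})$, and so do the two quantities $\int_X h\cdot\eta_{d-1}$ and $\int_X\eta_{d-2}\cdot(\eta_{d-1}/\eta_{d-2})^2$. Both conditions (2) and (3) ask that a continuous real-valued function of $(\eta_{d-1},\eta_{d-2})$ be strictly positive, which is manifestly an open condition. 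Intersecting the three open sets, together with the open set $\Amp^{d-2}(X)$ in which $\eta_{d-2}$ is required to live, gives an open set; this is exactly the set of Hodge-Riemann pairs by \cref{def:algebraicHRpair}.

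The only point requiring genuine care is the joint continuity of $\gamma/\eta_{d-2}$ in both arguments, since $\eta_{d-2}$ varies. Concretely: choose a basis of $N^1(X)$ (resp. $H^{1,1}(X)$) and a dual basis of $N^{d-1}(X)$ (resp. $H^{d-1,d-1}(X)$); then $\alpha\mapsto\alpha\cdot\eta_{d-2}$ is represented by a matrix $M(\eta_{d-2})$ whose entries are linear in $\eta_{d-2}$, and condition (1) guarantees $M(\eta_{d-2})$ is invertible on that neighborhood, so $M(\eta_{d-2})^{-1}$ — and hence $\gamma/\eta_{d-2}=M(\eta_{d-2})^{-1}\gamma$ — depends continuously (indeed smoothly) on $(\gamma,\eta_{d-2})$ by Cramer's rule. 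This is routine linear algebra, so there is really no serious obstacle; the lemma is a soft consequence of continuity once one observes that every defining inequality is strict and every object in sight is built continuously from $(\eta_{d-1},\eta_{d-2})$. If desired, one can package the whole argument by noting that $\Pos_{\eta_{d-2}}(X)$ varies "continuously" with $\eta_{d-2}$ and that $(\eta_{d-1},\eta_{d-2})$ is a Hodge-Riemann pair iff $\eta_{d-1}/\eta_{d-2}\in\Pos_{\eta_{d-2}}(X)$, but the direct verification above is cleaner to write.
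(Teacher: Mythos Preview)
Your proposal is correct and follows essentially the same approach as the paper: the paper's one-sentence proof observes that the set $\HR^{d-2}(X)$ of classes with the Hodge-Riemann property is open and that over this base the Hodge-Riemann pairs form a subbundle of open cones (namely $\eta_{d-2}\Pos_{\eta_{d-2}}(X)$), which is exactly the ``packaging'' you mention in your final paragraph. Your version is simply a more explicit unpacking of the same continuity argument---verifying openness of each defining condition directly rather than invoking the cone-bundle picture.
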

\begin{proof}
    The set $\HR^{d-2}(X)$ of Hodge-Riemann classes is open in $\Amp^{d-2}(X)$ (resp. in $\cK^{d-2}(X)$) and the set of Hodge-Riemann pairs of $X$ may be seen as a subbundle in open cones over $\HR^{d-2}(X)$ inside the trivial real vector bundle $N^{d-1}(X)\times \HR^{d-2}(X)$  (resp. in $H^{d-1,d-1}(X)\times \HR(X)$) over $\HR(X)$.
\end{proof}

\begin{remark}\label{rem:DELV}
Suppose that  $X$ is a projective manifold and take $$(\eta_{d-1},\eta_{d-2})\in(\cK^{d-1}(X)\cap\cC^{d-1}(X))\times (\cK^{d-2}(X)\cap\cC^{d-2}(X)).$$ If the pair $(\eta_{d-1},\eta_{d-2})$ has the Hodge-Riemann property, then the corresponding pair of numerical classes in $\Amp^{d-1}(X)\times\Amp^{d-2}(X) $ has the Hodge-Riemann property too. But the converse does not hold in general, as the following example shows.
\end{remark}
\begin{example}\label{ex:delv}
Let $X=A\times A$ be a self-product of a very general principally polarized abelian surface $(A,\theta)$. In \cite{DELV11} the authors describe the numerical cohomology ring $N^*(X)$ of $X$ and various positive cones inside $N^*(X)$. We use their notation 
and consider the following classes:  $\theta_1=\pr_1^*(\theta)$, $\theta_2=\pr_2^*(\theta)$, $\lambda=c_1(\cP)$, where $\cP$ is the Poincar\'e bundle on $A\times A$. These form a basis of  $N^1(X)$ and generate the numerical cohomology ring $N^*(X)$. 
Putting 
$A=U/\Lambda$, $V=U\times U$, $X=V/(\Lambda\times\Lambda)$ one may choose coordinates $(z_1,z_2,z_3,z_4)$ on $V$ such that this basis gets represented as $\theta_1=i\d z_1\wedge \d\bar z_1 +i\d z_2\wedge \d\bar z_2$, $\theta_2=i\d z_3\wedge \d\bar z_3 +i\d z_4\wedge \d\bar z_4$, $\lambda=i\d z_1\wedge \d\bar z_3 +i\d z_2\wedge \d\bar z_4$. Consider now the class $\eta=\theta_1\theta_2$. The computations in the proof of \cite[Proposition 4.4]{DELV11} show that $\eta$ restricts positively to any $2$-dimensional complex subspace of $V$, so $\eta$ belongs to $\cK^2(X)$, hence also to $\Amp^2(X)$. The matrix of the intersection form that $\eta$ defines on $N^1(X)$ with respect to the above basis is
$$\left(\begin{array}{ccc} 0 & 4 &0\\
4 & 0 &0\\
0 & 0&-4\\
\end{array}\right),$$
showing that $\eta$ has the Hodge-Riemann property with respect to $N^1(X)$. However 
$\eta\wedge(i\d z_1\wedge \d\bar z_2 +i\d z_2\wedge \d\bar z_1)=0$ and thus the intersection form defined by $\eta$ on $H^{1,1}(X)$ has zero eigenvalues. 
Take now $h=\theta_1+\theta_2$. 
Then we get $h\in\Amp^1(X)$, $\eta h=\frac{1}{3}h^3\in\Amp^3(X)$ and $(\eta h,\eta)$ is a Hodge-Riemann pair (with respect to $N^1(X)$).
\end{example}

\begin{proposition}\label{prop:ShurAmpleHRpair}
Let $X$ be a complex projective manifold of dimension $d\ge 2$, and let $\lambda$ be a partition of $d-1$. Also let $h\in \Amp^1(X)$ and $t\in \mathbb R_{\ge 0}$.  

If $A\langle th\rangle$ is an ample vector bundle of rank $e \ge d-1$, then 
$$(s_\lambda(A\langle th\rangle),s_\lambda'(A\langle th\rangle))$$ is a Hodge-Riemann pair.
\end{proposition}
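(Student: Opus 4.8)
The plan is to verify the three conditions of \cref{def:algebraicHRpair} through the equivalent reformulation provided by \cref{lem:equivalentHRpairforpositive}. Ampleness of $A\langle th\rangle$ gives, by the positivity of Schur polynomials of ample vector bundles, that $s_\lambda(A\langle th\rangle)$ is numerically positive, so $\int_X h'\cdot s_\lambda(A\langle th\rangle)>0$ for every $h'\in\Amp^1(X)$. Next I would record the expansion
\[
s_\lambda'(A\langle th\rangle)=\sum_{\mu}a_\mu\,s_\mu(A\langle th\rangle),
\]
summed over the partitions $\mu$ of $d-2$ obtained from $\lambda$ by deleting one corner box, with explicit positive coefficients $a_\mu$ (namely $a_\mu=\lambda_j+e-j$ if the box is deleted from row $j$): this follows from the Jacobi-Trudi formula, the relation $A\langle(t+s)h\rangle=(A\langle th\rangle)\langle sh\rangle$, and the identity $h_m'=(e+m-1)h_{m-1}$ for the complete symmetric polynomials in the $e$ Chern roots, positivity of the $a_\mu$ using $e\ge d-1\ge\ell(\lambda)$. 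Each $s_\mu(A\langle th\rangle)$ again lies in $\Amp^{d-2}(X)$, hence so does $s_\lambda'(A\langle th\rangle)$, and by \cref{lem:equivalentHRpairforpositive} it suffices to prove: for all $\alpha\in N^1(X)$,
\[
\int_X\alpha\cdot s_\lambda(A\langle th\rangle)=0\ \Longrightarrow\ \int_X\alpha^2\cdot s_\lambda'(A\langle th\rangle)\le0,
\]
with equality only when $\alpha=0$.

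To prove this implication I would pass to a flag bundle $q\colon Y\to X$ of $A$ carrying twisted tautological classes $\ell_k^{t}:=\ell_k+t\,q^*h$, and use the standard realization $s_\nu(A\langle th\rangle)=q_*\!\bigl(\prod_k(\ell_k^{t})^{\nu_k+e-k}\bigr)$ of Schur classes as push-forwards from flag bundles, valid for any partition $\nu$ of length at most $e$. For $\nu=\lambda$ (of weight $d-1$) this exhibits $s_\lambda(A\langle th\rangle)$ as the push-forward of a product $\Xi:=\prod_k(\ell_k^{t})^{\lambda_k+e-k}$ of exactly $\dim Y-1$ of the $\ell_k^{t}$; for $\nu=\mu$ a partition of $d-2$ got from $\lambda$ by removing the corner box in row $j$, the product $\Theta_\mu:=\prod_k(\ell_k^{t})^{\mu_k+e-k}$ is just $\Xi$ with the single factor $\ell_j^{t}$ deleted, i.e.\ $\Xi=\ell_j^{t}\cdot\Theta_\mu$, and $q_*(\Theta_\mu)=s_\mu(A\langle th\rangle)$. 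Combining with the first paragraph, $s_\lambda'(A\langle th\rangle)=\sum_\mu a_\mu\,q_*(\Theta_\mu)$ with $a_\mu>0$. Now, \emph{assuming all the $\ell_k^{t}$ are ample on $Y$}, the argument concludes: if $\int_X\alpha\cdot s_\lambda(A\langle th\rangle)=0$ then, for each $\mu$, the projection formula gives $\int_Y q^*\alpha\cdot\ell_j^{t}\cdot\Theta_\mu=\int_Y q^*\alpha\cdot\Xi=\int_X\alpha\cdot s_\lambda(A\langle th\rangle)=0$; since $\Theta_\mu$ is a product of $\dim Y-2$ ample (hence K\"ahler) classes, the mixed Hodge-Riemann bilinear relations on $Y$ \cite{DinhNguyen06} say that $\Theta_\mu$ has the Hodge-Riemann property with $\ell_j^{t}$ as polarization, so $\int_Y(q^*\alpha)^2\cdot\Theta_\mu\le0$, with equality iff $q^*\alpha=0$, i.e.\ iff $\alpha=0$ ($q^*$ being injective); pushing forward, $\int_X\alpha^2\cdot s_\mu(A\langle th\rangle)\le0$ with equality iff $\alpha=0$. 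Summing over $\mu$ with the positive weights $a_\mu$ gives $\int_X\alpha^2\cdot s_\lambda'(A\langle th\rangle)\le0$, and any single summand forces $\alpha=0$ in the equality case; with \cref{lem:equivalentHRpairforpositive}, this is the Proposition.

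The main obstacle is the italicized hypothesis. On the \emph{complete} flag bundle of $A$ the $\ell_k^{t}$ can never all be ample: their sum $\sum_k\ell_k^{t}=q^*c_1(A\langle th\rangle)$ is pulled back from $X$, hence restricts to zero on the fibres, whereas a sum of ample (K\"ahler) classes is ample. So this step must be carried out on the suitably twisted (partial, or Grassmann-type) flag model with relatively positive tautological classes as constructed in \cite{RossToma1,RossToma3}, and this is exactly where the hypotheses $e\ge d-1$ and ampleness of $A\langle th\rangle$ (rather than of $A$ itself) are genuinely used; I expect this to be the crux. An alternative route, avoiding the need to positivize all the $\ell_k^{t}$ at once, observes that --- $s_\lambda(A\langle th\rangle)$ being positive --- the property that $(s_\lambda(A\langle th\rangle),\eta)$ be a Hodge-Riemann pair is stable under strictly positive linear combinations of $\eta$, immediately from \cref{lem:equivalentHRpairforpositive}; so by the expansion of the first paragraph it is enough to show each $(s_\lambda(A\langle th\rangle),s_\mu(A\langle th\rangle))$ is a Hodge-Riemann pair, which combines the known Hodge-Riemann property of Schur classes of ample vector bundles from \cite{RossToma1,RossToma3} with the push-forward argument above. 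Finally, when $\lambda$ is a single row or a single column, $s_\lambda'(A\langle th\rangle)$ is a positive multiple of a single lower Segre (resp.\ Chern) class, and then the one-step form of the argument on $\P(A\langle th\rangle)$ already suffices.
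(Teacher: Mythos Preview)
Your approach is genuinely different from the paper's, and considerably more elaborate. The paper does not pass to a flag bundle of $A$ at all. Instead it uses a one-line reduction: set $\hat X=X\times\mathbb P^1$, $\tau$ the hyperplane class on $\mathbb P^1$, $\hat h=h+\tau$, and $\hat A=A'\boxtimes\mathcal O_{\mathbb P^1}(1)$ with $A'=A\langle th\rangle$. Then $\hat A$ is ample on the $(d{+}1)$-fold $\hat X$ and, by the very definition of the derived polynomial,
\[
s_\lambda(\hat A)=s_\lambda(A')+s_\lambda'(A')\,\tau.
\]
Now the Hodge--Riemann property of $s_\lambda(\hat A)\in H^{d-1,d-1}(\hat X)$ is already known from \cite{RossToma1}. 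Given $\alpha\in N^1(X)$ with $\int_X\alpha\cdot s_\lambda(A')=0$, one sets $\hat\alpha=\alpha-c\,\tau$ for the unique constant $c$ making $\int_{\hat X}\hat\alpha\cdot s_\lambda(\hat A)\cdot\hat h=0$; expanding $\int_{\hat X}\hat\alpha^{\,2}\cdot s_\lambda(\hat A)\le 0$ with $\tau^2=0$ immediately gives $\int_X\alpha^2\cdot s_\lambda'(A')\le 0$, with equality iff $\hat\alpha=0$ iff $\alpha=0$. That is the whole proof.

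Your route, by contrast, attempts to re-derive everything on a flag bundle using Dinh--Nguy\^en and the Schur-positive expansion of $s_\lambda'$. As you yourself flag, the crucial step ``all $\ell_k^t$ are ample on $Y$'' fails on the complete flag bundle, and you do not actually carry out either of your proposed repairs (the cone/partial-flag construction of \cite{RossToma1}, or the reduction to individual pairs $(s_\lambda,s_\mu)$). The second repair in particular is not self-contained: knowing that $s_\mu(A')$ has the Hodge--Riemann property does \emph{not} by itself show that $(s_\lambda(A'),s_\mu(A'))$ is a Hodge--Riemann pair, since the primitivity hypothesis changes from $\int_X\alpha\cdot s_\mu(A')\cdot h=0$ to $\int_X\alpha\cdot s_\lambda(A')=0$; you still need the push-forward identity $\Xi=\ell_j^t\cdot\Theta_\mu$ together with ampleness of the factors, i.e.\ exactly the obstacle you identified. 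So as written your argument has a genuine gap.

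The comparison is instructive: the $\mathbb P^1$ trick exploits precisely the defining identity $p(x_1+t,\dots,x_e+t)=\sum_i t^ip^{(i)}$, so that the pair $(s_\lambda,s_\lambda')$ on $X$ is the ``$\tau$-expansion'' of the single class $s_\lambda(\hat A)$ on $\hat X$, and the statement reduces to the already-established Hodge--Riemann property of a single Schur class one dimension up. Your flag-bundle strategy instead unpacks the proof of that Hodge--Riemann property and tries to thread the two classes $s_\lambda$ and $s_\lambda'$ through it separately, which is why the ampleness bookkeeping becomes delicate. If you want to salvage your approach, the cleanest move is simply to adopt the $\hat X=X\times\mathbb P^1$ reduction; your Schur-positive expansion of $s_\lambda'$ is correct but, as the paper's proof shows, not needed for this proposition.
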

\begin{proof}
The proof is essentially the one of \cite[Theorem 10.2]{RossToma2}.  Set $A':= A\langle th\rangle$. First we observe that $s_{\lambda}(A')\in \Amp^{d-1}(X)$ by \cite{FL}, and also $s_{\lambda}'(A')\in \Amp^{d-2}(X)$ since $s_{\lambda}'$ is Schur positive. 

Suppose that $\alpha\in H^{1,1}(X)$ satisfies $\int_X \alpha \cdot s_{\lambda}(A')=0$.     Consider $\hat{X} = X\times \mathbb P^1$ and let $\tau$ denote the hyperplane class on $\mathbb P^1$ and let $\hat{A} = A'\boxtimes \mathcal O_{\mathbb P^1}(1)$ which is an ample bundle on $\hat{X}$ and $s_{\lambda}(\hat{A}) = s_\lambda(A') + s'_\lambda(A')\tau$.   Also set $\hat{h} = h + \tau$ which is ample on $\hat{X}$.

By \cite[Theorem 5.3]{RossToma1} $s_{\lambda}(\hat{A'}) \in H^{d-1,d-1}(\hat{X})$ has the Hodge-Riemann property with respect to $\hat{h}$, so if $\hat{\alpha} \in H^{1,1}(\hat{X})$ satisfies $\int_{\hat{X}} \hat{\alpha} \cdot s_\lambda(\hat{A})\cdot \hat{h}=0$ then $\int_{\hat{X}} \hat{\alpha}^2\cdot s_{\lambda}(\hat{A})\le 0$ with equality if and only if $\hat{\alpha}=0$.

Now we apply this to
$$\hat{\alpha} = \alpha - \frac{\int_X \alpha \cdot s_\lambda'(A')\cdot h}{\int_X s_\lambda(A')\cdot h} \tau$$
(observing that $\int_X s_\lambda(A')\cdot h>0$ as $A'$ is ample \cite{FultonLazarsfeld}).  Using our assumption that $\int_X \alpha \cdot s_{\lambda}(A')=0$ one checks that $\int_{\hat{X}} \hat{\alpha}\cdot  s_{\lambda}(\hat{A})\cdot\hat{h}=0$ and we are done.
\end{proof}

We now turn to the analogous pointwise definitions.

\begin{definition}[Pointwise Hodge-Riemann Property]\label{def:analyticHR}
Let $X$ be a complex manifold  and let $\Omega_{d-2}\in \Omega^{d-2,d-2}(X)$ be $\partial \bar{\partial}$-closed and strictly weakly positive. We say that
$\Omega_{d-2}$ has the Hodge-Riemann property with respect to a strictly  positive $(1,1)$-form $\omega$ if for all points $x\in X$ we have
$$\Omega_{d-2}(x) \wedge \omega(x)^2>0$$
and for all $\alpha \in \Omega^{1,1}(X)$ we have
$$\Omega_{d-2}(x) \wedge \omega(x) \wedge \alpha(x)=0 \Rightarrow \alpha(x)^2 \wedge \Omega_{d-2}(x)\le 0$$
with equality if and only if $\alpha(x)=0$.
\end{definition}

Just as for cohomology classes, when $\Omega_{d-2}$ has the Hodge-Riemann property it defines for each $x\in X$ a bilinear form on $\Lambda^{1,1}T^*_x$ with signature $(+,-,\ldots,-)$, and so the map $\tau \mapsto \tau \wedge \Omega_{d-2}(x)$ from $\Lambda^{1,1}T_x^*\to \Lambda^{d-1,d-1} T_x^*$ is an isomorphism.   When this occurs for $\gamma \in \Lambda^{d-1,d-1} T_x^*$ we define $(\gamma/\Omega_{d-2}(x))\in \Lambda^{1,1}T_x^*$ by requiring
$$(\gamma/\Omega_{d-2}(x)) \wedge \Omega_{d-2}(x) = \gamma.$$

\begin{definition}[Hodge-Riemann pairs of differential forms]\label{def:analyticHRpair}
    Let $X$ be a compact complex manifold of dimension $d$ and let $\Omega_{d-1}$, $\Omega_{d-2}$ be $\partial \bar{\partial}$-closed forms of type $(d-1,d-1)$ and $(d-2,d-2)$ respectively.   Assume also that $\Omega_{d-2}$ is strictly weakly positive.

    We say $(\Omega_{d-1},\Omega_{d-2})$ is a \textit{Hodge-Riemann pair} if at each point of $x\in X$ the following holds

    \begin{enumerate}
\item    $\Omega_{d-2}$ has the pointwise Hodge-Riemann property with respect to some  strictly  positive form $\omega$ on $X$
\item $\Omega_{d-1}\wedge \omega >0$
\item $\Omega_{d-2} \wedge (\Omega_{d-1}/\Omega_{d-2})^2>0$.
\end{enumerate}
\end{definition}

\begin{remark}
A similar definition can be found in \cite[Section 2]{ChenWentworth}.
\end{remark}

Just as in the case of cohomology classes, this definition simplifies when $\Omega_{d-1}$ is also assumed to be positive.

\begin{lemma}\label{lem:analyticequivalentHRpairforpositive}
With the notation as in Definition \ref{def:analyticHRpair} suppose in addition that there exists a strictly positive $(1,1)$-form $\omega$ such that for all points $x\in X$ we have
$\Omega_{d-1}(x) \wedge \omega(x)>0$. 
 Then $(\Omega_{d-1},\Omega_{d-2})$ is a Hodge-Riemann pair if and only if for every $\alpha\in \Omega^{1,1}(X)$ it holds that
 $$\alpha \wedge \Omega_{d-1} = 0 \Rightarrow \alpha^2 \wedge \Omega_{d-2} \le 0$$ 
    at each point of $x \in X$, with equality at $x$ if and only if $\alpha(x) = 0$.
\end{lemma}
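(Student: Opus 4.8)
The plan is to transcribe the proof of Lemma~\ref{lem:equivalentHRpairforpositive} pointwise. Fix $x\in X$ and let $V_x$ be the finite-dimensional real vector space of real $(1,1)$-forms on $T_xX$, equipped with the symmetric bilinear form
$$Q_x(\alpha,\beta)\ :=\ \alpha\wedge\beta\wedge\Omega_{d-2}(x)\ \in\ \Lambda^{d,d}T^*_x\ \cong\ \R,$$
where we fix the orientation so that strictly positive $(d,d)$-forms are identified with positive reals. Since the wedge pairing $\Lambda^{1,1}T^*_x\times\Lambda^{d-1,d-1}T^*_x\to\Lambda^{d,d}T^*_x$ is perfect, whenever $Q_x$ is nondegenerate the class $\Omega_{d-1}(x)/\Omega_{d-2}(x)$ is exactly the element $\beta_x\in V_x$ representing the linear functional $L_x(\alpha):=\alpha\wedge\Omega_{d-1}(x)$ through $Q_x$, i.e.\ $Q_x(\beta_x,\cdot)=L_x$ and $\ker L_x=\beta_x^{\perp_{Q_x}}$. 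Two basic observations feed everything: first, the hypothesis on $\omega$ gives $L_x(\omega(x))=\omega(x)\wedge\Omega_{d-1}(x)>0$, so $L_x\neq 0$ and $H_x:=\ker L_x\subset V_x$ is a hyperplane not containing $\omega(x)$; second, strict weak positivity of $\Omega_{d-2}$ together with strong positivity of $\omega(x)^2$ gives $Q_x(\omega(x),\omega(x))=\omega(x)^2\wedge\Omega_{d-2}(x)>0$ (this is the only genuinely analytic input, and is standard, cf.\ \cite{HarveyKnapp}).

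For the implication ``$\Leftarrow$'', assume the displayed condition. I first claim $Q_x|_{H_x}$ is negative definite. Given $v\in H_x$, pick any smooth $(1,1)$-form $\tilde\alpha$ on $X$ with $\tilde\alpha(x)=v$ and set $\alpha:=\tilde\alpha-f\omega$ with $f:=(\tilde\alpha\wedge\Omega_{d-1})/(\omega\wedge\Omega_{d-1})$, which is a smooth function since $\omega\wedge\Omega_{d-1}$ is nowhere zero; then $\alpha\wedge\Omega_{d-1}=0$ on all of $X$, while $f(x)=0$ gives $\alpha(x)=v$, so the hypothesis yields $Q_x(v,v)\le 0$ with equality only if $v=0$. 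Combining this with $Q_x(\omega(x),\omega(x))>0$ and $\omega(x)\notin H_x$, a rank count ($\dim V_x-1$ negative eigenvalues from $H_x$, at least one positive from $\omega(x)$, totalling $\dim V_x$) forces $Q_x$ to have signature $(+,-,\ldots,-)$ with $\omega(x)$ spanning the positive part; equivalently $\Omega_{d-2}(x)$ has the pointwise Hodge--Riemann property with respect to $\omega$, which is condition (1) of Definition~\ref{def:analyticHRpair}. Condition (2) is the hypothesis. For condition (3): $Q_x$ is now nondegenerate, so $\beta_x=\Omega_{d-1}(x)/\Omega_{d-2}(x)$ is defined; since $\ker L_x=\beta_x^{\perp_{Q_x}}$ is negative definite in a Lorentzian space and $\beta_x\neq 0$, $\beta_x$ can be neither lightlike nor spacelike, so $Q_x(\beta_x,\beta_x)>0$, i.e.\ $\Omega_{d-2}(x)\wedge(\Omega_{d-1}(x)/\Omega_{d-2}(x))^2>0$. (Alternatively one may copy verbatim the $t$-manipulation of Lemma~\ref{lem:equivalentHRpairforpositive}.) Thus $(\Omega_{d-1},\Omega_{d-2})$ is a Hodge--Riemann pair.

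For the implication ``$\Rightarrow$'', assume $(\Omega_{d-1},\Omega_{d-2})$ is a Hodge--Riemann pair. At each $x$, condition (1) makes $Q_x$ of signature $(+,-,\ldots,-)$, hence nondegenerate, so $\beta_x:=\Omega_{d-1}(x)/\Omega_{d-2}(x)$ is defined, and condition (3) reads $Q_x(\beta_x,\beta_x)>0$. If $\alpha\in\Omega^{1,1}(X)$ satisfies $\alpha\wedge\Omega_{d-1}=0$, then at $x$ we get $Q_x(\alpha(x),\beta_x)=\alpha(x)\wedge\Omega_{d-1}(x)=0$, so the reverse Cauchy--Schwarz inequality in Lorentzian signature gives $0=Q_x(\alpha(x),\beta_x)^2\ge Q_x(\alpha(x),\alpha(x))\,Q_x(\beta_x,\beta_x)$, whence $\alpha(x)^2\wedge\Omega_{d-2}(x)=Q_x(\alpha(x),\alpha(x))\le 0$; and equality forces $\alpha(x)$ proportional to $\beta_x$, after which $0=Q_x(\alpha(x),\beta_x)$ and $Q_x(\beta_x,\beta_x)>0$ give $\alpha(x)=0$. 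Note that this direction does not use the hypothesis on $\omega$ at all: it is needed only in ``$\Leftarrow$'', both to pin down the sign in condition (2) and to make the extension step above work.

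Everything here is routine linear algebra carried out fibrewise over $X$, so the proof should be short. The one point that needs genuine care — and which I expect to be the main (mild) obstacle — is the passage from the \emph{global} displayed condition, quantified over forms $\alpha\in\Omega^{1,1}(X)$, to the \emph{pointwise} statement that $Q_x|_{\ker L_x}$ is negative definite; this is precisely what the extension $\alpha=\tilde\alpha-f\omega$ handles, using $\omega\wedge\Omega_{d-1}>0$ everywhere. The only other non-formal ingredient is the positivity $\omega(x)^2\wedge\Omega_{d-2}(x)>0$, which is a standard consequence of strict weak positivity of $\Omega_{d-2}$.
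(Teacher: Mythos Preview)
Your proof is correct and follows essentially the same approach the paper intends (it omits the proof, referring back to Lemma~\ref{lem:equivalentHRpairforpositive}). The one genuine addition you make over the cohomological version---the extension step $\alpha=\tilde\alpha-f\omega$ to pass from the globally quantified hypothesis to the pointwise negative-definiteness of $Q_x|_{H_x}$---is exactly the extra care the pointwise setting requires, and your handling of it is clean.
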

\begin{proof}
The proof is essentially the same that of Lemma \ref{lem:equivalentHRpairforpositive} and so omitted.
\end{proof}

\begin{remark}\label{rem:diffformpairisacohomologypair}
Assume $X$ is compact and K\"ahler.  If $(\Omega_{d-1},\Omega_{d-2})$ is a Hodge-Riemann pair then the cohomology classes $([\Omega_{d-1}],[\Omega_{d-2}])$ form a Hodge-Riemann pair (this follows easily from \cite[Corollary 5.4]{RossToma3}).    The converse does not hold, namely it is not the case that every Hodge-Riemann pair of cohomology classes can be represented as the classes of a pointwise Hodge-Riemann pair of differential forms.  In fact a positive class may not be representable by a positive form \cite{DELV11}.
\end{remark}

In \cite{RossToma3} we essentially prove that Schur polynomials of K\"ahler classes give rise to Hodge-Riemann pairs.  In fact this holds pointwise:

\begin{proposition}\label{prop:ShurHRpair}
Let $\omega_1,\ldots,\omega_e$ be K\"ahler forms on a complex manifold $X$ of dimension $d$. Suppose that $e \ge d-1$ and let $\lambda$ be a partition of $d-1$. Then $(s_\lambda(\omega_1,\ldots,\omega_e),s_\lambda'(\omega_1,\ldots,\omega_e))$ is a Hodge-Riemann pair.
\end{proposition}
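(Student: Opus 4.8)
The plan is to reduce the pointwise statement about the $(d-1,d-1)$-form $s_\lambda(\omega_1,\ldots,\omega_e)$ and the $(d-2,d-2)$-form $s'_\lambda(\omega_1,\ldots,\omega_e)$ to the pointwise Hodge-Riemann property for Schur forms of K\"ahler forms, which is the content of the results of \cite{RossToma3} (the pointwise version of the statement that $s_\mu(\omega_1,\ldots,\omega_e)$ has the Hodge-Riemann property for $\mu$ a partition of length $d-2$). First I would fix a point $x\in X$ and work entirely in $\Lambda^{\bullet,\bullet}T_x^*$, so that all positivity statements become statements about forms on a single Hermitian vector space of dimension $d$. I would first record that $\Omega_{d-1}:=s_\lambda(\omega_1,\ldots,\omega_e)$ is strictly weakly positive and $\Omega_{d-2}:=s'_\lambda(\omega_1,\ldots,\omega_e)$ is strictly weakly positive at $x$: this follows because Schur polynomials $s_\lambda$ and their derived polynomials $s'_\lambda$ are monomial-positive in the (positive) forms $\omega_i$, or more precisely because $s'_\lambda = \sum_\mu s_\mu$ (a sum over partitions $\mu$ of length $d-2$ obtained by removing a box, with nonnegative — in fact $0/1$ after accounting for multiplicity — coefficients), and each such $s_\mu(\omega_1,\ldots,\omega_e)$ is strictly weakly positive.

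The heart of the argument is the mimic of the proof of \cref{prop:ShurAmpleHRpair}, transported to the pointwise analytic setting. Since $\Omega_{d-1}$ is strictly weakly positive, there is a strictly positive $(1,1)$-form $\omega$ (any of the $\omega_i$ works, or a generic positive combination) with $\Omega_{d-1}(x)\wedge\omega(x)>0$, so by \cref{lem:analyticequivalentHRpairforpositive} it suffices to show that for every $\alpha\in\Lambda^{1,1}T_x^*$ with $\alpha\wedge\Omega_{d-1}(x)=0$ we have $\alpha^2\wedge\Omega_{d-2}(x)\le 0$, with equality iff $\alpha=0$. To see this, pass to $\hat X=X\times\mathbb P^1$ at the point $(x,p)$ (equivalently, adjoin one extra complex coordinate with a hyperplane class $\tau$), set $\hat\omega_i=\omega_i\boxplus\tau$ and observe that $s_\lambda(\hat\omega_1,\ldots,\hat\omega_e)=s_\lambda(\omega_1,\ldots,\omega_e)+s'_\lambda(\omega_1,\ldots,\omega_e)\wedge\tau$ at the relevant point, exactly as in the algebraic case. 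Since $\lambda$ has length $d-1$, which is $(\dim\hat X)-2$, the pointwise Hodge-Riemann property of \cite{RossToma3} applies to $s_\lambda(\hat\omega_1,\ldots,\hat\omega_e)$ on $\hat X$ with respect to $\hat\omega=\omega\boxplus\tau$. Applying it to the test form
$$\hat\alpha=\alpha-\frac{\alpha\wedge s'_\lambda(\omega_1,\ldots,\omega_e)}{s_\lambda(\omega_1,\ldots,\omega_e)\wedge\omega}\,\tau,$$
where the quotient is the ratio of the two top-degree forms in the $X$-directions (a well-defined real number once we divide by the volume form, the denominator being strictly positive by strict weak positivity of $\Omega_{d-1}$), one checks using $\alpha\wedge\Omega_{d-1}(x)=0$ that $\hat\alpha\wedge s_\lambda(\hat\omega_1,\ldots,\hat\omega_e)\wedge\hat\omega=0$; then the Hodge-Riemann property gives $\hat\alpha^2\wedge s_\lambda(\hat\omega_1,\ldots,\hat\omega_e)\le 0$ with equality iff $\hat\alpha=0$. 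Expanding $\hat\alpha^2\wedge s_\lambda(\hat\omega_1,\ldots,\hat\omega_e)$ and extracting the coefficient of $\tau$ yields $\alpha^2\wedge\Omega_{d-2}(x)\le 0$; and $\hat\alpha=0$ forces both $\alpha=0$ and the vanishing of the $\tau$-coefficient, giving the equality characterization.

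The last ingredient is condition (3) of \cref{def:analyticHRpair}, namely $\Omega_{d-2}\wedge(\Omega_{d-1}/\Omega_{d-2})^2>0$ at $x$; but this is precisely the content of \cref{lem:analyticequivalentHRpairforpositive} once positivity of $\Omega_{d-1}$ against $\omega$ is known, so it comes for free from the equivalence we are using. I expect the main obstacle to be bookkeeping in the $\mathbb P^1$-factor at the level of forms rather than cohomology classes: one must make sense of ``the coefficient of $\tau$'' pointwise (forms on $\hat X$ restricted to $T_{(x,p)}^*\hat X$ decompose according to whether they involve the one extra coordinate, and this decomposition is canonical), verify that the identity $s_\lambda(\hat\omega_1,\ldots,\hat\omega_e)=\Omega_{d-1}+\Omega_{d-2}\wedge\tau$ holds at the level of forms and not merely classes (this is a formal consequence of the splitting principle applied to the Chern-root presentation $s_\lambda(x_1,\ldots,x_e)$ with the extra root $\tau$ added and then degree-truncated to the dimension of $\hat X$), and check the strict weak positivity claims needed to make the denominators nonzero. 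None of these is deep, but they require care; the genuine mathematical input — that Schur forms of K\"ahler forms satisfy the pointwise Hodge-Riemann relations — is imported wholesale from \cite{RossToma3}.
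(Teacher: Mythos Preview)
Your approach is essentially the paper's: work pointwise in linear algebra, adjoin an extra one-dimensional factor (the paper works on $E\oplus F$ with $\dim_{\C}F=1$ and a positive form $\theta$ on $F$ playing the role of your $\tau$), use the identity $s_\lambda(\omega_i+\theta)=s_\lambda(\omega_i)+s'_\lambda(\omega_i)\wedge\theta$, and apply the linear Hodge--Riemann property of Schur forms from \cite{RossToma3} to a suitably shifted $\hat\alpha=\alpha-\kappa\theta$. One correction: your scalar must be $\kappa=\dfrac{\alpha\wedge s'_\lambda\wedge\omega}{s_\lambda\wedge\omega}$ (the paper's choice) --- as you wrote it the numerator $\alpha\wedge s'_\lambda$ is only a $(d-1,d-1)$-form, not top-degree on $T_xX$, so the quotient is not a real number; with the missing $\omega$ inserted, your verification that $\hat\alpha\wedge\hat s_\lambda\wedge\hat\omega=0$ and the extraction of $\alpha^2\wedge\Omega_{d-2}\le 0$ go through exactly as in the paper.
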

\begin{proof}
The proof is similar to that of Proposition \ref{prop:ShurAmpleHRpair}.  Since this is a pointwise statement, we prove it in the linear case. So let $E$ and $F$ be two complex vector spaces of dimensions $d$ and $1$, respectively. We consider strictly positive $(1,1)$-forms $\omega_1,\ldots,\omega_e$ on $E$ and $\theta$ on $F$, and set 
\[
    \hat{s}_\lambda := s_\lambda(\omega_1 + \theta,\ldots,\omega_e + \theta) \in \bigwedge_{\R}^{d-1,d-1} (E \oplus F)^*.
\]
We have
\[
    \hat{s}_\lambda = s_\lambda(\omega_1,\ldots,\omega_e) + s'_\lambda(\omega_1,\ldots,\omega_e) \wedge \theta.
\]
We set 
\[
    \Omega := s_\lambda(\omega_1,\ldots,\omega_e), \quad \Omega' := s'_\lambda(\omega_1,\ldots,\omega_e), \quad \kappa := \frac{\alpha \wedge \Omega' \wedge \omega}{\Omega \wedge \omega}, \quad \hat{\alpha}:=\alpha-\kappa\theta,
\]
where $\omega$ is a fixed strictly positive $(1,1)$-form on $E$, and $\alpha$ is an arbitrary real $(1,1)$-form on $E$. 
Then 
\[
    \hat{s}_\lambda \wedge \omega^2 = \Omega' \wedge \omega^2>0
\]
and 
\[
    \hat{\alpha}\wedge\hat{s}_\lambda \wedge \omega =\alpha\wedge(\Omega+ \Omega'\wedge\theta) \wedge \omega-\kappa\theta\wedge(\Omega+ \Omega'\wedge\theta) \wedge \omega =(\alpha\wedge \Omega'\wedge\omega-\kappa\Omega\wedge\omega)\wedge\theta= 0.
\]
By the Hodge-Riemann property of Schur classes in the linear case for $\hat{s}_\lambda$ \cite[Theorem 10.2]{RossToma3} we get
\[
    \hat{\alpha}^2\wedge\hat{s}_\lambda \le 0
\]
with equality if and only if $\hat{\alpha}=0$, which is again equivalent to $\alpha=0$.

Now 
\[
    \hat{\alpha}^2\wedge\hat{s}_\lambda =
    (\alpha^2\wedge\Omega'-2\kappa\alpha\wedge\Omega)\wedge\theta
\]
and thus
\[
    (\alpha^2\wedge\Omega')(\Omega\wedge\omega)\le2(\alpha\wedge\Omega)(\alpha\wedge\Omega'\wedge\omega),
\]
from which the Hodge-Riemann property of the pair
$(\Omega,\Omega')$ directly follows.
\end{proof}

\begin{remark}\label{rmk:otherpolynomials}
One may extend the above results to derived Schur polynomials, or even products of derived Schur polynomials (proofs left to the reader; see \cite[Section 5.2]{RossToma1}).   It is not the case that this extends to every positive linear combination of Schur polynomials (see \cite[Remark 9.3]{RossToma1} for a related example), but it will hold for some linear combinations (see \cite[Section 9]{RossToma2}).
\end{remark}

\section{Bogomolov Pairs}\label{sec:bogomolovpairs}

In the following $X$ is either a complex smooth projective variety of dimension $d$ or a compact K\"ahler manifold of dimension $d$.  In the first case we let $\eta_{d-1}\in \Amp^{d-1}(X)$ and in the second we let $\eta_{d-1}\in \cK^{d-1}(X)$.

\begin{definition}[Slope-Semistability]\label{def:sstability}
If $E$ is a torsion-free coherent sheaf on $X$ we write
$$\mu_{\eta_{d-1}}(E) : = \frac{\int_X c_1(E) \cdot \eta_{d-1}}{\rk(E)}.$$

 A torsion-free coherent sheaf $E$ on $X$ is said to be \emph{semistable with respect to $\eta_{d-1}$} if for all proper coherent subsheaves $F\subset E$ we have
$$\mu_{\eta_{d-1}}(F)\le \mu_{\eta_{d-1}}(E).$$
We say $E$ is stable with respect to $\eta_{d-1}$ if strict inequality always holds.  We say it is polystable if $E=\oplus_i E_i$ with each $E_i$ stable and $\mu_{\eta_{d-1}}(E_i)=\mu_{\eta_{d-1}}(E)$.
\end{definition}

\begin{remark}
It is enough to check the slope inequalities only for saturated subsheaves $F \subset E$, cf. \cite[Corollary 2.14]{GKP-Movable}.  Thus $E$ is $\eta_{d-1}$-semistable if and only if for all torsion-free quotients $E\to G$ it holds that $\mu_{\eta_{d-1}}(G)\ge \mu_{\eta_{d-1}}(E)$ (with strict inequality needed for stability).
\end{remark}

\begin{lemma}\label{lem:stabilityisopen}
Suppose that a torsion-free sheaf $E$ is $\eta_{d-1}$-stable.  Then $E$ is $\eta$-stable for $\eta\in \Amp^{d-1}(X)$ (resp. $\eta\in \cK^{d-1}(X)$) sufficiently close to $\eta_{d-1}$.
\end{lemma}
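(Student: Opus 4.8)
The plan is to produce a neighbourhood $U$ of $\eta_{d-1}$ in $\Amp^{d-1}(X)$ (in the analytic case, in $\cK^{d-1}(X)$) such that $E$ is $\eta$-stable for every $\eta\in U$. By \cite[Corollary 2.14]{GKP-Movable} it is enough to ensure that $\mu_\eta(F)<\mu_\eta(E)$ for every proper \emph{saturated} subsheaf $F\subsetneq E$ and every $\eta\in U$. For such an $F$ of rank $p$ this inequality reads $\ell_{p,c_1(F)}(\eta)<0$, where for $v$ in $N^1(X)$ (resp.\ $H^{1,1}(X)$) we set
$$\ell_{p,v}(\eta):=\rk(E)\int_X v\cdot\eta-p\int_X c_1(E)\cdot\eta;$$
this is linear in $\eta$ and depends on $F$ only through the pair $(p,c_1(F))$. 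Hence it suffices to show that the set of pairs $(p,c_1(F))$, as $F$ ranges over subsheaves that could destabilise $E$ with respect to some $\eta$ near $\eta_{d-1}$, is finite.

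Suppose $\eta$ is close to $\eta_{d-1}$ and $E$ is not $\eta$-stable. Using that the Harder--Narasimhan filtration exists with respect to ample (movable) curve classes \cite{GKP-Movable}, we may choose a proper saturated subsheaf $F\subsetneq E$ of maximal $\eta$-slope and then of maximal rank among all proper saturated subsheaves of $E$; such an $F$ is $\eta$-semistable and satisfies $\mu_\eta(F)\ge\mu_\eta(E)$. Now the key point is a boundedness statement: by the boundedness results for sheaves that are semistable with respect to an ample curve class (\cite{GKP-Movable} in the projective case, and the corresponding statement over compact K\"ahler manifolds used in \cite{MegyPavelToma}), there is a relatively compact neighbourhood $V$ of $\eta_{d-1}$ inside $\Amp^{d-1}(X)$ (resp.\ $\cK^{d-1}(X)$) such that the family of all sheaves $F$ arising as above for some $\eta\in V$ is bounded. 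A bounded family has only finitely many Chern classes, so together with the finitely many possibilities $p\in\{1,\dots,\rk(E)-1\}$ we obtain a finite set $\Delta$ of pairs $(p,v)$ containing all $(\rk(F),c_1(F))$ that can occur.

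Each $(p,v)\in\Delta$ is realised by a proper saturated subsheaf of $E$, so $\ell_{p,v}(\eta_{d-1})<0$ because $E$ is $\eta_{d-1}$-stable. Since $\Delta$ is finite and each $\ell_{p,v}$ is continuous, $U:=\{\eta\in V:\ell_{p,v}(\eta)<0\text{ for all }(p,v)\in\Delta\}$ is an open neighbourhood of $\eta_{d-1}$. For $\eta\in U$ every potential destabilising subsheaf $F$ has $\ell_{\rk(F),c_1(F)}(\eta)<0$, i.e.\ $\mu_\eta(F)<\mu_\eta(E)$, so $E$ is $\eta$-stable; this proves the statement in both settings. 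The only non-formal ingredient is the boundedness statement above --- specifically, that the Grothendieck-type bound on semistable subsheaves of $E$ can be taken uniform as the polarising class varies near $\eta_{d-1}$, and that such boundedness is available in the K\"ahler setting; granting this, the remainder is a routine continuity and compactness argument.
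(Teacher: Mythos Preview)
Your argument is correct and follows essentially the same strategy as the paper: bound the potential destabilisers using a Grothendieck-type lemma that is uniform over a small ball of polarisations, extract finitely many numerical invariants, and conclude by continuity.

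The paper's proof is organised slightly differently. It works with torsion-free \emph{quotients} $E\to G$ rather than saturated subsheaves, and it avoids invoking the Harder--Narasimhan filtration altogether: one fixes $C:=\mu_{\eta_{d-1}}(E)+1$, observes that any quotient with $\mu_\eta(G)>C$ cannot destabilise, and bounds the remaining quotients $\{G : \mu_\eta(G)\le C \text{ for some }\eta\in B\}$ directly via \cite[Theorem 3.1]{MegyPavelToma}. This is a hair more economical than your route, since you single out the \emph{maximal} destabilising subsheaf and use its $\eta$-semistability, whereas all that is really needed is the slope bound $\mu_\eta(F)\ge\mu_\eta(E)$ (equivalently $\mu_\eta(E/F)\le\mu_\eta(E)$) to feed into the Grothendieck-type boundedness. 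Your citations are also slightly off target: the precise input in the projective case is \cite[Theorem 3.1]{MegyPavelToma} (not \cite{GKP-Movable}), and in the K\"ahler case the paper simply refers to \cite[Corollary 6.9]{TomaLimitareaII}. But these are cosmetic points; the substance of your proof is the same as the paper's.
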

\begin{proof}
We prove the statement when $X$ is projective. The K\"ahler case is similar and was shown in \cite[Corollary 6.9]{TomaLimitareaII}.

Set $C:= \mu_{\eta_{d-1}}(E)+1$.  For $\eta$ in some small ball $B$ around $\eta_{d-1}$ in $\Amp^{d-1}(X)$ we have $\mu_\eta(E)< C$.  The main ingredient of the proof is the fact that the set $\mathcal S$ of torsion-free quotient sheaves $E\to G$ such that $\mu_\eta(G)\le C$ for some $\eta \in B$ is bounded \cite[Theorem 3.1]{MegyPavelToma}.  

It is sufficient to check stability with respect to torsion-free quotients $E\to G$. For such a $G$ we have $\mu_{\eta_{d-1}}(G) > \mu_{\eta_{d-1}}(E)$ by the stability hypothesis on $E$.  So the boundedness of $\mathcal S$  implies that shrinking $B$ if necessary we can arrange that for all quotients in $\mathcal S$ we have $\mu_{\eta}(G) > \mu_{\eta}(E)$.  On the other hand, for the remaining torsion-free quotients $G$ we have $\mu_{\eta}(G)> C \ge \mu_{\eta}(E)$ and we are done.
\end{proof}

\begin{definition}[Discriminant]
Let $E$ be a torsion-free sheaf on $X$.  The \emph{discriminant} of $E$ is
$$\Delta(E) := 2\rk(E) c_2(E)  - (\rk(E)-1) c_1(E)^2.$$
\end{definition}

\begin{definition}[Bogomolov pairs]\label{def:bogomolovpairsofclasses}
We say that a pair $(\eta_{d-1},\eta_{d-2})$ is a \emph{Bogomolov pair} if
$$ E \text{ is semistable with respect to } \eta_{d-1} \Rightarrow \int_X \Delta(E) \cdot \eta_{d-2}  \ge 0.$$
\end{definition}

\begin{remark}
\begin{enumerate}
\item  If we want to consider the weaker condition that this inequality holds for a certain subclass of semistable sheaves $E$ we use the corresponding qualified definition.  For example, we will say that $(\eta_{d-1},\eta_{d-2})$ is a \emph{Bogomolov pair with respect to stable vector bundles} if it holds that $\int_X \Delta(E)\cdot \eta_{d-2}\ge 0$ for all vector bundles $E$ that are stable with respect to $\eta_{d-1}$. 
\item The classical Bogomolov inequality \cite{Bogomolov_holomorphictensors,Gieseker_Bogomolov} is that if $h$ is the class of an ample divisor on $X$ then $(h^{d-1},h^{d-2})$ is a Bogomolov pair.
\end{enumerate}
\end{remark}

\begin{conjecture}\label{conj:HRimpliesBG}

 If $(\eta_{d-1},\eta_{d-2})$ is a Hodge-Riemann pair and $\eta_{d-1}\in \Amp^{d-1}(X)$ (resp. $\eta_{d-1}\in \cK^{d-1}(X)$), then $(\eta_{d-1},\eta_{d-2})$ is a Bogomolov pair.
\end{conjecture}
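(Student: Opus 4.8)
The plan is to run the analytic argument that proves Theorem~\ref{thm:HRimpliesBG:intro}. It has three layers. First, the pair $(\eta_{d-1},\eta_{d-2})$ must be represented by a pointwise Hodge--Riemann pair of forms $(\Omega_{d-1},\Omega_{d-2})$ in the sense of Definition~\ref{def:analyticHRpair}, with $\Omega_{d-1}$ moreover strictly positive. Second, one proves the discriminant inequality for $\eta_{d-1}$-stable bundles using the Kobayashi--Hitchin correspondence together with a Kobayashi--L\"ubke computation. Third, one reduces an arbitrary semistable torsion-free sheaf to the stable bundle case.

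For the last layer: given an $\eta_{d-1}$-semistable torsion-free $E$ of rank $r$, the sequence $0\to E\to E^{**}\to Q\to 0$ with $Q$ of codimension $\ge 2$ gives $\Delta(E)=\Delta(E^{**})+2r\,\ch_2(Q)$ with $\ch_2(Q)$ effective, so (as $\eta_{d-2}$ is ample and $E^{**}$ is again $\eta_{d-1}$-semistable) it suffices to treat reflexive $E$, and then --- modulo a resolution step, as in Theorem~\ref{thm:HRimpliesBG:intro} --- stable locally free $E$ after taking a Jordan--H\"older filtration. For the filtration step one uses, for a subquotient sequence $0\to E'\to E\to E''\to 0$ of equal $\eta_{d-1}$-slope, the identity
$$\frac{\Delta(E)}{\rk E}=\frac{\Delta(E')}{\rk E'}+\frac{\Delta(E'')}{\rk E''}-\frac{\rk E'\cdot\rk E''}{\rk E}\Bigl(\frac{c_1(E')}{\rk E'}-\frac{c_1(E'')}{\rk E''}\Bigr)^2;$$
since $\int_X\bigl(\tfrac{c_1(E')}{\rk E'}-\tfrac{c_1(E'')}{\rk E''}\bigr)\cdot\eta_{d-1}=0$, the Hodge--Riemann property of $\eta_{d-2}$ forces $\int_X\bigl(\tfrac{c_1(E')}{\rk E'}-\tfrac{c_1(E'')}{\rk E''}\bigr)^2\cdot\eta_{d-2}\le 0$, so the last term contributes non-negatively to $\int_X\Delta(E)\cdot\eta_{d-2}$ and induction on the rank finishes this layer.

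For an $\eta_{d-1}$-stable bundle $E$, write $\Omega_{d-1}=\omega_g^{\,d-1}$ for the unique positive $(1,1)$-form $\omega_g$; since $\Omega_{d-1}$ is $\partial\bar\partial$-closed, $\omega_g$ is a Gauduchon metric and $\omega_g$-slope stability coincides with $\eta_{d-1}$-slope stability. By the Kobayashi--Hitchin correspondence $E$ carries a Hermitian--Einstein metric $h$ for $\omega_g$, whose curvature satisfies $i\Lambda_{\omega_g}F_h=\gamma\cdot\Id$, so the trace-free part $F_h^0$ of $F_h$ is $\omega_g$-primitive. The Kobayashi--L\"ubke identity reads $\Delta(E,h)=\tfrac{r}{4\pi^2}\tr(F_h^0\wedge F_h^0)$ as forms, and in a local unitary frame $\tr(F_h^0\wedge F_h^0)$ is a sum of terms $-\beta\wedge\beta$ with $\beta$ a real $\omega_g$-primitive $(1,1)$-form (the real and imaginary parts of the entries of $F_h^0$). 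Each such $\beta$ satisfies $\beta\wedge\Omega_{d-1}=\beta\wedge\omega_g^{\,d-1}=0$, so the \emph{Hodge--Riemann pair} property, in the form of Lemma~\ref{lem:analyticequivalentHRpairforpositive}, gives $\beta^2\wedge\Omega_{d-2}\le 0$ and hence $-\beta\wedge\beta\wedge\Omega_{d-2}\ge 0$. Summing, $\Delta(E,h)\wedge\Omega_{d-2}\ge 0$ pointwise, and integrating yields $\int_X\Delta(E)\cdot\eta_{d-2}\ge 0$. When $\eta_{d-1}=s_\lambda(\alpha_1,\dots,\alpha_e)$ and $\eta_{d-2}=s'_\lambda(\alpha_1,\dots,\alpha_e)$ for K\"ahler classes $\alpha_i$, Proposition~\ref{prop:ShurHRpair} supplies exactly the required forms ($\Omega_{d-1}=s_\lambda(\omega_1,\dots,\omega_e)$, $\Omega_{d-2}=s'_\lambda(\omega_1,\dots,\omega_e)$, $\omega_g=\sqrt[d-1]{\Omega_{d-1}}$), recovering Theorem~\ref{thm:HRimpliesBG:intro}.

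The hard part --- the reason this remains open --- is the first layer: an arbitrary Hodge--Riemann pair of cohomology classes need not be representable by a pointwise Hodge--Riemann pair of forms with $\Omega_{d-1}$ strictly positive. Indeed, already a single ample class may admit no smooth strictly positive representative (\cite{DELV11}, Remark~\ref{rem:diffformpairisacohomologypair}); without such an $\Omega_{d-1}$ there is no Gauduchon metric to anchor the Hermitian--Einstein equation, and without the pointwise \emph{pair} property the primitive forms $\beta$ above need not pair non-positively against $\Omega_{d-2}$. So the analytic method reaches only pairs equipped with explicit positive representatives --- powers of K\"ahler classes, and Schur or Segre classes of ample bundles. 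A proof in general would need either a new positivity theorem producing such representatives for an arbitrary Hodge--Riemann pair, or a genuinely different approach; the obvious algebraic substitute --- restrict $E$ to a surface cut out by members of $\eta_{d-2}$ and invoke Mehta--Ramanathan together with the surface Bogomolov inequality --- runs into the same wall, since a general $\eta_{d-2}\in\Amp^{d-2}(X)$ is not a product of ample divisor classes. I expect this representability gap to be the crux.
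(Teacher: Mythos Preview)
This statement is a \emph{conjecture} in the paper and is not proved there; your proposal is not a claimed proof but a diagnosis of why it remains open, and that diagnosis is accurate. The three-layer structure you describe is exactly how the paper handles the special cases it does settle: Lemma~\ref{lem:reflexivestable} for the filtration/reflexive reduction, Proposition~\ref{prop:Bogomolov-HRpairs} for the Kobayashi--L\"ubke step, and Theorem~\ref{thm:HRimpliesBG} for the Schur case. The representability obstruction you isolate --- that an arbitrary Hodge--Riemann pair of classes need not come from a pointwise Hodge--Riemann pair of forms with $\Omega_{d-1}$ strictly positive --- is precisely the one the paper flags in Remark~\ref{rem:diffformpairisacohomologypair} and in the remarks following Proposition~\ref{prop:Bogomolov-HRpairs}.

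One refinement worth noting: you treat the passage from stable reflexive to stable locally free as ``modulo a resolution step, as in Theorem~\ref{thm:HRimpliesBG:intro}''. That step is itself part of the obstruction, not a formality. In the Schur case the resolution $\hat X\to X$ works because Schur forms of K\"ahler forms are pointwise Hodge--Riemann on \emph{any} manifold (Proposition~\ref{prop:ShurHRpair}), so after pulling back and perturbing by $\varepsilon\theta$ one still has a Hodge--Riemann pair of forms on $\hat X$. For a general pair nothing guarantees this, and the alternative --- a Bando--Siu type Hermitian--Einstein existence theorem for reflexive sheaves with respect to a merely Gauduchon metric $\omega_g=\sqrt[d-1]{\Omega_{d-1}}$ --- is not available; the paper points this out explicitly. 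So the gap lives both in representing the pair by good forms on $X$ and in pushing that representation through the reflexive step.
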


Clearly \cref{conj:HRimpliesBG} holds when $X$ is the complex projective space $\P^d$. Below we present further examples supporting the conjecture.

\begin{lemma}\label{lem:reflexivestable}
Let $X$ be a complex smooth projective variety (resp.\ compact K\"ahler manifold), and let $\eta_{d-2}\in \Amp^{d-2}(X)$ and $\eta_{d-1}\in \Amp^{d-1}(X)$ (resp.\ $\eta_{d-2}\in \cK^{d-2}(X)$ and $\eta_{d-1}\in \cK^{d-1}(X)$).

Assume $(\eta_{d-1},\eta_{d-2})$ is a Hodge-Riemann pair.  If $(\eta_{d-1},\eta_{d-2})$ is a Bogomolov pair with respect to stable reflexive sheaves then it is a Bogomolov pair (i.e.\ a Bogomolov pair with respect to semistable sheaves).
\end{lemma}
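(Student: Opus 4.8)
The plan is to reduce the general semistable case to the stable reflexive case in two standard steps: first passing from semistable sheaves to stable ones via the Jordan--Hölder filtration, and then from an arbitrary stable torsion-free sheaf to its reflexive hull.

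\emph{Step 1: reduction to stable sheaves.} Let $E$ be $\eta_{d-1}$-semistable of rank $r$. Take a Jordan--Hölder filtration $0 = E_0 \subset E_1 \subset \cdots \subset E_\ell = E$ whose graded pieces $G_i := E_i/E_{i-1}$ are $\eta_{d-1}$-stable torsion-free sheaves with $\mu_{\eta_{d-1}}(G_i) = \mu_{\eta_{d-1}}(E) =: \mu$. The key classical computation is that
$$\int_X \Delta(E)\cdot \eta_{d-2} \;=\; \sum_{i=1}^\ell \int_X \Delta(G_i)\cdot \eta_{d-2} \;+\; \sum_{i<j} \frac{r}{r_i r_j}\int_X \big(r_j c_1(G_i) - r_i c_1(G_j)\big)^2 \cdot \eta_{d-2},$$
where $r_i = \rk(G_i)$; this follows from expanding $\Delta$ in terms of $c_1$ and $c_2$, using additivity of Chern characters in short exact sequences, exactly as in \cite[Section 7.3]{HL}. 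For each $i<j$ the class $\alpha_{ij} := r_j c_1(G_i) - r_i c_1(G_j)$ satisfies $\int_X \alpha_{ij}\cdot \eta_{d-1} = r_j r_i \mu - r_i r_j \mu = 0$ (after clearing the rank denominators in the slopes), so since $(\eta_{d-1},\eta_{d-2})$ is a Hodge--Riemann pair, \cref{lem:equivalentHRpairforpositive} gives $\int_X \alpha_{ij}^2 \cdot \eta_{d-2} \le 0$; here is where the sign works in our favour, because the cross terms carry a minus sign relative to the Hodge--Riemann inequality. Hence $\int_X \Delta(E)\cdot \eta_{d-2} \ge \sum_i \int_X \Delta(G_i)\cdot\eta_{d-2}$, and it suffices to treat stable $G_i$. (There is a subtlety that Jordan--Hölder factors with respect to slope-stability need not be unique, but we only need existence of one such filtration.)

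\emph{Step 2: reduction of stable sheaves to reflexive ones.} Let $G$ be $\eta_{d-1}$-stable and torsion-free, and let $G^{\vee\vee}$ be its reflexive hull, so that $0 \to G \to G^{\vee\vee} \to T \to 0$ with $T$ supported in codimension $\ge 2$. Then $\rk G = \rk G^{\vee\vee}$ and $c_1(G) = c_1(G^{\vee\vee})$, so $\mu_{\eta_{d-1}}(G) = \mu_{\eta_{d-1}}(G^{\vee\vee})$ and $G^{\vee\vee}$ is again $\eta_{d-1}$-stable (a destabilizing subsheaf of $G^{\vee\vee}$ would intersect $G$ in a subsheaf of the same rank and at least as large first Chern class, contradicting stability of $G$). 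Comparing second Chern classes across the sequence gives $c_2(G) = c_2(G^{\vee\vee}) + [\text{codim-}2\text{ cycle of }T]$ with a sign making $\int_X \Delta(G)\cdot\eta_{d-2} \ge \int_X \Delta(G^{\vee\vee})\cdot\eta_{d-2}$, because $\eta_{d-2}\in\Amp^{d-2}(X)$ pairs nonnegatively with effective codimension-$2$ cycles (in the K\"ahler case, with the class of the support of $T$, using that $\eta_{d-2}$ lies in $\cK^{d-2}(X)$ and such classes pair nonnegatively against $\Pseff^2$). Thus $\int_X\Delta(G)\cdot\eta_{d-2}\ge\int_X\Delta(G^{\vee\vee})\cdot\eta_{d-2}\ge 0$ by hypothesis.

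\emph{Main obstacle.} The analytic bookkeeping on Chern classes of the torsion quotients $T$ (identifying the correction term with an effective cycle and checking its sign against $\eta_{d-2}$) is the place that needs the most care, especially in the K\"ahler setting where one must invoke \cref{prop:cones} to know that $\cK^{d-2}(X)$-classes pair nonnegatively with classes of analytic subsets. The Jordan--Hölder computation itself is routine once one is careful that the cross terms indeed receive the favourable sign; everything else is formal.
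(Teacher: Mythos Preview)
Your strategy is exactly the paper's: reduce from semistable to stable via equal-slope subquotients, and from stable torsion-free to reflexive via the double dual, using the Hodge--Riemann hypothesis to control the cross terms and the positivity of $\eta_{d-2}$ to control the codimension-two correction. The paper merely orders the two steps the other way round and, instead of the full Jordan--H\"older filtration, argues by induction on the rank using a single equal-slope short exact sequence $0\to F\to E\to E/F\to 0$; the content is identical.

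There is, however, a concrete error in your displayed identity. The correct formula (cf.\ \cite[Section 7.3]{HL}, or the paper's two-term version \eqref{eq:bogomolovexact}) is
\[
\Delta(E)\;=\;\sum_{i}\frac{r}{r_i}\,\Delta(G_i)\;-\;\sum_{i<j}\frac{1}{r_ir_j}\,\alpha_{ij}^{\,2},
\]
with coefficient $r/r_i$ (not $1$) on each $\Delta(G_i)$ and a \emph{minus} sign (with coefficient $1/(r_ir_j)$, not $r/(r_ir_j)$) on the cross terms. As you wrote it, with a plus sign, the Hodge--Riemann conclusion $\int_X\alpha_{ij}^2\cdot\eta_{d-2}\le 0$ would push $\int_X\Delta(E)\cdot\eta_{d-2}$ the wrong way. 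Your own remark that ``the cross terms carry a minus sign relative to the Hodge--Riemann inequality'' shows you understand the mechanism, so this is a slip in the display rather than a gap in the idea; but as written the inequality you assert does not follow from the formula you stated.
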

\begin{proof}
We first deal with the case when $E$ is an $\eta_{d-1}$-stable torsion-free sheaf on $X$. Consider the short exact sequence
\[
    0 \to E \to E^{\vee \vee} \to E^{\vee \vee}/E \to 0.
\]
The double dual $E^{\vee \vee}$ is reflexive, $\eta_{d-1}$-stable, and the support of $E^{\vee \vee}/E$ is of codimension at least $2$. Thus $c_1(E)=c_1(E^{\vee \vee})$ and $$c_2(E^{\vee \vee}) = c_2(E) + c_2(E^{\vee \vee}/E) = c_2(E) - \sum\text{length}_Z(E^{\vee \vee}/E)[Z],$$  
where the sum is taken over all irreducible components of codimension $2$ of the support of $E^{\vee \vee}/E$. So as $\eta_{d-2}$ is positive we obtain
\[
    \Delta(E) \cdot \eta_{d-2}\ge \Delta(E^{\vee \vee}) \cdot \eta_{d-2} \ge 0.
\]

Next we consider the general case when $E$ is $\eta_{d-1}$-semistable. We argue by induction on the rank of $E$. The rank $1$ case is immediate since $E$ will be stable. Suppose now that $E$ is properly $\eta_{d-1}$-semistable and let $F \subset E$ be a proper saturated subsheaf such that $\mu_{\eta_{d-1}}(F) =\mu_{\eta_{d-1}}(E)$.

Then $F$ and $E/F$ are $\mu_{\eta_{d-1}}$-semistable and setting
$$\xi := \frac{c_1(F)}{\rk(F)} - \frac{c_1(E/F)}{\rk(E/F)},$$
we have
\begin{equation}\label{eq:bogomolovexact}
    -\frac{\rk(F)\rk(E/F)}{\rk(E)}\xi^2 = \frac{\Delta(E)}{\rk(E)} - \frac{\Delta(F)}{\rk(F)} - \frac{\Delta(E/F)}{\rk(E/F)}.
\end{equation}
Then since $\xi \cdot \eta_{d-1} = 0$, we get from the Hodge-Riemann property of $(\eta_{d-1},\eta_{d-2})$ that  $-\xi^2 \cdot \eta_{d-2} \ge 0$.   Moreover, by induction $\Delta(F) \cdot \eta_{d-2} \ge 0$ and $\Delta(E/F) \cdot \eta_{d-2} \ge 0$, and the conclusion follows. 
\end{proof}

\begin{lemma}\label{lem:limitBog}
Still assume that $\eta_{d-1}\in \Amp^{d-1}(X)$ and $\eta_{d-2}\in \Amp^{d-2}(X)$.    Suppose $(\eta_{d-1},\eta_{d-2})$ is a limit of pairs $(\eta_{d-1}(\epsilon),\eta_{d-2}(\epsilon))$ as $\epsilon\to 0$ where each $(\eta_{d-1}(\epsilon),\eta_{d-2}(\epsilon))$ is both a Bogomolov and Hodge-Riemann pair.   Then $(\eta_{d-1},\eta_{d-2})$ may not be a Hodge-Riemann pair but will still be a Bogomolov pair.  
\end{lemma}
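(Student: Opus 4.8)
The plan is to follow the proof of \cref{lem:reflexivestable} with two changes: the appeal there to the Hodge--Riemann property of $(\eta_{d-1},\eta_{d-2})$ will be replaced by a weaker inequality that survives the passage to the limit, and the stable case will be obtained from \cref{lem:stabilityisopen} together with the hypothesis that each $(\eta_{d-1}(\epsilon),\eta_{d-2}(\epsilon))$ is a Bogomolov pair.

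First I would establish a limiting Hodge--Riemann inequality: for every $\xi\in N^1(X)$ with $\int_X\xi\cdot\eta_{d-1}=0$ one has $\int_X\xi^2\cdot\eta_{d-2}\le 0$. (Only the inequality is claimed, not the strict clause ``equality iff $\xi=0$''; this degeneration is precisely why $(\eta_{d-1},\eta_{d-2})$ need not be a Hodge--Riemann pair.) To prove it, fix an ample class $h$, so that $\int_X h\cdot\eta_{d-1}>0$ and $\int_X h\cdot\eta_{d-1}(\epsilon)>0$ for all small $\epsilon$, and set $t_\epsilon:=\big(\int_X\xi\cdot\eta_{d-1}(\epsilon)\big)\big/\big(\int_X h\cdot\eta_{d-1}(\epsilon)\big)$ and $\alpha_\epsilon:=\xi-t_\epsilon h$, so that $\int_X\alpha_\epsilon\cdot\eta_{d-1}(\epsilon)=0$. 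Since $(\eta_{d-1}(\epsilon),\eta_{d-2}(\epsilon))$ is a Hodge--Riemann pair with $\eta_{d-1}(\epsilon)$ positive, \cref{lem:equivalentHRpairforpositive} gives $\int_X\alpha_\epsilon^2\cdot\eta_{d-2}(\epsilon)\le 0$. As $\epsilon\to 0$ the numerator defining $t_\epsilon$ tends to $\int_X\xi\cdot\eta_{d-1}=0$ while the denominator tends to the positive number $\int_X h\cdot\eta_{d-1}$, so $t_\epsilon\to0$ and hence $\alpha_\epsilon\to\xi$; by continuity of the intersection pairing $\int_X\alpha_\epsilon^2\cdot\eta_{d-2}(\epsilon)\to\int_X\xi^2\cdot\eta_{d-2}$, which is therefore $\le 0$.

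Next I would treat the stable case: if $E$ is a torsion-free sheaf that is $\eta_{d-1}$-stable, then by \cref{lem:stabilityisopen} it is $\eta_{d-1}(\epsilon)$-stable, in particular $\eta_{d-1}(\epsilon)$-semistable, for all small $\epsilon$, so the Bogomolov pair hypothesis yields $\int_X\Delta(E)\cdot\eta_{d-2}(\epsilon)\ge 0$, and letting $\epsilon\to0$ gives $\int_X\Delta(E)\cdot\eta_{d-2}\ge0$. The general case then follows by induction on $\rk(E)$, carried out exactly as in the second half of the proof of \cref{lem:reflexivestable}: the rank-one case is stable, hence already done; and if $E$ is properly $\eta_{d-1}$-semistable one chooses a proper saturated subsheaf $F\subset E$ with $\mu_{\eta_{d-1}}(F)=\mu_{\eta_{d-1}}(E)$, so that $F$ and $E/F$ are $\eta_{d-1}$-semistable of smaller rank (hence satisfy the discriminant inequality by induction) while $\xi:=c_1(F)/\rk(F)-c_1(E/F)/\rk(E/F)$ satisfies $\int_X\xi\cdot\eta_{d-1}=0$, whence $\int_X\xi^2\cdot\eta_{d-2}\le0$ by the first step; substituting into \eqref{eq:bogomolovexact} gives $\int_X\Delta(E)\cdot\eta_{d-2}\ge0$. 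The same argument applies in the compact K\"ahler setting with $\Amp$ replaced by $\cK$.

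The proof should be short, and the one point requiring care is the first step. The natural temptation is to work with $\eta_{d-1}(\epsilon)/\eta_{d-2}(\epsilon)$, but this class may blow up as $\epsilon\to0$, precisely because $\eta_{d-2}$ need not have the Hodge--Riemann property; the remedy is instead to perturb the \emph{fixed} class $\xi$ by a multiple of a fixed ample $h$ to obtain a class that is exactly $\eta_{d-1}(\epsilon)$-orthogonal, and only then invoke the Hodge--Riemann property of the approximating pair.
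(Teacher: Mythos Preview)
Your proof is correct and follows the same approach as the paper: reduce to the stable case via the induction of \cref{lem:reflexivestable}, then invoke \cref{lem:stabilityisopen} and the Bogomolov property of the approximating pairs. The paper's proof compresses your first step into the phrase ``the proof of \cref{lem:reflexivestable} still applies,'' whereas you make explicit the limiting inequality $\int_X\xi\cdot\eta_{d-1}=0\Rightarrow\int_X\xi^2\cdot\eta_{d-2}\le 0$ (without the strict clause) that is needed for that reduction to go through; this is a welcome clarification.
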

\begin{proof}
Note first the proof of Lemma \ref{lem:reflexivestable} still applies to $(\eta_{d-1},\eta_{d-2}$) meaning it is sufficient to consider only reflexive sheaves $E$ that are $\eta_{d-1}$-stable, and such $E$ will be stable with respect to $\eta_{d-1}(\epsilon)$ for sufficiently small $\epsilon$ by Lemma \ref{lem:stabilityisopen}.  Then  $\int_X \Delta(E)\cdot \eta_{d-2}(\epsilon)\ge 0$ and we can let $\epsilon\to 0$.

See \cref{ex:nonHRlimit} for a situation where a limit of Hodge-Riemann pairs is not a Hodge-Riemann pair.
\end{proof}

\begin{remark}\label{rem:OpennessBogomolov}
 Let $X$ be a complex smooth projective variety (resp.\ compact K\"ahler manifold). 
 Let $\eta_{d-2}\in \Amp^{d-2}(X)$ and $\eta_{d-1}\in \Amp^{d-1}(X)$ (resp.\ $\eta_{d-2}\in \cK^{d-2}(X)$ and $\eta_{d-1}\in \cK^{d-1}(X)$) such that $(\eta_{d-1},\eta_{d-2})$ is a Hodge-Riemann pair.
 If $(\eta_{d-1},\eta_{d-2})$ is a Bogomolov pair and  $(\eta',\eta_{d-2})$ is some other Hodge-Riemann pair, then $(\eta',\eta_{d-2})$ is also a Bogomolov pair.
\end{remark}
\begin{proof}
     Let $E$ be a semistable torsion-free sheaf with respect to $\eta'$. We argue by induction on the rank $r$ of $E$. If $r=1$ the assertion is clear. Take now $r>1$. 
    By hypothesis the classes $\eta_{d-1}/\eta_{d-2}$ and $\eta'/\eta_{d-2}$ belong to the quadratic cone $\Pos_{\eta_{d-2}}(X)$ and thus the entire segment $[\eta_{d-1}/\eta_{d-2},\eta'/\eta_{d-2}]$ is included in $\Pos_{\eta_{d-2}}(X)$.
    If $E$ is also semistable with respect to $\eta_{d-1}$ we have  $\int_X \Delta(E)\cdot \eta_{d-2}\ge 0$ since $(\eta_{d-1},\eta_{d-2})$ is a Bogomolov pair. Otherwise there exists a class $\alpha\in[\eta_{d-1}/\eta_{d-2},\eta'/\eta_{d-2}]$ such that  $E$ is properly semistable with respect to $\alpha\eta_{d-2}$. We apply now the formula \eqref{eq:bogomolovexact} and the same argument as in the second part of the proof of  \cref{lem:reflexivestable} where stability is now taken with respect to  $\alpha\eta_{d-2}$.
\end{proof}

\begin{proposition}\label{prop:complete_intersections}
    Let $X$ be a complex smooth projective variety. Then for any classes $$\alpha_1,\ldots,\alpha_{d-2}\in\Amp^1(X) \text{ and } \eta\in(\alpha_1\cdots\alpha_{d-2}\Pos_{\alpha_1\cdots \alpha_{d-2}}(X))\cap\Amp^{d-1}(X)$$ the pair $(\eta,\alpha_1 \cdots \alpha_{d-2})$ is a Bogomolov pair. In other words, Conjecture \ref{conj:HRimpliesBG} holds when $\eta_{d-2}$ is a product of classes in $\Amp^1(X)$, and in particular when $X$ is a projective threefold.
\end{proposition}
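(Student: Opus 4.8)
The plan is to reduce the statement, in three steps, to the classical Bogomolov inequality on a surface cut out as a complete intersection.

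\emph{Step 1: reduce to a product of ample classes.} Set $\eta_{d-2} := \alpha_1\cdots\alpha_{d-2}$. By \cite{DinhNguyen06} this class has the Hodge--Riemann property, so the hypothesis $\eta\in\eta_{d-2}\Pos_{\eta_{d-2}}(X)$ together with $\eta\in\Amp^{d-1}(X)$ is precisely the assertion that $(\eta,\eta_{d-2})$ is a Hodge--Riemann pair. Now fix any $\alpha_{d-1}\in\Amp^1(X)$ and put $\eta' := \alpha_1\cdots\alpha_{d-1}$. A product of ample classes is again an ample class (it pairs strictly positively with every nonzero pseudoeffective divisor class), so $\eta'\in\Amp^{d-1}(X)$; and since $\eta'/\eta_{d-2} = \alpha_{d-1}$, one checks directly from \cref{def:algebraicHRpair} that $(\eta',\eta_{d-2})$ is also a Hodge--Riemann pair. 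By \cref{rem:OpennessBogomolov} it is therefore enough to show that $(\eta',\eta_{d-2})$ is a Bogomolov pair. Thus we may assume from now on that $\eta = \alpha_1\cdots\alpha_{d-1}$ with all $\alpha_i\in\Amp^1(X)$.

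\emph{Step 2: reduce to rational very ample polarizations and stable reflexive sheaves.} By \cref{lem:reflexivestable} it suffices to prove $\int_X\Delta(E)\cdot\eta_{d-2}\ge 0$ for every reflexive sheaf $E$ that is stable with respect to $\eta$. For such an $E$, \cref{lem:stabilityisopen} guarantees that $E$ stays stable with respect to $\alpha_1'\cdots\alpha_{d-1}'$ whenever the $\alpha_i'\in\Amp^1(X)$ are rational and close enough to the $\alpha_i$; since $\int_X\Delta(E)\cdot\alpha_1'\cdots\alpha_{d-2}'\to\int_X\Delta(E)\cdot\eta_{d-2}$, it is enough to treat these rational approximations. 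Clearing denominators multiplies the slope of every subsheaf, and also $\int_X\Delta(E)\cdot\alpha_1'\cdots\alpha_{d-2}'$, by a common positive constant, so we may assume $E$ is reflexive and stable with respect to $H_1\cdots H_{d-1}$ for very ample integral classes $H_1,\dots,H_{d-1}$; the goal becomes $\int_X\Delta(E)\cdot H_1\cdots H_{d-2}\ge 0$.

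\emph{Step 3: restrict to a complete intersection surface.} Choose successively generic divisors $D_i\in|m_iH_i|$ with $m_i\gg 0$, for $i=1,\dots,d-2$, and set $S := D_1\cap\dots\cap D_{d-2}$, a smooth projective surface. Since $E$ is locally free outside a closed set of codimension $\ge 3$, the general such $S$ misses it, so $E|_S$ is a vector bundle with $\Delta(E|_S) = \Delta(E)|_S$. By a Mehta--Ramanathan type restriction theorem, applied one factor at a time and using the boundedness of the relevant families of torsion-free quotients from \cite[Theorem 3.1]{MegyPavelToma} at each stage, the bundle $E|_S$ is semistable with respect to $H_{d-1}|_S$. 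The classical Bogomolov inequality on the surface $S$ (\cite{HL}) then gives $\int_S\Delta(E|_S)\ge 0$, and since $[S] = (m_1H_1)\cdots(m_{d-2}H_{d-2})$ in $N^{d-2}(X)$ this reads $\big(\textstyle\prod_i m_i\big)\int_X\Delta(E)\cdot H_1\cdots H_{d-2}\ge 0$, as required. Feeding this back through \cref{rem:OpennessBogomolov} finishes the proof; taking $d=3$ gives the threefold case, as then $\eta_{d-2}\in\Amp^1(X)$ is trivially a one-term product.

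The main obstacle is the restriction theorem used in Step 3: it is \emph{not} the classical Mehta--Ramanathan statement, because the polarization $H_1\cdots H_{d-1}$ controlling stability of $E$ on $X$ mixes all the $H_i$, whereas we cut $X$ down by the single linear systems $|m_iH_i|$ one at a time. Already for $d=3$ this asserts that an $\alpha_1\alpha_2$-semistable sheaf on $X$ restricts to an $\alpha_2|_D$-semistable sheaf on a general $D\in|m\alpha_1|$ with $m\gg 0$. Its proof is the standard one (a subsheaf destabilizing $E|_S$ would, by boundedness, spread out to one destabilizing $E$), but making the $(d-2)$-fold iteration precise — at each stage only finitely many genericity conditions on $D_i$ and a large enough $m_i$ are needed, yet the thresholds depend on the earlier choices — is where the real work lies. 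A minor bookkeeping point is to check that products of ample classes lie in $\Amp^{d-1}(X)$ resp.\ $\Amp^{d-2}(X)$ and that $(\alpha_1\cdots\alpha_{d-1},\eta_{d-2})$ is a Hodge--Riemann pair, which is what legitimizes the appeals to \cref{rem:OpennessBogomolov} and \cref{lem:reflexivestable}.
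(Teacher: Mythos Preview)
Your approach is correct and follows essentially the same route as the paper: reduce to rational ample classes, then invoke a restriction-to-surface argument together with the classical Bogomolov inequality. The paper's proof is much terser: for rational $\alpha_i$ it simply cites \cite[Theorem~3.4]{Langer} as a black box, and then passes to arbitrary $\alpha_i$ via \cref{lem:limitBog}. Your Steps~2--3 are an unpacking of what Langer's theorem supplies (a multi-polarization Mehta--Ramanathan restriction theorem followed by the surface Bogomolov inequality), and you honestly flag the restriction step as the substantive one---this is exactly the content that the paper outsources to the citation. Your Step~1, reducing an arbitrary $\eta\in\eta_{d-2}\Pos_{\eta_{d-2}}(X)$ to a product $\alpha_1\cdots\alpha_{d-1}$ via \cref{rem:OpennessBogomolov}, does not appear explicitly in the paper's proof; it is either absorbed into Langer's statement or into the combination of \cref{rem:OpennessBogomolov} with the citation.
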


\begin{proof}

When the classes $\alpha_1,\ldots,\alpha_{d-2}\in\Amp^1(X)$ are rational, the result is a consequence of \cite[Theorem 3.4]{Langer}. In general, we can approximate the classes $\alpha_i$ by rational ones and use \cref{lem:limitBog}.
\end{proof}

We now turn to corresponding definitions for differential forms:

\begin{definition}
Let $E$ be a torsion free sheaf.  If $h$ is a hermitian metric on the locally free locus of $E$ the 
 \emph{discriminant} of $(E,h)$ is the form
$$\Delta(E,h) = 2\rk(E) c_2(E,h)  - (\rk(E)-1) c_1(E,h)^2$$
where $c_i(E,h)$ denotes the $i$-th Chern form of $h$.
\end{definition}

\begin{definition}[Bogomolov pairs of differential forms]\label{def:bogomolovpairsofdifferentialforms}
 Let $X$ be a compact complex manifold of dimension $d$ and let $\Omega_{d-1}$, $\Omega_{d-2}$ be $\partial \bar{\partial}$-closed forms of types $(d-1,d-1)$ and $(d-2,d-2)$ respectively, and assume $\Omega_{d-2}$ is strictly weakly positive. 

    We say $(\Omega_{d-1},\Omega_{d-2})$ is a \emph{Bogomolov pair for stable vector bundles} if whenever $h$ is a hermitian metric on a locally free sheaf $E$ that satisfies the weak Hermitian-Einstein equation
    \begin{equation}i F_h \wedge \Omega_{d-1} = f \Omega_d \Id_E\label{eq:weakHE}\end{equation}
    where $F_h$ is the curvature of the Chern connection associated to $h$, $\Omega_d$ is a volume form and $f\in C^{\infty}(X)$, it holds that $$\Delta(E,h)\wedge \Omega_{d-2}\ge 0$$ pointwise over $X$. 
In a similar way we define {\em Bogomolov pairs for {stable} reflexive sheaves} if the above condition holds for all reflexive sheaves endowed with Hermitian-Einstein metrics which are {\em admissible} in the sense of \cite[Definition]{BandoSiu}. 
    
\end{definition}

\begin{remark}
\begin{enumerate}
\item Observe that if \eqref{eq:weakHE} holds then taking the trace and integrating we necessarily have
$$\int_X f\Omega_d = \frac{1}{r} \int_X c_1(E)\cdot [\Omega_{d-1}].$$
\item Assume that $\Omega_{d-1}$ is strictly  positive, and let $\omega = \sqrt[d-1]{\Omega_{d-1}}$ which is Gauduchon.    It turns out that if $E$ admits a weakly Hermitian-Einstein metric then after a conformal change one can find a hermitian metric that is Hermitian-Einstein (i.e. $iF_h\wedge \Omega_{d-1} = c\omega^d \Id$ where $c$ is constant over $X$) (apply \cite[2.1.5]{LuTe} with respect to the Guaduchon metric $\omega = \sqrt[d-1]{\Omega_{d-1}}$). 
\end{enumerate}
\end{remark}

\begin{remark}
Similarly one could make a definition of a Bogomolov pair of forms for semistable vector bundles using approximate Hermitian-Einstein metrics \cite[Chapter 6]{KobayashiVectorBundles}.  One can presumably also combine these to define a notion of Bogomolov pair for semistable reflexive sheaves using ``admissible approximate Hermitian-Einstein metrics'', but we are not aware of any work in which this has been considered even in the case $(\Omega_{d-1},\Omega_{d-2}) = (\omega^{d-1},\omega^{d-2})$ where $\omega$ is a K\"ahler form.
\end{remark}

A connection between Bogomolov pairs and Hodge-Riemann pairs of differential \textit{}forms is given by the following.

\begin{proposition}\label{prop:Bogomolov-HRpairs}
Let $(\Omega_{d-1},\Omega_{d-2})$ be a Hodge-Riemann pair.  Then
\begin{enumerate}
\item $(\Omega_{d-1},\Omega_{d-2})$ is a Bogomolov pair with respect to stable vector bundles.
\item Assume also that $\Omega_{d-1}$ is  strictly  positive.  Then $([\Omega_{d-1}],[\Omega_{d-2}])$ is a Bogomolov pair with respect to stable vector bundles. 
\item If moreover $\Omega_{d-1}=\omega^{d-1}$ for some K\"ahler form $\omega$ on $X$, then
 $(\Omega_{d-1},\Omega_{d-2})$ is a Bogomolov pair for stable reflexive sheaves and
$([\Omega_{d-1}],[\Omega_{d-2}])$ is a Bogomolov pair of classes.
\end{enumerate}
\end{proposition}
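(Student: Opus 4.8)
The plan is to prove the three parts in order, with each part building on the previous one.

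For part (1), the key input is the pointwise Kobayashi--L\"ubke computation for the discriminant of a Hermitian--Einstein metric. Suppose $h$ is a hermitian metric on a locally free sheaf $E$ of rank $r$ satisfying the weak Hermitian--Einstein equation $iF_h\wedge\Omega_{d-1}=f\Omega_d\Id_E$. Fix a point $x\in X$. Since $(\Omega_{d-1},\Omega_{d-2})$ is a Hodge--Riemann pair, by Definition \ref{def:analyticHRpair} there is a strictly weakly positive $(1,1)$-form $\omega$ (which we may take at $x$ to be a standard K\"ahler form for suitable coordinates) with respect to which $\Omega_{d-2}(x)$ has the pointwise Hodge--Riemann property. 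I would first observe that the trace-free part of the curvature, $\mathring F_h := F_h - \tfrac1r(\tr F_h)\Id_E$, satisfies $i\mathring F_h\wedge\Omega_{d-1}=0$ at $x$, i.e. each matrix entry $\alpha$ of $i\mathring F_h$ lies in the kernel of $\tau\mapsto\tau\wedge\Omega_{d-1}(x)$. The Hodge--Riemann pair condition via Lemma \ref{lem:analyticequivalentHRpairforpositive} — more precisely the inequality $\alpha^2\wedge\Omega_{d-2}\le 0$ for $\alpha$ with $\alpha\wedge\Omega_{d-1}=0$ — then controls the quadratic expression in the curvature that appears in $\Delta(E,h)\wedge\Omega_{d-2}$. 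Expanding $\Delta(E,h)=2rc_2(E,h)-(r-1)c_1(E,h)^2$ in terms of $F_h$, splitting off the trace part, and using that the $\Omega_{d-1}$-trace of $\mathring F_h$ vanishes, one is left with a sum of terms of the form $-\operatorname{tr}(\mathring F_h\wedge\mathring F_h)\wedge\Omega_{d-2}$; the negativity built into the Hodge--Riemann inequality (applied to each eigen-direction, or more invariantly to the Hermitian form $\alpha\mapsto\alpha^2\wedge\Omega_{d-2}$ restricted to the $\Omega_{d-1}$-primitive space) forces $\Delta(E,h)\wedge\Omega_{d-2}\ge 0$ pointwise. This is exactly the Kobayashi--L\"ubke argument adapted to the form $\Omega_{d-1}$ in place of $\omega^{d-1}$, and it is the step I expect to require the most care.

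For part (2), assume in addition that $\Omega_{d-1}$ is strictly positive. The point is to pass from the cohomology-level stability hypothesis to the existence of a metric to which part (1) applies. Let $E$ be a vector bundle that is stable with respect to the class $[\Omega_{d-1}]$. Set $\omega:=\sqrt[d-1]{\Omega_{d-1}}$, which is a Gauduchon metric. By the Hitchin--Kobayashi correspondence in the Gauduchon setting (Li--Yau, cf. the remark following Definition \ref{def:bogomolovpairsofdifferentialforms}), $E$ admits a Hermitian--Einstein metric $h$ with respect to $\Omega_{d-1}$, i.e. $iF_h\wedge\Omega_{d-1}=c\,\omega^d\Id_E$ with $c$ constant. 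This $h$ satisfies the weak Hermitian--Einstein equation \eqref{eq:weakHE}, so by part (1) we get $\Delta(E,h)\wedge\Omega_{d-2}\ge 0$ pointwise; integrating over $X$ gives $\int_X\Delta(E)\cdot[\Omega_{d-2}]\ge 0$, which is the defining inequality for $([\Omega_{d-1}],[\Omega_{d-2}])$ to be a Bogomolov pair with respect to stable vector bundles.

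For part (3), suppose further that $\Omega_{d-1}=\omega^{d-1}$ for a K\"ahler form $\omega$. Now $\Omega_{d-1}$ is the genuine $(d-1)$-st power of a K\"ahler form, so the full Bando--Siu theory of admissible Hermitian--Einstein metrics on stable reflexive sheaves applies: any $\omega$-stable reflexive sheaf $E$ carries an admissible Hermitian--Einstein metric $h$ on its locally free locus, satisfying \eqref{eq:weakHE} there. Applying part (1) pointwise on the locally free locus gives $\Delta(E,h)\wedge\Omega_{d-2}\ge 0$ there, which is exactly the condition for $(\Omega_{d-1},\Omega_{d-2})$ to be a Bogomolov pair for stable reflexive sheaves. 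Finally, to get the statement for classes (with respect to semistable torsion-free sheaves), I would integrate: for $E$ stable and reflexive, $\int_X\Delta(E)\cdot[\Omega_{d-2}]\ge 0$ follows by integrating the pointwise inequality (the singular set has codimension $\ge 3$ so does not affect the integral — or invoke the Bando--Siu Chern--Weil formula directly); then since $([\Omega_{d-1}],[\Omega_{d-2}])$ is a Hodge--Riemann pair of classes by Remark \ref{rem:diffformpairisacohomologypair}, Lemma \ref{lem:reflexivestable} upgrades the Bogomolov property from stable reflexive sheaves to all semistable torsion-free sheaves, giving that $([\Omega_{d-1}],[\Omega_{d-2}])$ is a Bogomolov pair of classes. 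The main obstacle throughout is part (1): making the Kobayashi--L\"ubke pointwise algebra work with a general strictly weakly positive $\Omega_{d-2}$ and non-power $\Omega_{d-1}$, keeping track of the primitive decomposition adapted to $\Omega_{d-1}$ rather than to a fixed K\"ahler form.
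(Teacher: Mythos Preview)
Your proposal is correct and follows essentially the same approach as the paper: for part (1) the paper also passes to the trace-free curvature $F_0$, uses $\Delta(E,h)=\tfrac{r}{4\pi^2}\tr(F_0^2)$, and applies the Hodge--Riemann inequality entrywise---it is slightly more explicit than your sketch in that it writes $F_0$ in an $h$-unitary frame and splits the off-diagonal entries into real and imaginary parts to reduce to real $(1,1)$-forms; parts (2) and (3) are handled exactly as you describe via Li--Yau and Bando--Siu respectively, with the upgrade to semistable sheaves in (3) coming from Lemma~\ref{lem:reflexivestable}.
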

\begin{proof}

Suppose $h$ is any weakly Hermitian-Einstein metric, so satisfying \eqref{eq:weakHE}.  We set 
\[
    F_0 := F_h - \frac{1}{r}\tr(F_h)\cdot \Id_E.
\]
The Hermitian-Einstein condition translates to
\begin{equation}\label{eq:HEcond}
     F_0 \wedge \Omega_{d-1} = 0.
\end{equation}
A direct computation shows that
\[
    \Delta(E,h) :=2r c_2(E,h)  - (r-1) c_1(E,h)^2= \frac{r}{4\pi^2}\tr(F_0^2),
\]
see proof of \cite[Theorem 2.2.3]{LuTe}. Pointwise we get
\[
    \Delta(E,h) \wedge \Omega_{d-2} = \frac{r}{4\pi^2} \sum_{i=1}^r\sum_{j=1}^r F_{0,ij} \wedge F_{0,ji} \wedge \Omega_{d-2},
\]
where $(F_{0,ij})$ is the matrix corresponding to $F_0$ with respect to an $h$-unitary basis. Each term of the above sum is non-negative. Indeed, for $i = j$, $F_{0,jj}$ is purely imaginary since the matrix $(F_{0,ij})$ is anti-selfadjoint, it satisfies equation \eqref{eq:HEcond} and thus the Hodge-Riemann property of the pair $(\Omega_{d-1},\Omega_{d-2})$ gives $F_{0,jj}^2 \wedge \Omega_{d-2} \ge 0$. For $i \neq j$, we write $F_{0,ij} = \alpha + i\beta$ with $\alpha$ and $\beta$ real $(1,1)$-forms, and we get
\[
    F_{0,ij} \wedge F_{0,ji} \wedge \Omega_{d-2} = -(\alpha + i\beta) \wedge (\alpha - i\beta) \wedge \Omega_{d-2} = -(\alpha^2 + \beta^2) \wedge \Omega_{d-2} \ge 0
\]
by the same argument.  This proves 
\begin{equation}\Delta(E,h)\wedge \Omega_{d-2}\ge 0\label{eq:discriminantpositivepointwise}
\end{equation}
pointwise over $X$ which is what we wanted.

The second statement follows from the Hitchin-Kobayashi correspondence \cite{LiYau}, which says that if $E$ is a vector bundle that is stable with respect to $[\Omega_{d-1}]$ then it admits a Hermitian-Einstein metric with respect to the Gauduchon metric $\omega= \sqrt[d-1]{\Omega_{d-1}}$. Integrating \eqref{eq:discriminantpositivepointwise} over $X$ gives the result we want.

The same argument applies to the third statement. The Hitchin-Kobayashi correspondence holds also in this situation by work of Bando and Siu \cite{BandoSiu}.
\end{proof}
\begin{corollary}\label{cor:tori}  
    Conjecture \ref{conj:HRimpliesBG} holds for complex tori.
\end{corollary}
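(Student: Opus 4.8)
The plan is to reduce the case of complex tori to the pointwise results on Hodge-Riemann pairs of differential forms established in \cref{prop:ShurHRpair} and \cref{prop:Bogomolov-HRpairs}, using the translation-invariance of everything in sight. So let $X = V/\Lambda$ be a complex torus of dimension $d$, and suppose $(\eta_{d-1},\eta_{d-2})$ is a Hodge-Riemann pair with $\eta_{d-1}\in\cK^{d-1}(X)$. The first step is to observe that every K\"ahler cohomology class on a complex torus is represented by a translation-invariant K\"ahler form, and more generally every class in $\cK^p(X)$ — in particular $\eta_{d-2}$ — is represented by a translation-invariant $\partial\bar\partial$-closed strictly weakly positive $(p,p)$-form, since averaging over the compact group $X$ preserves positivity and cohomology class while producing an invariant representative (invariant forms being harmonic, this representative is the unique harmonic one). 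The same applies to $\eta_{d-1}$. Thus we may choose invariant representatives $\Omega_{d-1}$, $\Omega_{d-2}$ of $\eta_{d-1}$, $\eta_{d-2}$.

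The second step is to upgrade the cohomological Hodge-Riemann property to the pointwise one. Because all the forms $\Omega_{d-2}$, $\omega$ (a translation-invariant K\"ahler form), and any invariant $\alpha$ are constant in the natural flat coordinates, the pointwise bilinear form $Q$ on $\Lambda^{1,1}T_x^*$ defined by $\Omega_{d-2}(x)$ is the \emph{same} at every point, and evaluating the cohomological integrals $\int_X \alpha\cdot\eta_{d-2}\cdot h$ and $\int_X\alpha^2\cdot\eta_{d-2}$ on invariant classes amounts (up to the positive volume factor) to evaluating $Q$ at a point. Hence the cohomological Hodge-Riemann property of $(\eta_{d-1},\eta_{d-2})$, restricted to invariant $(1,1)$-classes — which are all of $H^{1,1}(X)$ for a torus — translates directly into the pointwise Hodge-Riemann property of $(\Omega_{d-1},\Omega_{d-2})$ in the sense of \cref{def:analyticHRpair}. (One should check the three numbered conditions of \cref{def:analyticHRpair}: condition (1) is the signature statement just discussed, and conditions (2) and (3) likewise pass from the integral inequalities to pointwise inequalities because integrands are constant.)

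The third step applies \cref{prop:Bogomolov-HRpairs}(1)–(2): since $(\Omega_{d-1},\Omega_{d-2})$ is a Hodge-Riemann pair of forms and $\Omega_{d-1}$ is strictly positive (being a positive multiple of $\omega^{d-1}$-type invariant form, or at least strictly weakly positive of top-minus-one degree, hence strictly positive), the pair $([\Omega_{d-1}],[\Omega_{d-2}]) = (\eta_{d-1},\eta_{d-2})$ is a Bogomolov pair with respect to stable vector bundles. Finally, to pass from stable vector bundles to all semistable torsion-free sheaves one invokes \cref{lem:reflexivestable} (whose hypotheses are met since $(\eta_{d-1},\eta_{d-2})$ is a Hodge-Riemann pair with both classes ample): it suffices to handle stable reflexive sheaves. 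On a torus one can either quote the Bando–Siu correspondence for admissible Hermitian-Einstein metrics as in \cref{prop:Bogomolov-HRpairs}(3) — noting that $\Omega_{d-1}$ need not be literally $\omega^{d-1}$, so one may first need to reduce to that case — or, more simply, use that a reflexive sheaf on a smooth surface is locally free and bootstrap via hyperplane sections; the cleanest route is to note that \cref{prop:ShurHRpair} already produces Hodge-Riemann \emph{forms} and that the classes $\omega^{d-1}$, $\omega^{d-2}$ of an invariant K\"ahler form are among our pairs, so \cref{prop:Bogomolov-HRpairs}(3) applies to that reference pair, and then \cref{rem:OpennessBogomolov} (fixing $\eta_{d-2}$ and varying $\eta_{d-1}$) together with \cref{lem:limitBog} propagates the Bogomolov property to the whole family of Hodge-Riemann pairs with that fixed $\eta_{d-2}$. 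I expect the main obstacle to be the passage from stable vector bundles to stable reflexive sheaves when $\Omega_{d-1}$ is not of the form $\omega^{d-1}$: one must either invoke Bando–Siu carefully after a conformal/Gauduchon reduction, or arrange the argument so that \cref{lem:reflexivestable} alone suffices — which it does, since that lemma reduces semistable sheaves to stable \emph{reflexive} ones, and a reflexive sheaf that is stable is in particular handled by part (3) once one knows $\Omega_{d-1} = \omega^{d-1}$ up to the freedom exploited via \cref{rem:OpennessBogomolov}.
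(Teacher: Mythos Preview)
Your first two steps are correct and match the paper's argument: choose translation-invariant representatives $\Omega_{d-1},\Omega_{d-2}$, and observe that on a torus the cohomological Hodge-Riemann condition becomes the pointwise one because invariant forms are constant and $H^{1,1}(X)\cong \Lambda^{1,1}V^*$.

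The gap is in your third step. You correctly identify the difficulty: \cref{prop:Bogomolov-HRpairs}(2) only gives the Bogomolov inequality for stable \emph{vector bundles}, and \cref{lem:reflexivestable} reduces to stable \emph{reflexive} sheaves, not to bundles. Your proposed workarounds do not close this gap. Bando--Siu for Gauduchon (non-K\"ahler) metrics is precisely what the paper says is \emph{not} available (see the remark following \cref{cor:tori}). And your \cref{rem:OpennessBogomolov} argument does not apply: that remark lets you vary $\eta_{d-1}$ while keeping $\eta_{d-2}$ \emph{fixed}, so you would need a seed Bogomolov pair with second component equal to your given $\eta_{d-2}$, whereas your proposed reference $(\omega^{d-1},\omega^{d-2})$ has a different second component.

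What you are missing is a one-line linear algebra fact that dissolves the problem entirely: the map $\omega\mapsto\omega^{d-1}$ is a bijection from strictly positive $(1,1)$-forms to strictly positive $(d-1,d-1)$-forms on a $d$-dimensional complex vector space. Hence your translation-invariant $\Omega_{d-1}$ \emph{is} literally $\omega^{d-1}$ for a translation-invariant positive $(1,1)$-form $\omega$; being constant, $\omega$ is closed, hence K\"ahler. Now \cref{prop:Bogomolov-HRpairs}(3) applies directly (Bando--Siu with respect to the honest K\"ahler metric $\omega$), giving the Bogomolov inequality for stable reflexive sheaves, and \cref{lem:reflexivestable} finishes the job. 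This is exactly the paper's proof.
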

\begin{proof}
    If $X=\C^d/\Gamma$ is a complex torus and $(\eta_{d-1},\eta_{d-2})$ is a Hodge-Riemann pair on $X$ with $\eta_{d-1}\in\cK^{d-1}(X)$, then we may choose translation invariant representatives $\Omega_{d-1}$, $\Omega_{d-2}$  of $\eta_{d-1}$, $\eta_{d-2}$. It is clear that these representatives have the required pointwise positivity properties and in particular that there exists a translation invariant positive $(d-1)$-root $\omega$ of $\Omega_{d-1}$ which is therefore a K\"ahler form on $X$. We apply now Proposition \ref{prop:Bogomolov-HRpairs}. 
\end{proof}

\begin{remark}\label{rem:projectively-flat}
Let $(\Omega_{d-1},\Omega_{d-2})$ be a Hodge-Riemann pair of forms and let $E$ be a locally free sheaf on $X$ admitting a weakly Hermitian-Einstein metric $h$ with respect to $\Omega_{d-1}$. If one has equality in 
\[
    \Delta(E,h)\wedge \Omega_{d-2}\ge 0,
\]
then the Hodge-Riemann property of $(\Omega_{d-1},\Omega_{d-2})$ yields that the trace-free part $F_0$ of the Chern curvature of $(E,h)$ vanishes, and thus $\Delta(E,h) = 0$ and 
\begin{equation}\label{eq:projFlat}
      F_h = \frac{1}{r}\tr(F_h)\Id_E.
\end{equation}
This means that $(E,h)$ is projectively flat, or equivalently, $\P(E) \to X$ is induced by a unitary representation $\pi_1(X) \to \mathrm{PU}(r)$ (see \cite[Proposition 1.4.22]{KobayashiVectorBundles}). 

From $\eqref{eq:projFlat}$ one obtains that $(E,h)$ satisfies the weak Hermitian-Einstein condition \eqref{eq:weakHE} with respect to any $\partial \bar{\partial}$-closed strictly  positive form $\Omega_{d-1}'$ of type $(d-1,d-1)$. Thus, by the Hitchin-Kobayashi correspondence, $E$ is also $[\Omega'_{d-1}]$-polystable (see \cite[Theorem 2.3.2]{LuTe}). 

More generally, one may ask the following:
\begin{enumerate}
    \item[(Q)] Given a Hodge-Riemann pair $(\eta_{d-1},\eta_{d-2})$ of classes and an $\eta_{d-1}$-stable locally free sheaf $E$ such that $\Delta(E) \cdot \eta_{d-2} = 0$, then is $E$ projectively flat?
\end{enumerate}
For Hodge-Riemann pairs of the form $(\omega^{d-1},\omega^{d-2})$, with $\omega \in \cK^1(X)$, the answer is known via the Hitchin-Kobayashi correspondence. Therefore the statement holds when $X$ is a surface, however it remains open in higher dimensions.  
\end{remark}

\begin{remark}
We think that if $(\Omega_{d-1},\Omega_{d-2})$ is a Hodge-Riemann pair of differential forms on a compact K\"ahler manifold $X$ with $\Omega_{d-1}$ strictly positive we could deduce that $([\Omega_{d-1}],[\Omega_{d-2}])$ is a Bogomolov pair from a version of the proof of \cref{prop:Bogomolov-HRpairs} if we had a stronger form of the Hitchin-Kobayashi correspondence for reflexive sheaves.  One would need the existence of an admissible Hermitian-Einstein metric with respect to the Gauduchon metric $\omega = \sqrt[d-1]{\Omega_{d-1}}$ on any $[\Omega_{d-1}]$-stable reflexive sheaf.  Even then, this would not fully prove Conjecture \ref{conj:HRimpliesBG} since not every Hodge-Riemann pair $(\eta_{d-1},\eta_{d-2})$ can be written this way.
\end{remark}

\begin{example}\label{ex:nonHRlimit}
    In the situation of \cref{ex:delv} consider the pairs $(h^3,\eta+\epsilon h^2)$. We can check by a direct computation that they are Hodge-Riemann pairs with respect to $H^{1,1}(X)$ for $\epsilon>0$. Therefore they are Bogomolov pairs for $\epsilon\ge0$ by \cref{cor:tori} and \cref{lem:limitBog}. However the limit pair $(h^3,\eta)$ is not a Hodge-Riemann pair with respect to $H^{1,1}(X)$ as we know from \cref{ex:delv}. 
\end{example}

\section{Schur polynomials of K\"ahler classes}\label{sec:SchurPoly}

The following is a variant of a result of Chen \cite{Chen}.

\begin{theorem}\label{thm:HRimpliesBG}
Suppose $X$ is compact of dimension $d$.  Let $\alpha_1,\ldots,\alpha_e$ be K\"ahler classes on $X$ with $e\ge d-1$.  Let $\lambda$ be a partition of length $d-1$.  Then 
$$(s_{\lambda}(\alpha_1,\ldots,\alpha_e),s'_{\lambda}(\alpha_1,\ldots,\alpha_e))$$
is a Bogomolov pair. 
\end{theorem}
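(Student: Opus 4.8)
The plan is to combine the pointwise Hodge-Riemann property of Schur forms of Kähler classes (\cref{prop:ShurHRpair}) with the analytic machinery of \cref{prop:Bogomolov-HRpairs}, together with the reduction from semistable sheaves to stable reflexive sheaves. The first observation is that $s_\lambda(\alpha_1,\dots,\alpha_e)$ is represented by a strictly positive $(d-1,d-1)$-form: choose Kähler forms $\omega_i\in\alpha_i$ and set $\Omega_{d-1}:=s_\lambda(\omega_1,\dots,\omega_e)$, $\Omega_{d-2}:=s'_\lambda(\omega_1,\dots,\omega_e)$. By \cref{prop:ShurHRpair} the pair $(\Omega_{d-1},\Omega_{d-2})$ is a pointwise Hodge-Riemann pair of forms, and $\Omega_{d-1}$ is strictly positive since it is a Schur class of Kähler forms (it restricts positively to every $(d-1)$-dimensional subspace, hence is a strictly positive $(d-1,d-1)$-form in the top-but-one degree where weak and strong positivity coincide). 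Thus $\omega:=\sqrt[d-1]{\Omega_{d-1}}$ is a Gauduchon metric on $X$, and by \cref{rem:diffformpairisacohomologypair} (valid since $X$ is Kähler — but note we do not even need this, we argue directly with forms) the cohomology classes $[\Omega_{d-1}]=s_\lambda(\alpha_1,\dots,\alpha_e)$ and $[\Omega_{d-2}]=s'_\lambda(\alpha_1,\dots,\alpha_e)$ are what we must handle.

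Next I would reduce to stable reflexive sheaves. By \cref{prop:ShurHRpair:intro}/\cref{prop:ShurHRpair}, the pair $(s_\lambda(\alpha_1,\dots,\alpha_e),s'_\lambda(\alpha_1,\dots,\alpha_e))$ is a Hodge-Riemann pair of cohomology classes, and $s_\lambda(\alpha_1,\dots,\alpha_e)\in\cK^{d-1}(X)$ (being the class of the strictly positive form $\Omega_{d-1}$), while $s'_\lambda(\alpha_1,\dots,\alpha_e)\in\cK^{d-2}(X)$. Hence \cref{lem:reflexivestable} applies: it suffices to prove $\int_X\Delta(E)\cdot s'_\lambda(\alpha_1,\dots,\alpha_e)\ge 0$ for every $E$ that is stable reflexive with respect to $s_\lambda(\alpha_1,\dots,\alpha_e)$. (The induction-on-rank and double-dual arguments in that lemma only use positivity of $\eta_{d-2}$ and the Hodge-Riemann property of the pair, both of which we have.)

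For a stable reflexive sheaf $E$, I would invoke the Bando–Siu existence theorem: $E$ admits an admissible Hermitian–Einstein metric $h$ with respect to the Gauduchon metric $\omega=\sqrt[d-1]{\Omega_{d-1}}$, i.e. a metric satisfying the weak Hermitian–Einstein equation $iF_h\wedge\Omega_{d-1}=f\,\Omega_d\,\Id_E$ on the locally free locus (this is exactly the setup of \cref{def:bogomolovpairsofdifferentialforms} for reflexive sheaves). Now I would run the Kobayashi–Lübke pointwise computation exactly as in the proof of \cref{prop:Bogomolov-HRpairs}(1): writing $F_0=F_h-\frac1r\tr(F_h)\Id_E$ for the trace-free part, one has $F_0\wedge\Omega_{d-1}=0$ pointwise and $\Delta(E,h)=\frac{r}{4\pi^2}\tr(F_0^2)$, so decomposing $F_0$ in an $h$-unitary frame and using the pointwise Hodge-Riemann property of $(\Omega_{d-1},\Omega_{d-2})$ from \cref{prop:ShurHRpair} term by term gives $\Delta(E,h)\wedge\Omega_{d-2}\ge 0$ pointwise on the locally free locus. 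The admissibility of $h$ ensures that $\int_X\Delta(E,h)\wedge\Omega_{d-2}$ computes $\int_X\Delta(E)\cdot s'_\lambda(\alpha_1,\dots,\alpha_e)$ (the curvature is $L^2$ and the Chern–Weil forms represent the topological Chern classes in the reflexive setting, by Bando–Siu). Integrating the pointwise inequality then yields $\int_X\Delta(E)\cdot s'_\lambda(\alpha_1,\dots,\alpha_e)\ge 0$, completing the proof.

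The main obstacle is the passage from the pointwise inequality over the locally free locus to the global integral statement for a merely reflexive sheaf: one must know that the Bando–Siu admissible Hermitian–Einstein metric has curvature controlled well enough (finite Yang–Mills energy, $L^2$ curvature) that the Chern–Weil integrals $\int_X c_i(E,h)$ recover the algebraically-defined Chern classes and that no boundary contribution arises along the singular set of $E$ (which has codimension $\ge 3$ for reflexive $E$, but one still needs the relevant integration-by-parts / removable-singularity input). This is precisely the content invoked in the last line of the proof of \cref{prop:Bogomolov-HRpairs}(3), and the proof here is, as the introduction indicates, this same argument carried out with $\Omega_{d-1}=s_\lambda(\alpha_1,\dots,\alpha_e)$ in place of $\omega^{d-1}$; the only genuinely new point compared to \cref{prop:Bogomolov-HRpairs} is that $\Omega_{d-1}$ need not be an honest $(d-1)$-st power of a Kähler form, but since it is still strictly positive its $(d-1)$-st root is Gauduchon and the Bando–Siu theory applies verbatim.
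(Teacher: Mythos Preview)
Your reduction to stable reflexive sheaves via \cref{lem:reflexivestable} and the pointwise Hodge--Riemann property from \cref{prop:ShurHRpair} are exactly as in the paper. The gap is in the last step. You invoke Bando--Siu to produce an admissible Hermitian--Einstein metric on a stable reflexive sheaf with respect to the \emph{Gauduchon} metric $\omega=\sqrt[d-1]{\Omega_{d-1}}$, and you assert that ``the Bando--Siu theory applies verbatim''. It does not: the Bando--Siu existence theorem is proved for K\"ahler metrics, and its extension to arbitrary Gauduchon metrics is not available in the literature the paper relies on. This is precisely why \cref{prop:Bogomolov-HRpairs}(3) carries the extra hypothesis $\Omega_{d-1}=\omega^{d-1}$ for a K\"ahler form $\omega$, and the remark immediately following that proposition singles out the missing Gauduchon version of Bando--Siu as exactly what would be needed to make your direct argument work.

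The paper bypasses this difficulty by never applying Hermitian--Einstein theory to a genuinely singular sheaf. Instead it takes a resolution $p:\hat{X}\to X$ on which $\hat{E}:=p^*E/\mathrm{Tors}$ becomes a \emph{locally free} sheaf, perturbs the pulled-back forms $p^*\omega_j$ by $\varepsilon\theta$ for a K\"ahler form $\theta$ on $\hat{X}$ so that the resulting Schur forms $\hat{s}_{\lambda,\varepsilon}$ are built from genuine K\"ahler forms on $\hat{X}$, checks (via a pseudo-stability argument) that $\hat{E}$ is $[\hat{s}_{\lambda,\varepsilon}]$-stable for small $\varepsilon>0$, and then applies only \cref{prop:Bogomolov-HRpairs}(2), i.e.\ the Li--Yau Hitchin--Kobayashi correspondence for vector bundles over Gauduchon manifolds. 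Letting $\varepsilon\to 0$ and pushing forward (using that $E$ and $p_*\hat{E}$ agree in codimension two) gives the inequality on $X$. So the paper trades the unavailable ``Bando--Siu for Gauduchon metrics'' for a resolution-and-perturbation argument that stays within the vector bundle case.
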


\begin{proof}
By Proposition \ref{prop:ShurHRpair} and Remark \ref{rem:diffformpairisacohomologypair}, $(s_{\lambda}(\alpha_1,\ldots,\alpha_e),s'_{\lambda}(\alpha_1,\ldots,\alpha_e))$ is a Hodge-Riemann pair.  So by Lemma \ref{lem:reflexivestable} we need only consider the case when $E$ is $s_{\lambda}(\alpha_1,\ldots,\alpha_e)$-stable and reflexive. 

For $i=1,\ldots,e$ let $\omega_i$ be a K\"ahler form in $\alpha_i$.   By \cite{Rossi} and \cite{Wlodarczyk}, there exists a proper modification $p: \hat{X} \to X$ with $\hat{X}$ smooth such that 
\begin{itemize}
    \item the induced morphism $\hat{X} \setminus p^{-1}(\Sing(E)) \to X \setminus \Sing(E)$ is an isomorphism, and
    \item $\hat{E} := p^*(E)/\Tors(p^*(E))$ is locally free.
\end{itemize}
Let $\theta$ be a K\"ahler form on $\hat{X}$, consider the forms $\hat{\omega}_{j,\varepsilon} := p^*\omega_j + \varepsilon \theta$ for $\varepsilon \ge 0$, and let $\hat{s}_{\lambda,\varepsilon} := s_\lambda(\hat{\omega}_{1,\varepsilon},\ldots,\hat{\omega}_{e,\varepsilon})$ and similarly for $\hat{s}'_{\lambda,\varepsilon}$. We also write $s_\lambda := s_\lambda(\omega_1,\ldots,\omega_e)$ and $s'_\lambda := s'_\lambda(\omega_1,\ldots,\omega_e)$ for simplicity.   

For small $\varepsilon > 0$, we show that $\hat{E}$ is $[\hat{s}_{\lambda,\varepsilon}]$-stable, and for this we apply \cite[Corollary 6.10]{TomaLimitareaII}. By loc.\ cit.\ it is enough to check that $\hat{E}$ is pseudo-stable with respect to $\hat{s}_{\lambda,0}$, that is, for any proper saturated subsheaf $F \subset \hat{E}$ the following inequality holds
\[
    \frac{\int_{\hat{X}} c_1(F) \cdot [\hat{s}_{\lambda,0}]}{\rk(F)} < \frac{\int_{\hat{X}} c_1(\hat{E}) \cdot [\hat{s}_{\lambda,0}]}{\rk(\hat{E})}.
\]
Here the word ``pseudo'' stresses the fact that stability is considered with respect to $\hat{s}_{\lambda,0}$, which is positive, but not necessarily strictly positive on $\hat{X}$. The above inequality may be rewritten as
\[
    \frac{\int_X p_*(c_1(F))\cdot [s_\lambda]}{\rk(p_*F)} < \frac{\int_X p_*(c_1(\hat{E}))\cdot [s_\lambda]}{\rk(E)}.
\]
The validity of this inequality follows from the stability of $E$ with respect to $[s_\lambda]$, remarking that 
\[
    p_*(c_1(F)) = c_1(p_*F) \quad \text{and}\quad p_*(c_1(\hat{E})) = c_1(E). 
\]

Since for $\varepsilon>0$ we know $(\hat{s}_{\lambda,\varepsilon}, \hat{s}'_{\lambda,\varepsilon})$ is a Hodge-Riemann pair by \cref{prop:ShurHRpair}, we have by \cref{prop:Bogomolov-HRpairs}
\[
    \Delta(\hat{E}) \cdot [\hat{s}'_{\lambda,\varepsilon}] \ge 0,
\]
for small $\varepsilon > 0$. Now letting $\varepsilon$ tend to $0$, we obtain
\[
    \Delta(\hat{E}) \cdot [\hat{s}'_\lambda] \ge 0.
\]
As before we have by the projection formula
\[
    \int_{\hat{X}} \Delta(\hat{E})\cdot p^*[s'_\lambda] = \int_X p_*(\Delta(E))\cdot [s'_\lambda],
\]
and also
\[
    p_*(\Delta(\hat{E})) = \Delta(E)
\]
since $E$ and $p_*(\hat{E})$ coincide in codimension $2$.

\end{proof}

\begin{remark}
As is clear from the proof of Theorem \ref{thm:HRimpliesBG},  we can replace the Schur polynomial $s_{\lambda}$ with any symmetric polynomial $p$ of degree $d-1$ such that $p'(\omega_1,\ldots,\omega_{e})$ has the Hodge-Riemann property for any K\"ahler forms $\omega_i$ on any complex manifolds $\hat{X}$ of dimension $d$.  Compare Remark \ref{rmk:otherpolynomials}.
\end{remark}

\section{Segre Classes of Ample Vector Bundles}\label{sec:SegreClasses}

Let $X$ be a  projective $d$-dimensional manifold, $A$ be a vector bundle of rank $e \ge d-1$ on $X$, $\pi : \P(A) \to X$ be the natural projection, and $\xi = c_1(\cO_{\P(A)}(1))$ be the Chern class of the tautological line bundle on $\P(A)$. For $h\in \Amp^1(X)$ and $t\in \mathbb R_{\ge 0}$ we set
$$\xi_t := \xi + t \pi^* h$$
Recall we say the $\mathbb R$-twisted vector bundle $A\langle th\rangle$ is ample if $\xi_t$ is ample on $\mathbb P(A)$.

\begin{proposition}\label{prop:ss-projective-bundle}
 Assume  $A$ is an ample vector bundle on $X$ satisfying
\begin{equation}\label{eq:ineqCond}
    s_d(A)\ge \mu_{\max,s_{d-1}(A)}(A),
\end{equation}
where $\mu_{\max,s_{d-1}(A)}(A)$ denotes the maximal slope with respect to $s_{d-1}(A)$ of non-trivial subsheaves of $A$.
   Let $E$ be an $s_{d-1}(A)$-semistable torsion-free sheaf on $X$. Then $\pi^*(E)$ is $\xi$-semistable on $\P(A)$. 

If moreover inequality \eqref{eq:ineqCond} is strict, then the pullback $\pi^*(E)$ of any $s_{d-1}(A)$-stable torsion-free sheaf $E$ on $X$ is  $\xi$-stable on $\P(A)$. 
\end{proposition}
\begin{proof}
We prove only the stable case, since the other one is completely similar. Suppose by contradiction that $E$ is an $s_{d-1}(A)$-stable torsion-free sheaf on $X$ but such that $\pi^*(E)$ is not $\xi$-stable on $\P(A)$. By passing to a suitable exterior power of $E$, we may assume that $\pi^*(E)$ is destabilized by an invertible subsheaf $\pi^*(L^{-1})(-k)$ of $\pi^*(E)$, where $L$ is a line bundle on $X$ and $k$ is some integer. This gives us a non-trivial section in $\pi^*(E\otimes L)(k)$ and shows in particular that $k>0$, since $\pi_*\cO_{\P(A)}(-n)=0$ for $n$ positive and $\pi_*\cO_{\P(A)}=\cO_X$. 
The fact that the section 
\begin{equation}\label{eq:destSection}
\cO_{\P(A)}\to \pi^*(E\otimes L)(k)   
\end{equation}
destabilizes implies 
$$k{s_d(A)}+\mu_{s_{d-1}(A)}(E \otimes L) \le 0.$$
Taking the pushforward of \eqref{eq:destSection} by $\pi$ we obtain a section
\[
    \cO_X \to E \otimes L \otimes S^kA
\]
over $X$, which gives
\[
    0 \le \mu_{s_{d-1}(A)}(E \otimes L) + \mu_{\max,s_{d-1}(A)}(S^kA) = \mu_{s_{d-1}(A)}(E \otimes L) + k\mu_{\max,s_{d-1}(A)}(A).
\]
The last two inequalities lead to
\[
    s_d(A) \le \mu_{\max,s_{d-1}(A)}(A),
\]
contradicting the hypothesis on $A$.
\end{proof}

\begin{remark}
\begin{enumerate}
    \item The inequality \eqref{eq:ineqCond} is satisfied by ample vector bundles $A$ which are destabilized by a rank one {\em sub-bundle} $A_1$ which realizes the maximal slope with respect to $s_{d-1}(A)$. Indeed, denoting by $A_2$ the quotient $A/A_1$, we can write in this case
    \begin{align*}
        &s_d(A)-\mu_{\max,s_{d-1}(A)}(A)= \\
        &= \sum_{i=0}^d s_i(A_1)s_{d-i}(A_2)-s_1(A_1)\sum_{i=0}^{d-1} s_i(A_1)s_{d-i}(A_2) \\
     &= s_d(A_2)+\sum_{j=1}^d(s_j(A_1)-s_1(A_1)s_{j-1}(A_1))s_{d-j}(A_2)\\
     &= s_d(A_2)>0.
    \end{align*}
    In particular, the inequality \eqref{eq:ineqCond} is satisfied when $A$ is a direct sum of ample line bundles. 
    \item
   The examples provided by Kleiman in  \cite[Example 5]{Kleiman} show that there exist rank two globally generated ample vector bundles on surfaces which do not satisfy the inequality \eqref{eq:ineqCond}. At least for $X=\P^2$, one easily sees that Kleiman's examples are all $s_{d-1}(A)$-unstable. It would be therefore interesting to know if a supplementary stability assumption with respect to $s_{d-1}(A)$ on $A$ could lead to the inequality \eqref{eq:ineqCond} for $A$.  
\end{enumerate}
\end{remark}

\begin{corollary}\label{cor:segrebogomolovpairI}
Assume that $A$ is ample and satisfies inequality \eqref{eq:ineqCond}. Then $$(s_{d-1}(A),s_{d-2}(A))$$ is a Bogomolov pair.
\end{corollary}
\begin{proof}
 That $(s_{d-1}(A),s_{d-2}(A))$ is a Hodge-Riemann pair  follows from \cref{prop:ShurAmpleHRpair} (this can also be proved using the Hodge-Riemann property for $\xi$ on $\mathbb P(A)$ and pushing forward to $X$). 

Now let $E$ be a torsion-free sheaf on $X$ that is semistable with respect to $s_{d-1}(A)$. By \cref{prop:ss-projective-bundle}, $\pi^*E$ is semistable with respect to $\xi$, so applying the classical Bogomolov inequality gives
$$\int_X \Delta(E) s_{d-2}(A) = \int_{\mathbb P(A)} \Delta(\pi^* E)\xi^{d+e-2}\ge 0.$$
\end{proof}

The following provides a partial answer to question (Q) posed in \cref{rem:projectively-flat} for Bogomolov pairs of type $(s_{d-1}(A),s_{d-2}(A))$.

\begin{corollary}\label{cor:eqBogomolov-forSegre}
Assume that $A$ is ample and that the inequality \eqref{eq:ineqCond} holds strictly. If $E$ is a locally free sheaf that is stable with respect to $s_{d-1}(A)$ and
\[
    \int_X \Delta(E) s_{d-2}(A) = 0,
\]
then $E$ is projectively flat. 
\end{corollary}
\begin{proof}
By \cref{prop:ss-projective-bundle}, we know that $\pi^*(E)$ is stable with respect to the ample class $\xi$ on $\P(A)$ and
\[
    \int_Y \Delta(\pi^*(E)) \cdot \xi^{d+e-2} = 0.
\]
Then $\pi^*(E)$ is projectively flat \cite[Theorem 4.4.7]{KobayashiVectorBundles}, so it corresponds to a unitary representation $\pi_1(\P(A)) \to \mathrm{PU}(r)$. Since $\pi_1(\P(A)) \to \pi_1(X)$ is an isomorphism \cite[Theorem 4.41]{HatcherAT}, it follows that $E$ is also projectively flat.
\end{proof}

\begin{lemma}\label{lem:stablityAth}
Assume that $A\langle th\rangle$ is ample for $0 < t \ll 1$, that $s_{d-1}(A)$ is an ample $(d-1)$-class, and that the inequality \eqref{eq:ineqCond} holds strictly for $A$. Then there exists some real number $0 < t_0$ such that for every $0 < t \le t_0$, if $E$ is an $s_{d-1}(A\langle th\rangle)$-stable torsion-free sheaf on $X$, then $\pi^*(E)$ is $\xi_t$-stable on $\P(A)$. 
\end{lemma}
\begin{proof}
The proof is similar to that of \cref{prop:ss-projective-bundle}. If $E$ is any $s_{d-1}(A\langle th\rangle)$-stable sheaf on $X$ such that $\pi^*(E)$ is not $\xi_t$-stable on $\P(A)$, then there exists a destabilizing section 
\[
    \cO_{\P(A)}\to \pi^*(E\otimes L)(k)   
\]
for some line bundle $L$ on $X$, as in \eqref{eq:destSection}, which gives
\[
    k \int_{\P(A)}\xi\cdot \xi_t^{d+e-2} + \mu_{s_{d-1}(A\langle th\rangle)}(E \otimes L) \le 0.  
\]
As before, this induces a section
\[
    \cO_X \to E \otimes L \otimes S^kA
\]
over $X$, from which we get
\[
    0 \le \mu_{s_{d-1}(A\langle th\rangle)}(E \otimes L) + k\mu_{\max,s_{d-1}(A\langle th\rangle)}(A).
\]
Putting together the last two inequalities we obtain
\[
    \int_{\P(A)}\xi\cdot \xi_t^{d+e-2} - \mu_{\max,s_{d-1}(A\langle th\rangle)}(A) \le 0.
\]

Now suppose that there is no such $0< t_0$ as in the statement. Therefore we find a decreasing sequence $(t_n)_{n \ge 1}$ that tends to 0, and such that
\begin{equation}\label{eq:tnIneq}
      \int_{\P(A)}\xi\cdot \xi_{t_n}^{d+e-2} - \mu_{\max,s_{d-1}(A\langle t_nh\rangle)}(A) \le 0.
\end{equation}
Consider the family of saturated subsheaves of $A$ which are (weakly) destabilizing for $A$ with respect to some class $s_{d-1}(A\langle th\rangle)$ with $t \in [0,t_1]$. This family is bounded, cf. \cite[Lemma 6.2]{MegyPavelToma}, and thus
$\mu_{\max,s_{d-1}(A\langle t_nh\rangle)}(A)$ tends to $\mu_{\max,s_{d-1}(A)}(A)$ as $t_n \to 0$. Letting $t_n \to 0$ in the inequality \eqref{eq:tnIneq}, we get
\[
    \int_{\P(A)}\xi^{d+e-1} = s_d(A) \le \mu_{\max,s_{d-1}(A)}(A),
\]
contradicting the supposed strict inequality in \eqref{eq:ineqCond}.
\end{proof}

\begin{corollary}\label{cor:segrebogomolovpair}
Assume that $A\langle th\rangle$ is ample for $0 < t \ll 1$, that $s_{d-1}(A)$ is an ample $(d-1)$-class, and that the inequality \eqref{eq:ineqCond} holds strictly for $A$. Then $$(s_{d-1}(A\langle th\rangle),s_{d-2}(A\langle th\rangle))$$ is a Bogomolov pair for $0 \le t \ll 1$.
\end{corollary}
\begin{proof}
By \cref{lem:reflexivestable}, it suffices to prove the Bogomolov inequality for reflexive sheaves that are $s_{d-1}(A\langle th\rangle)$-stable on $X$. For $0 < t \ll 1$, this follows by using \cref{lem:stablityAth} together with the classical Bogomolov inequality with respect to $\xi_t$ on $\P(A)$.

If $E$ is an $s_{d-1}(A)$-stable reflexive sheaf on $X$, then by \cref{lem:stabilityisopen} it is also $s_{d-1}(A\langle th\rangle)$-stable on $X$ for $0 < t \ll 1$. Hence
\[
    \int_X \Delta(E) \cdot s_{d-2}(A\langle th\rangle) \ge 0.
\]
Letting $t \to 0^+$, we get the Bogomolov inequality for $E$ with respect to $s_{d-2}(A)$.
\end{proof}

\section{Globally Generated Ample Bundles}

\begin{theorem}\label{thm:globallygenerated_chern}
Let $X$ be a projective manifold of dimension $d\ge 2$.  Suppose that $A$ is a globally generated and ample vector bundle of rank at least $d-1$ on $X$, which moreover satisfies
\begin{equation}\label{eq:ineqChern}
     c_d(A) > \mu_{\max,c_{d-1}(A)}(A).
\end{equation}
Then
$$(c_{d-1}(A),c_{d-2}(A))$$
is a Bogomolov pair.
\end{theorem}

\begin{proof}
By \cref{prop:ShurAmpleHRpair} $(c_{d-1}(A),c_{d-2}(A))$ is a Hodge-Riemann pair, so Lemma \ref{lem:reflexivestable} says is sufficient to prove that $(c_{d-1}(A),c_{d-2}(A))$ is a Bogomolov pair with respect to $c_{d-1}(A)$-stable reflexive sheaves.   
To this end, let $E$ be reflexive and $c_{d-1}(A)$-stable.  As $A$ is globally generated there is an exact sequence
$$ 0 \to K \to \mathcal O_X^N \to A\to 0$$
for some $N\in \mathbb N$.  If $c$ denotes the total Chern class, and $s$ the total Segre class, we then have
$ c(A) c(K) = 1$, which gives
$$ c(A) = s(K^\vee).$$

Let $h$ be an ample class on $X$. The surjection $\mathcal O_X^N\to K^\vee\to 0$ shows that $K^\vee$ is a nef bundle, and hence $K^\vee\langle th\rangle$ is ample for $0<t\ll 1$. Moreover, $s_{d-1}(K^\vee) = c_{d-1}(A)$ is an ample $(d-1)$-class, and the inequality \eqref{eq:ineqChern} becomes
\[
    s_d(K^\vee) > \mu_{\max,s_{d-1}(K^\vee)}(K^\vee).
\]
Hence by  \cref{cor:segrebogomolovpair}
$$\int_X \Delta(E) s_{d-2}(K^\vee) \ge 0,$$
which completes the proof.
\end{proof}

\begin{remark}
We expect that the above proof can be improved to show that $(s_{\lambda}(A),s_{\lambda}'(A))$ is a Bogomolov pair when $A$ is ample, globally generated, and satisfies the inequality \eqref{eq:ineqChern} (for one can use the cone construction of Fulton-Lazarsfeld as described in \cite{RossToma1} to relate $(s_{\lambda}(A),s_{\lambda}'(A))$ to a pair of Chern classes of a globally generated bundle on a certain normal variety $C$, which would then require an extension of several of the results in this paper to allow our base $X$ to be normal rather than smooth).
\end{remark}
\begin{remark}
It seems likely that neither the hypothesis that $A$ is globally generated nor the inequality \eqref{eq:ineqChern} is needed, but we do not know how to prove this.
\end{remark}

\section{Boundedness of Semistable Sheaves}

Let $X$ be either a complex smooth projective variety or a compact K\"ahler manifold of dimension $d$. In this section we establish several boundedness statements for semistable torsion-free sheaves on $X$.

\begin{definition}\label{def:boundedFam}
A set $\cS$ of isomorphism classes of coherent sheaves on $X$ is called \textit{bounded} if 
\begin{enumerate}
    \item \textbf{Algebraic case:} there exists a scheme $S$ of finite type over $\C$ and  a coherent sheaf $E$ on $S \times X$ such that $\cS$ is contained in the set of isomorphism classes of fibers of $E$ over points of $S$ \cite[Definition 1.7.5]{HL}.
    \item \textbf{Analytic case:} there exists a complex analytic space $S$, a compact subset $K \subset S$ and a coherent sheaf $E$ on $S \times X$ such that $\cS$ is contained in the set of isomorphism classes of fibers of $E$ over points of $K$ \cite[Definition 5.1]{TomaLimitareaII}.
\end{enumerate}
\end{definition}

When $X$ is projective, the two definitions given above coincide by the GAGA Theorem, cf. \cite[Remark 3.3]{TomaLimitareaI}. The following notion will be useful for showing our boundedness results.

\begin{definition}\label{def:boundednessclass}
    A class $\eta' \in \Amp^{d-1}(X)$ (resp. $\eta' \in \cK^{d-1}(X)$) is called a {\em boundedness class} if the following boundedness criterion holds:
    
    A set of isomorphism classes of torsion-free sheaves $E$ on $X$ is bounded if and only if the rank and the Chern classes of the sheaves $E$ take finitely many values, and their maximal slope $\mu_{\max,\eta'}(E)$ with respect to $\eta'$ is bounded above.
\end{definition}

In the projective setup it is known that any class of the form $\omega^{d-1} \in \Amp^{d-1}(X)$ with $\omega \in \Amp^1(X)$ satisfies the boundedness criterion; see \cite[Theorem 3.3.7]{HL} for the case where $\omega$ is rational, and \cite[Proposition 7.20]{joyce2021enumerative} for the general case.

\begin{proposition}\label{prop:boundedness_pairs}
Let $X$ be projective and $\alpha_1,\ldots,\alpha_{d-2}\in\Amp^1(X)$ be rational. Then any $\eta_{d-1}$ inside
    $$\Amp^{d-1}(X)\cap (\alpha_1 \cdots \alpha_{d-2}\Pos_{\alpha_1 \cdots \alpha_{d-2}}(X))$$ is a boundedness class.
\end{proposition}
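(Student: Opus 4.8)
The plan is to reduce the stated boundedness criterion for the class $\eta_{d-1}$ to the already-known boundedness criterion for a rational complete-intersection class $\omega^{d-1}$ (or rather for $\alpha_1\cdots\alpha_{d-2}\cdot\omega$), exploiting the Hodge--Riemann property of $\eta_{d-2} := \alpha_1\cdots\alpha_{d-2}$ to compare slopes. Concretely, write $\beta := \eta_{d-1}/\eta_{d-2}\in N^1(X)$; since $\eta_{d-1}\in \alpha_1\cdots\alpha_{d-2}\Pos_{\eta_{d-2}}(X)$, the class $\beta$ lies in the quadratic cone $\Pos_{\eta_{d-2}}(X)$, i.e.\ $\int_X \beta^2\cdot \eta_{d-2}>0$ and $\int_X \beta\cdot h\cdot \eta_{d-2}>0$ for $h\in\Amp^1(X)$. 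Fix a rational ample $h$; then for all $t>0$ small the class $\beta_t := \beta + th$ still lies in $\Pos_{\eta_{d-2}}(X)$, and by rationality of $h$ and the $\alpha_i$ one can choose a rational such $\beta_t$. I would then recall (Langer, or \cite[Theorem 3.4]{Langer}, as used in the proof of \cref{prop:complete_intersections}) that the boundedness criterion holds for classes of the form $\eta_{d-2}\cdot(\text{rational ample class})$ — i.e.\ for $\alpha_1\cdots\alpha_{d-2}\cdot \beta_t$ with $\beta_t$ rational ample — at least once one observes this class lies in $\Amp^{d-1}(X)$.

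**Main step.** The crux is to compare the maximal slope functions $\mu_{\max,\eta_{d-1}}$ and $\mu_{\max,\eta_{d-2}\beta_t}$ on a family of torsion-free sheaves of fixed rank $r$ and fixed Chern classes. Given such a sheaf $E$ and any saturated subsheaf $F\subset E$, set $\xi := c_1(F)/\rk(F) - c_1(E)/\rk(E)\in N^1(X)_{\mathbb Q}$, so that $\mu_{\gamma}(F)-\mu_{\gamma}(E) = \int_X \xi\cdot\gamma$ for any $\gamma\in N^{d-1}(X)$. The Hodge--Riemann property of $\eta_{d-2}$ forces the intersection form $Q_{\eta_{d-2}}$ to have signature $(+,-,\dots,-)$; writing $\beta_t = c h + (\text{orthogonal part})$ and $\xi = a h + \xi^\perp$ in a $Q_{\eta_{d-2}}$-orthogonal decomposition adapted to $\beta_t$, one gets, via the Hodge-index-type (reverse Cauchy--Schwarz) inequality on the negative-definite part, a bound of the shape
\[
\int_X \xi\cdot\beta_t\cdot\eta_{d-2} \;\le\; C_1\int_X\xi\cdot h\cdot\eta_{d-2} + C_2\left(\int_X \xi\cdot\beta_t\cdot\eta_{d-2}\right)^{1/2}\cdot\left(\text{stuff bounded by }c_i\right)^{1/2}
\]
with constants depending only on $\beta_t$, $h$, $\eta_{d-2}$. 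More usefully, the standard maneuver is: boundedness above of $\mu_{\max,\eta_{d-2}h}(E)$ together with fixed $r$ and fixed $c_i$ controls the discriminants of the subsheaves $F$ (Bogomolov-type inequality along the HN filtration), hence controls $\int_X\xi^2\cdot\eta_{d-2}$ from below, hence by the signature of $Q_{\eta_{d-2}}$ bounds $\int_X\xi\cdot\beta_t\cdot\eta_{d-2}$ from above — i.e.\ $\mu_{\max,\eta_{d-1}}(E)$ is bounded above. Running the same comparison in the reverse direction shows the two boundedness conditions are equivalent, so $\eta_{d-1}$ is a boundedness class because $\eta_{d-2}h$ (rational) is.

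**Alternative, cleaner route.** Rather than re-deriving the Hodge-index estimate, I would try to invoke directly the boundedness machinery of \cite{MegyPavelToma} (whose Theorem 3.1 is already cited in the proof of \cref{lem:stabilityisopen}): that result gives boundedness of families of quotients with slope bounded above, uniformly as the polarization varies in a compact subset of $\Amp^{d-1}(X)$. Since the segment $[\,\eta_{d-2}h,\ \eta_{d-1}\,]$ (or a small rational perturbation thereof) lies in $\Amp^{d-1}(X)$ and $\eta_{d-2}h$ is a boundedness class, one propagates the boundedness criterion along this segment: if $\mu_{\max,\eta_{d-1}}(E)$ is bounded above on a family with fixed topological type, the same holds for all polarizations on the segment by the uniform boundedness of the relevant Quot-type families, and at the endpoint $\eta_{d-2}h$ this forces the family to be bounded. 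Conversely a bounded family trivially has $\mu_{\max,\eta_{d-1}}$ bounded above.

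**Where the difficulty lies.** The main obstacle is the comparison estimate itself: turning "$\mu_{\max,\eta_{d-2}h}$ bounded above + fixed $(r,c_i)$" into "$\mu_{\max,\eta_{d-1}}$ bounded above" genuinely uses that $\eta_{d-2}$ has the Hodge--Riemann property (so that the quadratic form on Néron--Severi is definite off one line) — this is exactly the place where the hypothesis $\eta_{d-1}\in \eta_{d-2}\Pos_{\eta_{d-2}}(X)$ is consumed — and one must be careful that the constants are uniform over the family, which is why passing through the quantitative results of \cite{MegyPavelToma} (or \cite[Theorem 3.4]{Langer}) rather than an ad hoc argument is the safe path. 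The rationality hypothesis on the $\alpha_i$ is needed only to land in the case where $\eta_{d-2}h$ is already known to be a boundedness class.
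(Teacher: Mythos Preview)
The paper's proof is a one-line citation to \cite[Theorem 1.4]{PavelRossToma}, noting only that the technique there is restriction to hyperplane sections cut out by the $|\alpha_i|$ (which is why rationality of the $\alpha_i$ is needed). Your proposal takes a genuinely different route, attempting to propagate the boundedness criterion from a known boundedness class $\eta_{d-2}h$ along a segment in $\eta_{d-2}\Pos_{\eta_{d-2}}(X)$ using the Hodge--Riemann/Bogomolov machinery of this paper. That is in spirit the argument of \cref{thm:MainBound}, applied to the segment $[\eta_{d-2}h,\eta_{d-1}]\times\{\eta_{d-2}\}$; and indeed, granting Langer's multipolarization boundedness so that $\alpha_1\cdots\alpha_{d-2}h$ is a boundedness class, such an argument can be made to work.

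However, your ``main step'' as written does not close. You argue that $\mu_{\max,\eta_{d-2}h}$ bounded above forces $\mu_{\max,\eta_{d-1}}$ bounded above, which is the wrong direction: to invoke the boundedness criterion for $\eta_{d-2}h$ you need the reverse implication. Your claim that ``the same comparison in the reverse direction'' works is not justified: taking the Harder--Narasimhan filtration of $E$ with respect to $\eta_{d-1}$ and applying Bogomolov does bound the $\xi_{ij}$, hence the $c_1(F_i)$, but the factors $F_i$ are only $\eta_{d-1}$-semistable, so you have no direct control on $\mu_{\max,\eta_{d-2}h}(F_i)$ and hence none on $\mu_{\max,\eta_{d-2}h}(E)$. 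The correct fix is exactly the path-walking induction of \cref{thm:MainBound}: move along the segment until some factor becomes properly semistable, refine the filtration there, and iterate until all factors have rank one. Your ``alternative route'' via \cite[Theorem 3.1]{MegyPavelToma} is also misapplied: that statement bounds quotients of a \emph{fixed} sheaf $E$ as the polarization varies, not families of sheaves with bounded $\mu_{\max}$, so it does not give the boundedness criterion directly. Finally, note that in the paper's approach rationality is used to restrict to genuine hypersurfaces, not (as you say) merely to land at a known boundedness class.
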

\begin{proof}
This is proved in \cite[Theorem 1.4]{PavelRossToma} (the technique used there is to restrict to hyperplane sections, which is why the hypothesis that the $\alpha_i$ be rational is required).
\end{proof}

The following result and its proof are a generalization of \cite[Proposition 6.3]{GrebToma}.
\begin{theorem}\label{thm:MainBound}
Let $X$ be either a complex smooth projective variety or a compact K\"ahler manifold of dimension $d$. Let $K \subset \Amp^{d-1}(X) \times \Amp^{d-2}(X)$ (resp. $K \subset \cK^{d-1}(X) \times \cK^{d-2}(X)$ in the K\"ahler case) be a path-connected compact subset, and denote by $K' := \pr_1(K)$ and $K'' := \pr_2(K)$ its two corresponding projections. Suppose that

\begin{enumerate}
    \item there exists a pair $(\eta',\eta'') \in K$ such that $\eta'$ is a boundedness class and
    \item for every $\eta_{d-1} \in K'$ there exists $\eta_{d-2} \in K''$ and a path $$\gamma_{(\eta_{d-1},\eta_{d-2})} : [0,1] \to K$$ connecting $(\eta_{d-1},\eta_{d-2})$ to $(\eta',\eta'')$ such that for all $t \in [0,1]$ the pair $\gamma_{(\eta_{d-1},\eta_{d-2})}(t)$ is a Hodge-Riemann and Bogomolov pair. 
\end{enumerate}
Then the set $\Sigma$ of isomorphism classes of torsion-free sheaves of fixed rank $r$ and fixed Chern classes $c_i \in N^i(X)$ (resp. in $H^{2i}(X,\Z)$) that are $\eta_{d-1}$-semistable with respect to some $\eta_{d-1}\in K'$ is bounded.
\end{theorem}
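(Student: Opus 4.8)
The plan is to reduce the boundedness of $\Sigma$ to a uniform upper bound on $\mu_{\max,\eta'}(-)$, the maximal slope relative to the distinguished boundedness class $\eta'$, and then to obtain that bound by propagating control on the Harder--Narasimhan (HN) data of a sheaf along the paths $\gamma_{(\eta_{d-1},\eta_{d-2})}$, using at each point the Bogomolov and Hodge--Riemann properties that hold there.

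First I would reduce. By \cref{def:boundednessclass}, as $\eta'$ is a boundedness class and all members of $\Sigma$ have the same rank $r$ and Chern classes $c_i$, it is enough to find $C$, independent of $E\in\Sigma$, with $\mu_{\max,\eta'}(E)\le C$. Fix $E\in\Sigma$, say $\eta_{d-1}$-semistable, and take the associated path $\gamma:=\gamma_{(\eta_{d-1},\eta_{d-2})}$, writing $\gamma'(t):=\pr_1\gamma(t)$, $\gamma''(t):=\pr_2\gamma(t)$. Since $c_1(E)$ and $\Delta(E)$ are fixed and $K$ is compact, $\mu_{\gamma'(t)}(E)$ and $\int_X\Delta(E)\cdot\gamma''(t)$ are bounded uniformly over $E$ and $t$. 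I will bound, uniformly in $E$ and $t$, the first Chern classes of all HN factors of $E$ with respect to $\gamma'(t)$; taking $t=1$ (where $\gamma'(1)=\eta'$) then bounds $\mu_{\max,\eta'}(E)$.

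The core is a propagation argument in $t$, run as a ``continuous induction''. At $t=0$ the sheaf $E$ is $\gamma'(0)$-semistable, and the only HN factor is $E$, with its fixed Chern class. Suppose the HN factors of $E$ with respect to $\gamma'(s)$ have first Chern class in a fixed bounded set for all $s\le t_0$. Three points then let the control persist slightly past $t_0$. (i) Iterating \eqref{eq:bogomolovexact} gives, for the HN factors $Q_1,\dots,Q_m$ with respect to any $\gamma'(t)$,
\[
\frac{\Delta(E)}{r}=\sum_i\frac{\Delta(Q_i)}{\rk Q_i}-\sum_{i<j}\frac{\rk Q_i\,\rk Q_j}{r}\,\xi_{ij}^2,\qquad \xi_{ij}:=\frac{c_1(Q_i)}{\rk Q_i}-\frac{c_1(Q_j)}{\rk Q_j};
\]
intersecting with $\gamma''(t)$, the left side is bounded, the cross terms $\xi_{ij}^2\cdot\gamma''(t)$ are bounded once the $\xi_{ij}$ are, and each $\int_X\Delta(Q_i)\cdot\gamma''(t)\ge0$ because $Q_i$ is $\gamma'(t)$-semistable and $\gamma(t)$ is a Bogomolov pair, so each $\int_X\Delta(Q_i)\cdot\gamma''(t)$ is bounded. (ii) By \cite[Theorem 3.1]{MegyPavelToma}, over the compact polarization family $\gamma'([0,t_0])$ the torsion-free quotients of the fixed sheaf $E$ of bounded slope form a bounded family, so only finitely many HN filtrations of $E$ occur for $s\le t_0$, hence finitely many ``walls'', and $\mu_{\max,\gamma'(s)}(E)$ is continuous in $s$ there. (iii) When $t$ crosses a wall $w$ the HN filtration refines, and each newly split factor $Q_i'$ has the same $\gamma'(w)$-slope as its parent $Q_i$, so the relative class $\xi:=\tfrac{c_1(Q_i')}{\rk Q_i'}-\tfrac{c_1(Q_i/Q_i')}{\rk(Q_i/Q_i')}$ is $\gamma'(w)$-orthogonal; the Hodge--Riemann property makes $-\xi^2\cdot\gamma''(w)$ a positive-definite quadratic expression in $\xi$, bounded above, via \eqref{eq:bogomolovexact} applied to $Q_i$, by $\int_X\Delta(Q_i)\cdot\gamma''(w)$ from (i); hence $\xi$, and so the first Chern classes of the new HN factors, stay bounded. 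Since each HN filtration has length at most $r$, every HN factor present at $t=1$ is linked to $E$ at $t=0$ through a chain of at most $r$ ambient $\gamma'$-semistable subsheaves of strictly increasing rank, so the bound is applied only boundedly often and does not blow up. This completes the induction and, at $t=1$, bounds $c_1$ of the HN factors of $E$ relative to $\eta'$.

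I expect the main obstacle to be the uniformity of the Hodge--Riemann estimate: the constant in $-\xi^2\cdot\gamma''(t)\ge c\,\|\xi\|^2$ (for $\xi\cdot\gamma'(t)=0$) depends on $\gamma''(t)\in K''$ lying in the open Hodge--Riemann locus, and the pairs met along the various paths form a subset of $K$ whose closure need only consist of Bogomolov pairs (by \cref{lem:limitBog} and its K\"ahler analogue), not Hodge--Riemann pairs; keeping $c$ bounded below uniformly requires a separate compactness argument on $K$, or shrinking $K$. A secondary, more clerical, obstacle is the bookkeeping in the propagation --- controlling how the bounds accumulate along $[0,1]$ and that the number of steps is independent of $E$. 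The remaining points --- passing from torsion-free to reflexive and then locally free sheaves with the usual codimension-two corrections (as in \cref{lem:reflexivestable} and \cref{prop:Bogomolov-HRpairs}), and running everything in the compact K\"ahler setting --- are by now routine.
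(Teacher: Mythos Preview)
Your overall plan --- reduce to a uniform upper bound on $\mu_{\max,\eta'}$, then walk along the path $\gamma$ controlling the first Chern classes of the pieces of a filtration of $E$, using the Bogomolov inequality for the nonnegativity of each $\int_X\Delta(Q_i)\cdot\gamma''(t)$ and the Hodge--Riemann property to turn an upper bound on $-\xi^2\cdot\gamma''(t)$ into a bound on $\|\xi\|$ --- is exactly the paper's. So is the endgame: the process terminates after at most $r$ steps, at which point one reads off the bound for $\eta'$.

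The substantive difference is in \emph{which filtration} you propagate. You follow the Harder--Narasimhan filtration with respect to $\gamma'(t)$ and in (iii) assert that at a wall the HN filtration refines. That is not true in general: for $E=L_1\oplus L_2$ a sum of line bundles with $\mu_{\gamma'(t)}(L_1)-\mu_{\gamma'(t)}(L_2)$ changing sign at $t=w$, the HN filtration passes from $0\subset L_1\subset E$ through the trivial one at $w$ to $0\subset L_2\subset E$; neither side refines the other. If the path recrosses $w$ this coarsening--refining can repeat, so the ``at most $r$ applications'' accounting in your last paragraph does not follow from the HN picture alone. The paper sidesteps this entirely by \emph{not} insisting on the HN filtration: it carries an arbitrary filtration of $E$ with torsion-free factors that are $\gamma'(t_0)$-semistable for some current $t_0$ and have bounded $c_1$, then sets $t_1=\sup\{t\ge t_0:\text{all factors remain }\gamma'(t)\text{-semistable}\}$, and at $t_1$ refines \emph{one} properly semistable factor into two via \eqref{eq:bogomolovexact}. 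This is a monotone discrete induction on the length $m$ of the filtration (their statement $P(m)$), terminating at $m=r$ with rank-one factors that are automatically $\eta'$-semistable. In particular the finite-wall input you invoke in (ii) via \cite{MegyPavelToma} is unnecessary.

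The uniformity concern you flag is handled by compactness exactly as one would hope: fix a norm on $N^1(X)$ (resp.\ $H^{1,1}(X)$) and observe that $\zeta\mapsto -\zeta^2\cdot\eta_{d-2}$ is a continuous family of positive-definite quadratic forms on the hyperplanes $\{\zeta\cdot\eta_{d-1}=0\}$ as $(\eta_{d-1},\eta_{d-2})$ ranges over the compact set $K$ of pairs met; hence it is uniformly comparable to the fixed norm. Finally, your closing remark about passing to reflexive or locally free sheaves is off-target here --- the argument works directly with torsion-free $E$ and no such reduction is used.
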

\begin{proof}
We prove the statement when $X$ is projective; the K\"ahler case is identical. We first introduce some notation. Fix $h\in \Amp^1(X)$ and consider the cone bundle 
$$\cC:=\left\{(\eta\cdot \alpha, \eta)\in  N^{d-1}(X)\times K'' \ \mid \ \alpha\in N^1(X), \ \int_X \alpha^2\cdot\eta>0, \ \int_X h\cdot \alpha\cdot\eta>0\right \}$$ 
inside the trivial real vector bundle $N^{d-1}(X)\times K''$ over $K''$. Consider also the vector sub-bundle 
$$S:=\{ ((\eta_{d-1},\eta_{d-2}),\zeta)\in \cC\times N^1(X) \ \mid \ \zeta \cdot \eta_{d-1}=0\}$$
of the trivial real vector bundle $\cC\times N^{1}(X)$ over $\cC$
and the following metric in the fibers of $S$, 
$$ \| ((\eta_{d-1},\eta_{d-2}),\zeta)\|_S^2:=-\zeta^2\cdot\eta_{d-2}.$$
This defines a metric indeed since $(\eta_{d-1},\eta_{d-2})$ is a Hodge-Riemann pair for each $(\eta_{d-1},\eta_{d-2}) \in \cC$. Note that $K\subset \cC$ by assumption. 

We fix a norm $\| \cdot \|_{N^1(X)}$ on $N^1(X)$, that we use throughout. This gives us a metric on the trivial  bundle $K\times N^1(X)$ whose  restriction to $S|_{K}$ is comparable to $\|\cdot \|_{S}$ over $K$ since $K$ is compact, in particular there exists some $k>0$ such that $\| \zeta \|_{N^1(X)}\le k\|  (\eta,\zeta)\|_S$  for all $(\eta,\zeta)\in S|_{K}$.\\

Next we aim to show that for all torsion-free coherent sheaves $E$ whose isomorphism class $[E]$ lies in $\Sigma$, the maximal slope $\mu_{\max,\eta'}(E)$ is bounded above by a constant $C := C(r,c_1,c_2,K)$ depending only $r$, $c_1$, $c_2$ and $K$. The result then follows since $\eta'$ is assumed to be a boundedness class. 

If such a sheaf $E$ is already $\eta'$-semistable, then $\mu_{\max,\eta'}(E) = \mu_{\eta'}(E)$ is clearly upper bounded as required. So we may consider instead the family $\Sigma'$ of sheaves $E$ with $[E] \in \Sigma$ that are not $\eta'$-semistable.

Let $E$ be a torsion-free sheaf whose isomorphism class $[E]$ lies in $\Sigma'$ . By assumption (2), there exists a pair $(\eta_{d-1},\eta_{d-2}) \in K$ such that $E$ is $\eta_{d-1}$-semistable, and moreover there is a path $\gamma : [0,1] \to K$ connecting $(\eta_{d-1},\eta_{d-2})$ to $(\eta',\eta'')$ such that for all $t \in [0,1]$ the pair $\gamma(t)$ is a Hodge-Riemann and Bogomolov pair. 

We will show that $E$ admits a filtration
\begin{align*}
     0 = E_0 \subset E_1 \subset \cdots \subset E_m =E
\end{align*}
such that each factor $E_i/E_{i-1}$ is torsion-free, $\eta'$-semistable, and has its first Chern class bounded by some constant $C$ depending only on $r, c_1, c_2, K$. Here when we say we bound the first Chern class we mean in terms of the fixed norm $\| \cdot \|_{N^1(X)}$ on $N^1(X)$. Then the maximal $\eta'$-destabilizing subsheaf $F \subset E$ admits a nontrivial morphism to one of these factors, therefore $\mu_{\eta'}(F) = \mu_{\max,\eta'}(E) \le C$. 

We argue by contradiction, so suppose there is no such filtration as above. Consider the following statement:\\

$P(m)$: There is a partition $(r_1,\ldots,r_m)$ of $r$ with $0 < r_1, \ldots, r_m \le r$ such that $E$ admits a filtration $0 = E_0 \subset E_1 \subset \cdots \subset E_m =E$ whose factors $E_i/E_{i-1}$ are torsion-free of rank $r_i$. Moreover, there exists $t_0 \in [0,1]$ such that each factor $E_i/E_{i-1}$ is $\pr_1(\gamma(t_0))$-semistable, and has its first Chern class bounded by some constant depending only on $r, c_1, c_2, K$.\\

We prove by induction that $P(m)$ holds for $1 \le m \le r$. The case $m = 1$ is clear since $E$ is $\eta_{d-1} = \pr_1(\gamma(0))$-semistable and its first Chern class is fixed by hypothesis. Now suppose that $P(m)$ holds for some $m< r$. Then there exists $t_0 \in [0,1]$, a partition $(r_1,\ldots,r_m)$ of $r$ with $0 < r_1, \ldots, r_m \le r$, and a filtration $0 = E_1 \subset \ldots \subset E_m =E$ as in the statement of $P(m)$. In particular the factors $F_i := E_i/E_{i-1}$ are $\pr_1(\gamma(t_0))$-semistable. 

Consider
\[
    t_1 := \sup \{ t \in [t_0,1] \, \mid \, \text{all factors $F_i$ are $\pr_1(\gamma(t))$-semistable} \}.
\]
We may assume that $t_1 < 1$, since otherwise all factors are $\eta'$-semistable and we reach a contradiction. In this case one of the factors is properly $\pr_1(\gamma(t_1))$-semistable, say $F_{i_0}$ for some $1 \le i_0 \le m$. Thus there is a short exact sequence
\[
    0 \to F' \to F_{i_0} \to F'' \to 0
\]
such that $F'$ and $F''$ are $\pr_1(\gamma(t_1))$-semistable of ranks $r'$, respectively $r''$, and $$\mu_{\pr_1(\gamma(t_1))}(F') = \mu_{\pr_1(\gamma(t_1))}(F_{i_0}) = \mu_{\pr_1(\gamma(t_1))}(F'').$$
Set
\[
    \xi := \frac{c_1(F')}{r'} - \frac{c_1(F'')}{r''}, \quad \xi_{ij} := \frac{c_1(F_i)}{r_i} - \frac{c_1(F_j)}{r_j}
\]
for $1 \le i,j \le m$. Then a straightforward computation yields
\begin{align*}
        \frac{\Delta(E)}{r} ={}& \sum_{i=1}^m \frac{\Delta(F_i)}{r_i} - \frac{1}{r} \sum_{i < j} r_i r_j \xi_{ij}^2 \\
        ={}& \frac{\Delta(F')}{r'} + \frac{\Delta(F'')}{r''} + \sum_{i \neq i_0} \frac{\Delta(F_i)}{r_i} - \frac{1}{r} \sum_{i < j} r_i r_j \xi_{ij}^2  - \frac{r'r''}{r_{i_0}} \xi^2.
\end{align*}
We know that $\gamma(t_1)$ is a Hodge-Riemann and Bogomolov pair, that $\xi \cdot \pr_1(\gamma(t_1)) = 0$, 
and that the first Chern classes of the $F_i$ are bounded as in the statement of $P(m)$ (so the $\xi_{ij}$ are bounded).  So we obtain that $
-\xi^2 \cdot \pr_2(\gamma(t_1))$ is bounded from below by $0$ and bounded from above in terms of $r$, $c_1$, $c_2$ and $K$. This gives bounds on $\|\xi\|_{N^1(X)}$, $c_1(F')$ and $c_1(F'')$ depending only on $r$, $c_1$, $c_2$ and $K$. We thus obtain a new filtration 
\[
    0 = E_0 \subset \cdots\subset E_{i_0-1} \subset F \subset E_{i_0} \subset \cdots \subset E_m = E
\]
where $F$ is the preimage of $F'$ in $E_{i_0}$, which makes $P(m+1)$ hold.

Consequently $P(r)$ is valid, thus there exists a filtration 
\begin{align*}
     0 = E_0 \subset E_1 \subset \cdots \subset E_r =E
\end{align*}
such that each factor $E_i/E_{i-1}$ is torsion-free, of rank 1, and has its first Chern class bounded by some constant depending only on $r, c_1, c_2, K$. Since the factors have rank $1$, they are in particular $\eta'$-semistable, and we reached a contradiction.
\end{proof}

\begin{corollary}\label{cor:boundednessschurkahler}
Let $\tilde{K}$ be a path-connected  and compact set of K\"ahler classes on $X$ that includes a rational point, $\lambda$ is a partition of length $d-1$ and $e\ge d-1$. Then the set of isomorphism classes of torsion-free sheaves of given topological type that are semistable with respect to some element of 
$$  K'=\{ s_{\lambda}(\alpha_1,\ldots,\alpha_e) \, \mid \, \alpha_1,\ldots,\alpha_e\in \tilde{K}\}$$
is bounded.
\end{corollary}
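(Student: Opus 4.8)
The plan is to deduce this from \cref{thm:MainBound}. Consider the continuous (indeed polynomial) map
$$\Phi\colon \tilde{K}^{e}\to H^{d-1,d-1}(X)\times H^{d-2,d-2}(X),\qquad \Phi(\alpha_1,\dots,\alpha_e):=\bigl(s_\lambda(\alpha_1,\dots,\alpha_e),\,s'_\lambda(\alpha_1,\dots,\alpha_e)\bigr),$$
and set $K:=\Phi(\tilde{K}^{e})$. Since $\tilde{K}$ is compact and path-connected, so is the finite product $\tilde{K}^{e}$, and hence $K$ is a compact, path-connected subset whose first projection $\pr_1(K)$ is exactly the set $K'$ appearing in the statement. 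For every $(\alpha_1,\dots,\alpha_e)\in\tilde{K}^{e}$ the classes $\alpha_i$ are Kähler, so by \cref{prop:ShurHRpair} the pair $\Phi(\alpha_1,\dots,\alpha_e)$ is a Hodge--Riemann pair (in particular it lies in $\cK^{d-1}(X)\times\cK^{d-2}(X)$), and by \cref{thm:HRimpliesBG} it is also a Bogomolov pair. Thus every point of $K$ is simultaneously a Hodge--Riemann and a Bogomolov pair.

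It then remains to check the two hypotheses of \cref{thm:MainBound} for this $K$. For hypothesis (1): let $h\in\tilde{K}$ be the given rational Kähler class; by the Kodaira embedding theorem $X$ is projective and $h$ is a positive multiple of an integral ample class. By homogeneity and positivity of the Schur polynomial, $s_\lambda(h,\dots,h)=c\,h^{d-1}$ and $s'_\lambda(h,\dots,h)=c'\,h^{d-2}$ with $c,c'>0$, so $(\eta',\eta''):=\Phi(h,\dots,h)=(c\,h^{d-1},c'\,h^{d-2})\in K$. Taking $\alpha_1=\dots=\alpha_{d-2}=h$ in \cref{prop:boundedness_pairs} --- the required containment $h^{d-1}\in h^{d-2}\Pos_{h^{d-2}}(X)$ holds because $\int_X h^{d}>0$ forces $h\in\Pos_{h^{d-2}}(X)$ --- shows that $h^{d-1}$ is a boundedness class; and since the boundedness criterion of \cref{def:boundednessclass} is insensitive to scaling a class by a positive constant, $\eta'=c\,h^{d-1}$ is a boundedness class as well. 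For hypothesis (2): given $\eta_{d-1}=s_\lambda(\alpha_1,\dots,\alpha_e)\in K'$, set $\eta_{d-2}:=s'_\lambda(\alpha_1,\dots,\alpha_e)\in K''$; using path-connectedness of $\tilde{K}$, choose paths $\beta_i\colon[0,1]\to\tilde{K}$ with $\beta_i(0)=\alpha_i$ and $\beta_i(1)=h$, and define $\gamma(t):=\Phi(\beta_1(t),\dots,\beta_e(t))$. This is a path in $K$ from $(\eta_{d-1},\eta_{d-2})$ to $(\eta',\eta'')$, each of whose values is a Hodge--Riemann and Bogomolov pair by the first paragraph.

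With both hypotheses verified, \cref{thm:MainBound} yields that the set of isomorphism classes of torsion-free sheaves of the fixed topological type that are semistable with respect to some element of $K'=\pr_1(K)$ is bounded, which is the assertion.

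I do not expect a real obstacle here: the proof is an assembly of \cref{prop:ShurHRpair}, \cref{thm:HRimpliesBG}, \cref{prop:boundedness_pairs} and \cref{thm:MainBound}. The only step needing a genuine (if small) observation is hypothesis (1) --- that a rational point of $\tilde{K}$ actually produces an honest boundedness class, via the homogeneity $s_\lambda(h,\dots,h)=c\,h^{d-1}$ together with the fact that positive scalings preserve boundedness classes; this is precisely what the ``rational point'' assumption is there to supply. The remaining steps (compactness and path-connectedness of $\tilde{K}^{e}$, continuity of $\Phi$, and the check that $K,K',K''$ land inside the relevant positive cones, which is already built into the statements of \cref{prop:ShurHRpair} and \cref{thm:HRimpliesBG}) are routine.
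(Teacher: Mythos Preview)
Your proof is correct and follows essentially the same strategy as the paper: verify the hypotheses of \cref{thm:MainBound} using \cref{prop:ShurHRpair} and \cref{thm:HRimpliesBG}, with the rational point supplying the boundedness class. The only cosmetic difference is that the paper takes $K=K'\times K''$ while you take the (smaller) image $K=\Phi(\tilde{K}^e)$; since the path needed in hypothesis~(2) lies in $\Phi(\tilde{K}^e)$ either way, both choices work, and your write-up is in fact more explicit than the paper's about how hypothesis~(1) is met.
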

\begin{proof}
Set 
$$  K''=\{ s'_{\lambda}(\alpha_1,\ldots,\alpha_e) \, \mid \, \alpha_1,\ldots,\alpha_e\in \tilde{K}\}.$$
By \cref{prop:ShurHRpair} and \cref{thm:HRimpliesBG} the set $K=K'\times K''$ satisfies the hypotheses of Theorem \ref{thm:MainBound} which gives the result we want.
\end{proof}

\begin{corollary}\label{cor:CIboundedness}
    Let $X$ be a complex smooth projective variety and let $K'$ be a compact subset of 
     $$\bigcup_{\alpha_1,\ldots,\alpha_{d-2}\in\Amp^1(X)} \hspace{-1cm} \alpha_1\cdots\alpha_{d-2}\Pos_{\alpha_1 \cdots \alpha_{d-2}}(X) \cap \Amp^{d-1}(X).$$ 
Then the set of isomorphism classes of torsion-free sheaves of given topological type that are $\eta_{d-1}$-semistable with respect to some $\eta_{d-1}\in K'$ is bounded.
\end{corollary}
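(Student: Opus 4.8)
The plan is to reduce the statement to \cref{thm:MainBound}. Fix any $h\in\Amp^1(X)$; the pair $(h^{d-1},h^{d-2})$ will play the role of the base pair, since $h^{d-1}$ is a boundedness class by the results recalled after \cref{def:boundednessclass}, and $(h^{d-1},h^{d-2})$ is both a Hodge--Riemann and a Bogomolov pair, e.g.\ by \cref{prop:complete_intersections} with all $\alpha_i=h$. It then remains to build a compact, path-connected $K\subset\Amp^{d-1}(X)\times\Amp^{d-2}(X)$ with $\pr_1(K)\supseteq K'$ satisfying the two hypotheses of that theorem.

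First I would replace $K'$ by finitely many compact pieces, each adapted to a single complete-intersection class. Every $\eta\in K'$ lies in $\alpha_1\cdots\alpha_{d-2}\Pos_{\alpha_1\cdots\alpha_{d-2}}(X)\cap\Amp^{d-1}(X)$ for some $\alpha_1,\dots,\alpha_{d-2}\in\Amp^1(X)$, and this set is open in $N^{d-1}(X)$ because $\beta\mapsto\beta\cdot(\alpha_1\cdots\alpha_{d-2})$ is a linear isomorphism (the product has the Hodge--Riemann property) and both $\Pos_{\alpha_1\cdots\alpha_{d-2}}(X)$ and $\Amp^{d-1}(X)$ are open. Hence $\eta$ admits a compact neighbourhood inside it, and compactness of $K'$ yields finitely many such neighbourhoods $C_1,\dots,C_n$ covering $K'$, with $C_k\subset \eta_{d-2}^{(k)}\Pos_{\eta_{d-2}^{(k)}}(X)\cap\Amp^{d-1}(X)$ where $\eta_{d-2}^{(k)}:=\alpha_1^{(k)}\cdots\alpha_{d-2}^{(k)}$ and $\alpha_i^{(k)}\in\Amp^1(X)$.

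Next I would assemble $K$ from ``fans'' over the $C_k$ and interpolating arcs to $h$. For each $k$ let $A_k$ be the set of pairs $((1-s)\eta+s\,h\cdot\eta_{d-2}^{(k)},\,\eta_{d-2}^{(k)})$ with $\eta\in C_k$, $s\in[0,1]$, and let $B_k$ be the arc of pairs $(h\cdot\eta_{d-2}^{(k)}(s),\,\eta_{d-2}^{(k)}(s))$, $s\in[0,1]$, where $\eta_{d-2}^{(k)}(s):=\prod_i((1-s)\alpha_i^{(k)}+sh)$ interpolates $\eta_{d-2}^{(k)}$ to $h^{d-2}$. Since products of ample classes are ample and $\Amp^{\bullet}(X)$ is convex, $A_k$ and $B_k$ lie in $\Amp^{d-1}(X)\times\Amp^{d-2}(X)$; moreover $A_k$ is compact (a continuous image of $C_k\times[0,1]$) and path-connected, since each of its segments ends at $(h\cdot\eta_{d-2}^{(k)},\eta_{d-2}^{(k)})$, which is the initial point of $B_k$, and $B_k$ ends at $(h^{d-1},h^{d-2})$. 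Therefore $K:=\bigcup_{k=1}^n(A_k\cup B_k)$ is compact, path-connected (all pieces contain $(h^{d-1},h^{d-2})$), and satisfies $\pr_1(K)\supseteq\bigcup_k C_k\supseteq K'$.

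Finally I would verify the hypotheses of \cref{thm:MainBound}. Hypothesis (1) holds with $(\eta',\eta'')=(h^{d-1},h^{d-2})$. For hypothesis (2), given $\eta_{d-1}\in\pr_1(K)$ pick $k$ with $\eta_{d-1}$ on a segment of $A_k$ (or on $B_k$) and take the path to be the remainder of that segment followed by $B_k$. Along $A_k$, dividing the first coordinate by $\eta_{d-2}^{(k)}$ gives the convex combination $(1-s)(\eta/\eta_{d-2}^{(k)})+s\,h$ of the two elements $\eta/\eta_{d-2}^{(k)}$ (recall $\eta\in C_k$) and $h$ of $\Pos_{\eta_{d-2}^{(k)}}(X)$; since $\Pos_{\eta_{d-2}^{(k)}}(X)$ is the forward cone of the Lorentzian form $Q_{\eta_{d-2}^{(k)}}$ it is convex, so the combination stays in $\Pos_{\eta_{d-2}^{(k)}}(X)$, the pair stays Hodge--Riemann, and it is Bogomolov by \cref{prop:complete_intersections} because $\eta_{d-2}^{(k)}$ is a product of classes in $\Amp^1(X)$; the same applies along $B_k$ with $\eta_{d-2}^{(k)}(s)$ in place of $\eta_{d-2}^{(k)}$. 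Then \cref{thm:MainBound} yields boundedness of the torsion-free sheaves of the prescribed rank and Chern classes that are semistable with respect to some class of $\pr_1(K)\supseteq K'$, as desired. The only non-formal ingredients are the convexity of $\Pos_{\eta_{d-2}}(X)$ and the passage from the open cover of $K'$ to a genuinely compact, path-connected $K$; I expect these to be the main (though routine) points, the rest being bookkeeping around \cref{prop:complete_intersections} and \cref{thm:MainBound}.
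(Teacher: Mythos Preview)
Your argument is correct, and it takes a genuinely different route from the paper. The paper's proof is shorter: after covering $K'$ by finitely many of the open sets $\Amp^{d-1}(X)\cap(\alpha_1\cdots\alpha_{d-2}\Pos_{\alpha_1\cdots\alpha_{d-2}}(X))$ and noting that these vary continuously in the $\alpha_i$, it reduces to the case where $K'$ lies in a single such set with all $\alpha_i$ \emph{rational}. It then takes $K'':=\{\alpha_1\cdots\alpha_{d-2}\}$ to be a single point and $K=K'\times K''$. The key shortcut is \cref{prop:boundedness_pairs}: with rational $\alpha_i$, \emph{every} $\eta_{d-1}\in K'$ is already a boundedness class, so hypothesis~(1) of \cref{thm:MainBound} is satisfied without ever leaving the fibre over the fixed $\eta_{d-2}$, and the paths required in hypothesis~(2) can be taken entirely inside $K'\times\{\eta_{d-2}\}$ (using convexity of that set to restore path-connectedness). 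By contrast, you avoid \cref{prop:boundedness_pairs} and the rationality approximation altogether, at the cost of building explicit two-stage paths (the fans $A_k$ and the interpolating arcs $B_k$) back to a fixed base pair $(h^{d-1},h^{d-2})$. Your approach is more self-contained and would survive in settings where an analogue of \cref{prop:boundedness_pairs} is unavailable; the paper's is more economical once that proposition is in hand.
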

\begin{proof}
The sets $\Amp^{d-1}(X)\cap(\alpha_1 \cdots \alpha_{d-2}\Pos_{\alpha_1 \cdots \alpha_{d-2}}(X))$ are open and vary continuously with the $\alpha_i$.   Hence we may assume that $K'$ is contained in a single set of the form $\Amp^{d-1}(X)\cap(\alpha_1 \cdots \alpha_{d-2}\Pos_{\alpha_1 \cdots \alpha_{d-2}}(X))$ where the $\alpha_i\in \Amp^1(X)$ are all rational. Taking
\[
    K'' := \{ \alpha_1 \cdots \alpha_{d-2} \} \subset \Amp^{d-2}(X),
\]
we obtain using \cref{prop:complete_intersections} and \cref{prop:boundedness_pairs} that the set $K = K' \times K''$ fulfills the conditions of \cref{thm:MainBound}, which proves the statement.
\end{proof}

The following example can also be found in \cite{MegyPavelToma}.

\begin{example}
    We consider the projectivized bundle $X=\P(E)$ over $\P^1$, where $E=\cO_{\P^1}\oplus \cO_{\P^1}\oplus\cO_{\P^1}(-1)$. The effective and the nef cones of $X$ were computed by Fulger and Lehmann in \cite[Example 3.11]{FulgerLehmann2017cones}. They found 
    $$ \overline{\Eff}^1(X) =  \langle\gf,\xi \rangle, \ \Nef^1(X) =  \langle\gf,\xi+ \gf \rangle, \
\overline{\Eff}^2(X) =  \langle\xi\gf,\xi^2 \rangle
, \ \Nef^2(X) =  \langle\xi\gf, \xi\gf+\xi^2  \rangle,$$
where $\gf$ is the class of the fiber of $X\to\P^1$, and $\xi$ is the class of $\cO_{\P(E)}(1)$, the relations between them being $\xi^3=-1$ and $\xi^2\gf=1$. 
From this one easily computes the cone of complete intersection curve classes $\CI^2(X)$ and finds that $\CI^2(X)\varsubsetneq\Amp^2(X)$. More precisely, 
$$ \overline{\CI}^2(X)=\langle\xi\gf,\xi\gf+\frac{1}{2}\xi^2\rangle.$$ 
One can also check that in this example one has 
$$ \Amp^2(X)=\bigcup_{\alpha\in\Amp^1(X)}\alpha\Pos_\alpha(X),$$
in particular it follows that for any compact subset $K'$ of $\Amp^2(X)$ the set of isomorphism classes of torsion-free sheaves of given topological type that are $\eta_{2}$-semistable with respect to some $\eta_{2}\in K'$ is bounded.
\end{example}

\section{Bogomolov Pairs for Higgs Sheaves}

In this section we show that Hodge-Riemann pairs also lead to Bogomolov inequalities for Higgs sheaves. Let $(X,\omega)$ be a compact complex K\"ahler manifold of dimension $d$. By definition, a Higgs sheaf $(E,\theta)$ on $X$ consists of a coherent sheaf $E$ on $X$ together with a holomorphic map $\theta : E \to E \otimes \Omega^1_X$, called the Higgs field, such that $\theta \wedge \theta = 0$. 

We next recall the notion of Hermitian-Yang-Mills metrics for Higgs bundles (see \cite[Section 3]{Simpson-HiggsBundles}). Let $(E,\theta)$ be a Higgs bundle on $X$, i.e.\ a Higgs sheaf with $E$ locally free. Given a hermitian metric $h$ on $E$, we define the adjoint $\overline{\theta}_h$ of $\theta$ by
\[
    (\theta u,v)_h = (u,\overline{\theta}_hv)_h.
\]
Let $D_h$ be the Chern connection of $E$ compatible with the holomorphic structure on $E$ and the hermitian metric $h$. Consider
\[
    D_{h,\theta} := D_h + \theta + \overline{\theta}_h,
\]
which is usually called the Hitchin-Simpson connection, and let $F_{h,\theta} = D_{h,\theta}^2$ be the Hitchin-Simpson curvature of $D_{h,\theta}$. If $F_h = D_h^2$ denotes the curvature of $D_h$, then one can compute
\[
    F_{h,\theta} = F_h + D_h \theta + D_h\overline{\theta}_h + [\theta, \overline{\theta}_h]
\]
and
\[
    F_{h,\theta}^{1,1} = F_h + [\theta, \overline{\theta}_h].
\]

We define the discriminant of the Higgs bundle $(E,\theta)$ with respect to the connection $D_{h,\theta}$ by
\[
    \Delta(E,D_{h,\theta}) := 2r c_2(E,D_{h,\theta})  - (r-1) c_1(E,D_{h,\theta})^2.
\]

\begin{definition}\label{def:HYM-Higgs}
Let $\Omega_{d-1}$ be a $\partial \bar{\partial}$-closed form of type $(d-1,d-1)$ on $X$. A hermitian metric $h$ on a Higgs bundle $(E,\theta)$ is called \textit{Hermitian-Yang-Mills (HYM) with respect to $\Omega_{d-1}$} if
\[
    i(F_h + [\theta, \overline{\theta}_h])\wedge \Omega_{d-1} = \lambda \omega^d \Id_E
\]
for some constant $\lambda$ (or, equivalently, $iF_{h,\theta} \wedge \Omega_{d-1} = \lambda \omega^d \Id_E$).
\end{definition}

\begin{definition}
Let $\eta_{d-1} \in \cK^1(X)$. A Higgs sheaf $(E,\theta)$ on $X$ is called $\eta_{d-1}$-semistable (resp. stable) if $E$ is torsion-free and
\[
    \mu_{\eta_{d-1}}(F) \le \mu_{\eta_{d-1}}(E) \quad \text{(resp. }<)
\]
for all proper Higgs subsheaves $F \subset E$ (i.e.\ subsheaves satisfying $\theta(F) \subset F \otimes \Omega^1_X$). Analogously to \cref{def:sstability} we also get a natural notion of polystability for Higgs sheaves.
\end{definition}

\begin{remark}\label{rmk:non-abelianHodge}
The existence of HYM metrics on Higgs bundles is related to the above notion of stability via the non-abelian Hodge correspondence \cite{Simpson-HiggsBundles}, \cite{Nie-Zhang-HiggsGauduchon}. More precisely, if $\Omega_{d-1}$ is strictly positive, then $(E,\theta)$ is an $[\Omega_{d-1}]$-polystable Higgs bundle if and only if $(E,\theta)$ admits a HYM metric with respect to $\Omega_{d-1}$. The positivity of $\Omega_{d-1}$ is important for ensuring the existence of a Gauduchon metric $\omega' = \sqrt[d-1]{\Omega_{d-1}}$, which in turn allows the application of \cite[Theorem 1.1]{Nie-Zhang-HiggsGauduchon} to $(X,\omega')$.
\end{remark}

The following Bogomolov inequality is a generalization of the classical one for Higgs bundles \cite[Proposition 3.4]{Simpson-HiggsBundles}; see also \cite[Corollary 3.4]{Chen-Wentworth-nonabelianHodge}.
\begin{proposition}\label{prop:BG-Higgs} 
Let $(\Omega_{d-1},\Omega_{d-2})$ be a Hodge-Riemann pair of forms. Then
\begin{enumerate}
    \item If $(E,\theta)$ is a Higgs bundle admitting a HYM metric $h$ with respect to $\Omega_{d-1}$, then
\[
    \Delta(E,D_{h,\theta}) \wedge \Omega_{d-2} \ge 0
\]
pointwise over $X$.
    \item Assume also that $\Omega_{d-1}$ is strictly positive. If $(E,\theta)$ is an $[\Omega_{d-1}]$-stable Higgs bundle, then
\[
    \int_X \Delta(E) \cdot [\Omega_{d-2}] \ge 0.
\]
\end{enumerate}
\end{proposition}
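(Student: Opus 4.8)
The plan is to follow the proof of \cref{prop:Bogomolov-HRpairs}, using the Hitchin--Simpson connection $D_{h,\theta}$ in place of the Chern connection and accounting for the extra curvature terms produced by the Higgs field.

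For (1), let $h$ be an HYM metric on the Higgs bundle $(E,\theta)$ with respect to $\Omega_{d-1}$. The same purely algebraic Chern--Weil identity used in the proof of \cref{prop:Bogomolov-HRpairs} (see the proof of \cite[Theorem 2.2.3]{LuTe}) gives
\[
    \Delta(E,D_{h,\theta}) = \frac{r}{4\pi^2}\,\tr(F_0\wedge F_0), \qquad F_0 := F_{h,\theta} - \frac{1}{r}\tr(F_{h,\theta})\,\Id_E .
\]
Since $\Omega_{d-2}$ has pure type $(d-2,d-2)$, only the $(2,2)$-part of $\tr(F_0\wedge F_0)$ matters. From the decomposition $F_{h,\theta} = F_h + D_h\theta + D_h\overline{\theta}_h + [\theta,\overline{\theta}_h]$ recorded above, the type components are $F_{h,\theta}^{2,0} = D_h\theta$, $F_{h,\theta}^{1,1} = F_h + [\theta,\overline{\theta}_h]$ and $F_{h,\theta}^{0,2} = D_h\overline{\theta}_h$, with $F_{h,\theta}^{0,2}$ the $h$-adjoint of $F_{h,\theta}^{2,0}$; writing $F_0^{2,0}, F_0^{1,1}, F_0^{0,2}$ for their trace-free parts, the $(2,2)$-part of $\tr(F_0\wedge F_0)$ is $\tr(F_0^{1,1}\wedge F_0^{1,1}) + 2\,\tr(F_0^{2,0}\wedge F_0^{0,2})$.

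The HYM equation forces $iF_{h,\theta}^{1,1}\wedge\Omega_{d-1} = \lambda\omega^d\Id_E$, hence $F_0^{1,1}\wedge\Omega_{d-1} = 0$, so the term $\tr(F_0^{1,1}\wedge F_0^{1,1})\wedge\Omega_{d-2}$ is treated verbatim as in \cref{prop:Bogomolov-HRpairs}: in a local $h$-unitary frame the diagonal entries of $F_0^{1,1}$ are purely imaginary $(1,1)$-forms annihilated by $\wedge\,\Omega_{d-1}$, so the pointwise Hodge--Riemann property of $(\Omega_{d-1},\Omega_{d-2})$ makes their squares wedge $\Omega_{d-2}$ nonnegative, while the off-diagonal entries come in conjugate pairs and satisfy the same estimate. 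The genuinely new point is the term $2\,\tr(F_0^{2,0}\wedge F_0^{0,2})\wedge\Omega_{d-2}$: in a unitary frame this equals $2\sum_{i,j}\psi_{ij}\wedge\overline{\psi_{ij}}\wedge\Omega_{d-2}$, where the $\psi_{ij}$ are the $(2,0)$-form entries of $F_0^{2,0}$, so what remains is the pointwise positivity $\psi\wedge\overline{\psi}\wedge\Omega_{d-2}\ge 0$ for every $(2,0)$-form $\psi$. Granting this, $\Delta(E,D_{h,\theta})\wedge\Omega_{d-2}\ge 0$ pointwise, which is (1). I expect this last step to be the main obstacle: when $\Omega_{d-2}=\omega^{d-2}$ for a Kähler form $\omega$ it is immediate (diagonalising $\omega$ at a point kills the cross terms and leaves a sum of squares), and this is essentially Simpson's argument in \cite[Proposition 3.4]{Simpson-HiggsBundles}; for a general strictly weakly positive $\Omega_{d-2}$ one has to verify that the $(2,2)$-forms $\psi\wedge\overline{\psi}$ that occur are strongly positive, or otherwise extract the needed positivity from the constraint $\theta\wedge\theta = 0$ together with the Hodge--Riemann pair hypothesis.

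For (2), assume moreover that $\Omega_{d-1}$ is strictly positive and that $(E,\theta)$ is $[\Omega_{d-1}]$-stable, hence $[\Omega_{d-1}]$-polystable. By the non-abelian Hodge correspondence applied to the Gauduchon metric $\omega' = \sqrt[d-1]{\Omega_{d-1}}$ (see \cref{rmk:non-abelianHodge}), $(E,\theta)$ carries an HYM metric $h$ with respect to $\Omega_{d-1}$. Since $D_{h,\theta}$ is a (generally non-unitary) connection on the bundle $E$, its Chern--Weil forms represent the Chern classes of $E$ in de Rham cohomology, so on the compact Kähler manifold $X$ one has $\int_X\Delta(E,D_{h,\theta})\wedge\Omega_{d-2} = \Delta(E)\cdot[\Omega_{d-2}]$. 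Integrating the pointwise inequality of (1) over $X$ then yields $\int_X\Delta(E)\cdot[\Omega_{d-2}]\ge 0$.
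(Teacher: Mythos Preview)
Your approach coincides with the paper's: both use the Chern--Weil identity $\Delta(E,D_{h,\theta})=\frac{r}{4\pi^{2}}\tr\!\big((F_{h,\theta}^{\perp})^{2}\big)$, the HYM condition $F_{h,\theta}^{\perp}\wedge\Omega_{d-1}=0$, and then appeal to the pointwise computation of \cref{prop:Bogomolov-HRpairs}. The paper is terser---it simply writes ``as in the proof of \cref{prop:Bogomolov-HRpairs}, one obtains $\tr(F_{h,\theta}^{\perp}\wedge F_{h,\theta}^{\perp})\wedge\Omega_{d-2}\ge 0$''---and does not carry out the Hodge-type decomposition you perform. Your treatment of part~(2) matches the paper's.

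The obstacle you flag is genuine, and it is not addressed in the paper's proof either. The argument of \cref{prop:Bogomolov-HRpairs} relies on the fact that the matrix entries $F_{0,ij}$ are $(1,1)$-forms, so that writing $F_{0,ij}=\alpha+i\beta$ with $\alpha,\beta$ real $(1,1)$-forms one can invoke the Hodge--Riemann inequality (Definition~\ref{def:analyticHR} is stated only for $(1,1)$-forms). In the Higgs case the entries acquire $(2,0)$ and $(0,2)$ components, and the residual contribution is exactly your term $2\sum_{i,j}\psi_{ij}\wedge\overline{\psi_{ij}}\wedge\Omega_{d-2}$. For a \emph{non-decomposable} $(2,0)$-form $\psi$ the $(2,2)$-form $\psi\wedge\overline{\psi}$ is weakly positive but not strongly positive, so its pairing with a merely strictly \emph{weakly} positive $\Omega_{d-2}$ is not automatically nonnegative; neither the Hodge--Riemann hypothesis nor the integrability condition $\theta\wedge\theta=0$ (which constrains $\theta$, not $\partial_{h}\theta$) obviously rescues this. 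In the paper's intended application $\Omega_{d-2}=s_{\lambda}'(\omega_{1},\dots,\omega_{e})$ is a nonnegative combination of monomials in K\"ahler forms and hence \emph{strongly} positive, so there the difficulty evaporates; but for the general statement as written you have correctly isolated a step that the paper leaves unjustified.
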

\begin{proof}
Let $(E,\theta)$ be a Higgs bundle on $X$ admitting a HYM metric $h$ with respect to $\Omega_{d-1}$ (see \cref{def:HYM-Higgs}). In particular
\[
    F_{h,\theta}^\perp \wedge \Omega_{d-1} = 0,
\]
where $F_{h,\theta}^\perp$ denotes the trace-free part of the Hitchin-Simpson curvature of $(E,\theta,h)$. Then, as in the proof of \cref{prop:Bogomolov-HRpairs}, one obtains
\[
    \tr(F_{h,\theta}^\perp \wedge F_{h,\theta}^\perp)\wedge \Omega_{d-2} \ge 0
\]
pointwise, since $(\Omega_{d-1},\Omega_{d-2})$ is Hodge-Riemann. By Chern-Weil theory we also have
\[
    \Delta(E,D_{h,\theta}) := 2r c_2(E,D_{h,\theta})  - (r-1) c_1(E,D_{h,\theta})^2 = \frac{\rk(E)}{4\pi^2} \tr(F_{h,\theta}^\perp \wedge F_{h,\theta}^\perp),
\]
see proof of \cite[Theorem 2.2.3]{LuTe}. Hence
\begin{equation}\label{eq:discriminantHiggs}
    \Delta(E,D_{h,\theta}) \wedge \Omega_{d-2} = \frac{\rk(E)}{4\pi^2} \tr(F_{h,\theta}^\perp \wedge F_{h,\theta}^\perp)\wedge \Omega_{d-2} \ge 0.
\end{equation}

The second statement follows by the non-abelian Hodge correspondence (see \cref{rmk:non-abelianHodge}) and from integrating \eqref{eq:discriminantHiggs} to obtain
\[
    \int_X \Delta(E) \cdot [\Omega_{d-2}] = \frac{\rk(E)}{4\pi^2} \int_X [\tr(F_{h,\theta}^\perp \wedge F_{h,\theta}^\perp)] \cdot [\Omega_{d-2}] \ge 0.
\]
\end{proof}

The following result generalizes \cref{thm:HRimpliesBG} to the case of Higgs sheaves.

\begin{proposition}
Under the notation of \cref{thm:HRimpliesBG}, $$(s_{\lambda}(\alpha_1,\ldots,\alpha_e),s'_{\lambda}(\alpha_1,\ldots,\alpha_e))$$
is a Bogomolov pair for Higgs sheaves, i.e.\ for any $s_{\lambda}(\alpha_1,\ldots,\alpha_e)$-semistable Higgs sheaf $(E,\theta)$ on $X$,
\[
    \int_X \Delta(E) \cdot s'_{\lambda}(\alpha_1,\ldots,\alpha_e) \ge 0.
\]
\end{proposition}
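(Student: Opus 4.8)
The plan is to rerun the proof of \cref{thm:HRimpliesBG}, replacing every sheaf‑theoretic object by its Higgs counterpart. Fix Kähler forms $\omega_i\in\alpha_i$ and abbreviate $s_\lambda:=s_\lambda(\omega_1,\ldots,\omega_e)$, $s'_\lambda:=s'_\lambda(\omega_1,\ldots,\omega_e)$; by \cref{prop:ShurHRpair} the pair $(s_\lambda,s'_\lambda)$ is a Hodge–Riemann pair of forms. First I would carry out the reduction of \cref{lem:reflexivestable} inside the category of Higgs sheaves: the double dual of a Higgs sheaf is again a Higgs sheaf (the field $\theta$ extends across the codimension $\ge 2$ singular locus since $E^{\vee\vee}$ is reflexive), so since $[s'_\lambda]$ is positive and $c_2$ can only drop by an effective codimension‑$2$ cycle, the stable torsion‑free case reduces to the stable reflexive case; and for a properly $s_\lambda$‑semistable Higgs sheaf $(E,\theta)$ one picks a proper saturated Higgs subsheaf $F$ of the same slope (e.g.\ the first step of a Higgs Jordan–Hölder filtration), applies the identity \eqref{eq:bogomolovexact} to $0\to F\to E\to E/F\to 0$ with its induced Higgs structures, uses the Hodge–Riemann property to get $-\xi^2\cdot[s'_\lambda]\ge 0$ for $\xi=c_1(F)/\rk(F)-c_1(E/F)/\rk(E/F)$, and concludes by induction on the rank. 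It therefore suffices to treat a reflexive Higgs sheaf $(E,\theta)$ that is $s_\lambda$‑stable.

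As in \cref{thm:HRimpliesBG}, by \cite{Rossi,Wlodarczyk} choose a proper modification $p\colon\hat X\to X$ with $\hat X$ a smooth compact Kähler manifold, which is an isomorphism over $X\setminus\Sing(E)$ and such that $\hat E:=p^*E/\Tors(p^*E)$ is locally free. Composing $p^*\theta$ with the natural map $p^*\Omega^1_X\to\Omega^1_{\hat X}$ and passing to the torsion‑free quotient produces a Higgs field $\hat\theta\colon\hat E\to\hat E\otimes\Omega^1_{\hat X}$; the integrability $\hat\theta\wedge\hat\theta=0$ holds on the dense open set where $p$ is an isomorphism, hence everywhere since $\hat E\otimes\Omega^2_{\hat X}$ is locally free. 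Fixing a Kähler form $\vartheta$ on $\hat X$, set $\hat\omega_{j,\varepsilon}:=p^*\omega_j+\varepsilon\vartheta$, $\hat s_{\lambda,\varepsilon}:=s_\lambda(\hat\omega_{1,\varepsilon},\ldots,\hat\omega_{e,\varepsilon})$ and likewise $\hat s'_{\lambda,\varepsilon}$. For $\varepsilon>0$ these are Schur forms of Kähler forms, so $(\hat s_{\lambda,\varepsilon},\hat s'_{\lambda,\varepsilon})$ is a Hodge–Riemann pair of forms by \cref{prop:ShurHRpair}, and $\hat s_{\lambda,\varepsilon}$, having bidegree $(d-1,d-1)$ on a $d$‑dimensional manifold, is strictly positive.

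The key remaining step is to show that $(\hat E,\hat\theta)$ is $[\hat s_{\lambda,\varepsilon}]$‑stable \emph{as a Higgs bundle} for all sufficiently small $\varepsilon>0$. As in \cref{thm:HRimpliesBG}, using $p_*c_1(F)=c_1(p_*F)$ and $p_*c_1(\hat E)=c_1(E)$ together with the $s_\lambda$‑stability of $(E,\theta)$, one sees that $(\hat E,\hat\theta)$ is Higgs‑pseudo‑stable with respect to the nef class $[\hat s_{\lambda,0}]$, i.e.\ the strict slope inequality holds for every Higgs‑saturated subsheaf; one then upgrades this to genuine Higgs‑stability for nearby strictly positive classes by the Higgs analogue of \cite[Corollary 6.10]{TomaLimitareaII}. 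I expect this analogue to go through essentially verbatim, because Higgs‑saturated subsheaves are in particular saturated, so the boundedness of the relevant family of candidate destabilizing subsheaves is inherited from the non‑Higgs setting; nailing down this boundedness step is where the real work lies and is the main obstacle.

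Granting $[\hat s_{\lambda,\varepsilon}]$‑stability of $(\hat E,\hat\theta)$, the non‑abelian Hodge correspondence on the Gauduchon manifold $(\hat X,\sqrt[d-1]{\hat s_{\lambda,\varepsilon}})$ recalled in \cref{rmk:non-abelianHodge} produces a Hermitian–Yang–Mills metric, so the second part of \cref{prop:BG-Higgs} applies on $\hat X$ with $\Omega_{d-1}=\hat s_{\lambda,\varepsilon}$ and $\Omega_{d-2}=\hat s'_{\lambda,\varepsilon}$ and gives
\[
    \int_{\hat X}\Delta(\hat E)\cdot[\hat s'_{\lambda,\varepsilon}]\ge 0 .
\]
Letting $\varepsilon\to 0^+$ yields $\int_{\hat X}\Delta(\hat E)\cdot p^*[s'_\lambda]\ge 0$, and the projection formula together with $p_*\Delta(\hat E)=\Delta(E)$ — valid since $E$ and $p_*\hat E$ coincide in codimension $2$ — gives $\int_X\Delta(E)\cdot[s'_\lambda]\ge 0$. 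This is the assertion for stable reflexive Higgs sheaves, hence by the reduction in the first paragraph it holds for every $s_\lambda(\alpha_1,\ldots,\alpha_e)$‑semistable Higgs sheaf, as required. Besides the Higgs perturbation‑stability step, the only other point to be careful about is the elementary check that the pulled‑back Higgs field behaves well under $p$ and that the Hitchin–Simpson Chern forms on $\hat X$ compute the topological classes $c_i(\hat E)$, so that \cref{prop:BG-Higgs} indeed applies there.
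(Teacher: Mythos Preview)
Your proposal is correct and follows essentially the same route as the paper's own (sketch of) proof: reduce to stable reflexive Higgs sheaves via the Higgs analogue of \cref{lem:reflexivestable}, take a resolution $p\colon\hat X\to X$ with $\hat E$ locally free carrying an induced Higgs field, argue $(\hat E,\hat\theta)$ is $[\hat s_{\lambda,\varepsilon}]$-stable for small $\varepsilon$, apply the non-abelian Hodge correspondence and \cref{prop:BG-Higgs}, then let $\varepsilon\to 0$ and push down. The paper likewise only asserts the perturbation-stability step without further argument, so your flagging it as the place requiring care is apt but not a divergence from the paper's approach.
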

\begin{proof}[Sketch of proof]
The proof is similar to that of \cref{thm:HRimpliesBG} and uses the construction in \cite{Biswas-Schumacher-HiggsSheaves} (see also \cite[p.\ 466]{Cardona-HiggsBundles}). As before, it is enough to treat the case of an $s_{\lambda}(\alpha_1,\ldots,\alpha_e)$-stable reflexive Higgs sheaf $(E,\theta)$ on $X$. Consider a proper modification $p: \hat{X} \to X$ with $\hat{X}$ smooth such that 
\begin{itemize}
    \item the induced morphism $\hat{X} \setminus p^{-1}(\Sing(E)) \to X \setminus \Sing(E)$ is an isomorphism, and
    \item $\hat{E} := p^*(E)/\Tors(p^*(E))$ is locally free.
\end{itemize}
The composition 
\[
    p^*(E) \to p^*(E \otimes \Omega^1_X) \to p^*(E) \otimes \Omega^1_{\hat{X}}
\]
sends $\Tors(p^*(E))$ to $\Tors(p^*(E)) \otimes \Omega^1_{\hat{X}}$. Hence it will descend to the quotient $p^*(E)/\Tors(p^*(E)$ and define a Higgs field $\hat{\theta}$ on $\hat{E}$ satisfying $\hat{\theta} \wedge \hat{\theta} = 0$. 

Now we are in a situation where $(\hat{E},\hat{\theta})$ is a stable Higgs bundle on $\hat{X}$ with respect to $\hat{s}_{\lambda,\varepsilon}$ for small $\varepsilon > 0$ (here we use the same notation for $\hat{s}_{\lambda,\varepsilon}$ as in the proof of \cref{thm:HRimpliesBG}). By the non-abelian Hodge correspondence (see \cite{Nie-Zhang-HiggsGauduchon}), there is a HYM metric on $\hat{E}$ with respect to $\hat{s}_{\lambda,\varepsilon}$ -- one works in this case with the Gauduchon metric $\Omega = \sqrt[d-1]{\hat{s}_{\lambda,\varepsilon}}$ on $\hat{X}$. By \cref{prop:BG-Higgs} one gets a Bogomolov inequality for $\hat{E}$ with respect to $\hat{s}'_{\lambda,\varepsilon}$, which further gives the desired Bogomolov inequality for $E$.
\end{proof}

\section{Appendix: positive cones in K\"ahler geometry}\label{sect:appendix}

We give here some explanations and comments around Proposition \ref{prop:cones}. Throughout this appendix $(X,[\omega])$ will denote a polarized compact K\"ahler manifold of dimension $d$ with $\int_X\omega^d=1$ and $p$ will be an integer between $1$ and $d-1$.
\subsection{$d_{p,p}$ has closed range}
We follow the ideas of \cite[2]{HarveyLawson} for $p=1$ and their extension to arbitrary $p$ in \cite{AlessandriniAndreatta}, see also \cite{Alessandrini}. 
We denote the Fr\'echet spaces of complex differential $n$-forms or $(p,q)$-forms by $\cE^{n}(X)$ and $\cE^{p,q}(X)$ and by $\cE'_{n}(X)$ and $\cE'_{p,q}(X)$ their dual spaces of currents of dimension $n$ and bidimension $(p,q)$ on $X$ (endowed with their dual weak topology) respectively. A subscript $\R$ will indicate that we deal with real forms or currents. 

\begin{lemma}
    The restriction of the exterior differentiation operator 
    $$\d|_{\cE^{p,p}(X)_\R}:  \cE^{p,p}(X)_\R\to (\cE^{p+1,p}(X)\oplus\cE^{p,p+1}(X))_\R  $$ 
    has closed range. 
\end{lemma}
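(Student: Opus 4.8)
The statement is that $d$ restricted to real $(p,p)$-forms has closed range. The strategy is the standard one from Hodge theory, following \cite{HarveyLawson} and \cite{AlessandriniAndreatta}: exhibit the range of $d|_{\cE^{p,p}(X)_\R}$ as the kernel of a continuous operator, or better, identify it with a complemented subspace arising from a finite-dimensional cohomology group together with an exact term that has closed range on its own. Concretely, a real closed $(p,p)$-form decomposes, and $d\beta$ for $\beta\in\cE^{p,p}(X)_\R$ lands in $(\cE^{p+1,p}(X)\oplus\cE^{p,p+1}(X))_\R$; writing $d\beta=\partial\beta+\bar\partial\beta$ with $\partial\beta\in\cE^{p+1,p}(X)$ and $\bar\partial\beta=\overline{\partial\beta}$, the reality constraint means the data is equivalent to the single complex form $\partial\beta$. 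So the claim will reduce to showing that $\partial|_{\cE^{p,p}(X)}$ — or an equivalent elliptic-complex statement — has closed range.

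First I would set up the relevant elliptic complex: since $X$ is compact K\"ahler, the Bott--Chern and Aeppli cohomology groups $H^{p,p}_{BC}(X)$, $H^{p,p}_A(X)$ are finite-dimensional, and the operators $\partial\bar\partial$, $\partial$, $\bar\partial$ fit into elliptic complexes whose Laplacians are elliptic with discrete spectrum. The key general fact is that an elliptic (pseudo)differential operator, or a differential operator appearing in an elliptic complex, between sections of vector bundles on a compact manifold has closed range; this is classical and is exactly the input used in \cite{AlessandriniAndreatta}. Thus the proof amounts to: (i) observe that a real $(p,p)$-form $\beta$ satisfies $d\beta = \partial\beta + \overline{\partial\beta}$, so the map $\beta\mapsto d\beta$ has the same kernel and essentially the same range (up to the closed real-structure identification $\gamma\mapsto\gamma+\bar\gamma$) as $\beta\mapsto\partial\beta$ on $\cE^{p,p}(X)$; (ii) invoke that $\partial$ between the appropriate form spaces has closed range because it sits in an elliptic complex (the Dolbeault-type complex, or via Hodge theory for the $\partial$-Laplacian $\square_\partial = \partial\partial^* + \partial^*\partial$, which on a compact K\"ahler manifold coincides up to a constant with the other Laplacians); (iii) conclude that the image of $d|_{\cE^{p,p}(X)_\R}$, being the image under the continuous closed-range real-part map of a closed subspace, is itself closed in the Fr\'echet space $(\cE^{p+1,p}(X)\oplus\cE^{p,p+1}(X))_\R$.

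The technical point to be careful with is that we are working with Fr\'echet spaces of smooth forms, not Hilbert/Sobolev spaces, so "closed range" must be established at the $C^\infty$ level; the standard device is to use that for an elliptic operator the smooth solution space is a closed (indeed finite-codimensional in each closed-range sense) subspace, using elliptic regularity to pass between Sobolev completions and $\cE$, together with the open mapping theorem for Fr\'echet spaces once one knows the quotient is Hausdorff — equivalently, that the cohomology is finite-dimensional and represented by smooth harmonic forms. This is precisely the content borrowed from \cite{HarveyLawson,AlessandriniAndreatta}, so I would simply cite it after making the reduction to $\partial$ explicit.

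**Main obstacle.** The genuinely load-bearing step is (ii): knowing that $\partial$ (equivalently the relevant piece of the K\"ahler identities/Laplacians) has closed range on smooth forms on a compact K\"ahler manifold. This is not elementary — it rests on elliptic theory and the finite-dimensionality of Bott--Chern/Aeppli cohomology — but it is entirely standard and available in the cited references, so in the write-up the "hard part" is really just organizing the reduction and citing correctly rather than proving anything new. The only place a subtlety could bite is making sure the real-structure reduction $d\beta\leftrightarrow\partial\beta$ is an isomorphism onto a \emph{closed} subspace of the target, which follows because $\gamma\mapsto\gamma+\bar\gamma$ is continuous with continuous inverse on its image among real forms.
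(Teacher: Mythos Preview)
Your reduction (i) is fine: the real-structure isomorphism $\gamma\mapsto\gamma+\bar\gamma$ from $\cE^{p+1,p}(X)$ onto $(\cE^{p+1,p}(X)\oplus\cE^{p,p+1}(X))_\R$ carries $\partial(\cE^{p,p}(X)_\R)$ onto $\d(\cE^{p,p}(X)_\R)$. But step (ii) has a genuine gap. Elliptic theory (the Dolbeault complex, or the $\partial$-Laplacian) gives closed range for the $\C$-linear operator $\partial:\cE^{p,p}(X)\to\cE^{p+1,p}(X)$, whereas after your reduction what you actually need is closed range for the $\R$-linear restriction $\partial|_{\cE^{p,p}(X)_\R}$. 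These two images are different: for a complex $\beta=\beta_1+i\beta_2$ one has $(\gamma+\bar\gamma)|_{\gamma=\partial\beta}=\d\beta_1+i(\partial-\bar\partial)\beta_2$, and the second term is not $\d$-exact on real $(p,p)$-forms unless $\partial\bar\partial\beta_2=0$, since $\d\big(i(\partial-\bar\partial)\beta_2\big)=2i\partial\bar\partial\beta_2$. So the image of $\partial(\cE^{p,p}(X))$ under $\gamma\mapsto\gamma+\bar\gamma$ is strictly larger than $\d(\cE^{p,p}(X)_\R)$, and closedness of the former does not yield closedness of the latter. Your step (iii) does not repair this, because the ``closed real-structure identification'' is defined on all of $\cE^{p+1,p}(X)$, not just on the $\partial$-image.

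The paper's argument is shorter and avoids the detour entirely. One observes that $\d(\cE^{p,p}(X)_\R)$ sits inside the closed Fr\'echet subspace $\Ker(\d)\subset(\cE^{p+1,p}(X)\oplus\cE^{p,p+1}(X))_\R$ with \emph{finite} codimension there --- this is precisely the input taken from \cite{HarveyLawson,AlessandriniAndreatta}, amounting to finite-dimensionality of the relevant cohomology. Choosing a finite-dimensional algebraic complement $L$ and applying the Open Mapping Theorem to the surjection $(\d,j):\cE^{p,p}(X)_\R\oplus L\to\Ker(\d)$ then shows that $\d(\cE^{p,p}(X)_\R)$ is closed in $\Ker(\d)$, hence in the ambient space. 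You actually allude to this route in your opening sentence (``identify it with a complemented subspace arising from a finite-dimensional cohomology group''); that is in fact the whole proof, and the reduction to $\partial$ on complex forms is neither needed nor, as written, sufficient.
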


\begin{proof}
    The idea is to first show that the image of the above operator has finite codimension inside the subspace $\Ker(\d)$ of $\d$-closed forms inside  $(\cE^{p+1,p}(X)\oplus\cE^{p,p+1}(X))_\R$, see loc.cit.. Then a standard application of the Open Mapping Theorem shows the assertion. Indeed, if $j:L\to \Ker(\d)$ is the inclusion map of a (finite dimensional) algebraic complement to $\d(\cE^{p,p}(X)_\R)$ inside $\Ker(\d)$, then the operator $(\d,j):\cE^{p,p}(X)_\R\oplus L\to \Ker(\d)$ is surjective, hence open, and the conclusion easily follows.\end{proof}

We now look at the transposed operator
$$\d_{p,p}: (\cE'_{p+1,p}(X)\oplus\cE'_{p,p+1}(X))_\R \to \cE'_{p,p}(X)_\R$$
to
$$\d|_{\cE^{p,p}(X)_\R}:  \cE^{p,p}(X)_\R\to (\cE^{p+1,p}(X)\oplus\cE^{p,p+1}(X))_\R. $$
One has  $\d_{p,p}=\pi_{p,p}\circ\d|_{(\cE'_{p+1,p}(X)\oplus\cE'_{p,p+1}(X))_\R}$, where $\pi_{p,p}:\cE_{2p}'(X)_\R\to \cE'_{p,p}(X)_\R$ is the natural projection. Then by the Closed Range Theorem we get
\begin{corollary}\label{cor:ClosedRange}
  The operator $\d_{p,p}$ has closed range. In particular the natural projection map 
  $$\Ker(\d|_{\cE'_{p,p}(X)_\R}:\cE'_{p,p}(X)_\R\to \cE'_{2p-1}(X)_\R)\longrightarrow H^{d-p,d-p}_{BC}(X)_\R$$
  is continuous, where $H^{d-p,d-p}_{BC}(X)_\R$ is endowed with its separated linear topology. 
\end{corollary}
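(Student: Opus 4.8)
The plan is to deduce that $\d_{p,p}$ has closed range from the Closed Range Theorem applied to the preceding Lemma, and then to realize $H^{d-p,d-p}_{BC}(X)_\R$ as the quotient of $\Ker(\d|_{\cE'_{p,p}(X)_\R})$ by a \emph{closed} subspace. For the first point: $\cE^{p,p}(X)_\R$ and $(\cE^{p+1,p}(X)\oplus\cE^{p,p+1}(X))_\R$ are Fr\'echet spaces, and by the Lemma the continuous operator $\d|_{\cE^{p,p}(X)_\R}$ between them has closed range. Since $\d_{p,p}$ is, up to sign, the transpose of $\d|_{\cE^{p,p}(X)_\R}$ (this is how it was introduced above), the Closed Range Theorem for Fr\'echet spaces gives that $\d_{p,p}$ has closed range in $\cE'_{p,p}(X)_\R$ for the weak dual topology in use here (equivalently for the strong topology).

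For the ``in particular'' part, write $\varphi\colon\Ker(\d|_{\cE'_{p,p}(X)_\R})\to H^{d-p,d-p}_{BC}(X)_\R$ for the natural map. A real current $T$ of bidimension $(p,p)$, i.e.\ bidegree $(d-p,d-p)$, is $\d$-closed if and only if it is both $\partial$- and $\bar\partial$-closed, so it carries a Bott--Chern class; and, using that Bott--Chern cohomology computed with currents agrees with the one computed with forms and that a $\partial\bar\partial$-potential may be taken real, one has $\varphi(T)=0$ iff $T=i\partial\bar\partial R$ for some real current $R$ of bidimension $(p+1,p+1)$. The key claim is then
\[
\Ker\varphi=\Ker(\d|_{\cE'_{p,p}(X)_\R})\cap\mathrm{Range}(\d_{p,p}).
\]
The inclusion $\subseteq$ is immediate from the identity $i\partial\bar\partial R=\frac12\,\d_{p,p}\bigl(i\bar\partial R+\overline{i\bar\partial R}\bigr)$. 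For $\supseteq$, let $T=\d_{p,p}(S+\bar S)=\partial S+\bar\partial\bar S$ be $\d$-closed; since $\d T=\partial\bar\partial\bar S-\partial\bar\partial S$ has its two summands in conjugate bidegrees, $\d T=0$ forces $\partial\bar\partial S=0$, so $\partial S$ is $\d$-closed and $\partial$-exact, and the $\partial\bar\partial$-lemma for currents on the compact K\"ahler manifold $X$ gives $\partial S=i\partial\bar\partial R$ for a current $R$ of bidimension $(p+1,p+1)$; hence $T=\partial S+\overline{\partial S}=i\partial\bar\partial(R+\bar R)\in\Ker\varphi$.

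Granting the claim, $\Ker\varphi$ is an intersection of two closed subspaces of $\cE'_{p,p}(X)_\R$ — one being the kernel of a continuous operator, the other being $\mathrm{Range}(\d_{p,p})$, closed by the first step — hence closed. Therefore $H^{d-p,d-p}_{BC}(X)_\R\cong\Ker(\d|_{\cE'_{p,p}(X)_\R})/\Ker\varphi$ inherits a separated quotient topology, which, $H^{d-p,d-p}_{BC}(X)$ being finite-dimensional, is its unique separated linear topology; and $\varphi$, being the associated quotient map, is continuous. The only non-formal ingredient is the identification of $\Ker\varphi$, and I expect the main obstacle there to be the careful bidegree bookkeeping together with the invocation of the $\partial\bar\partial$-lemma for currents; everything else follows formally from the Closed Range Theorem and the Lemma already established.
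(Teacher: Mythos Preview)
Your argument is correct under the appendix's standing assumption that $X$ is K\"ahler. The first half (closed range of $\d_{p,p}$ via the Closed Range Theorem applied to the preceding Lemma) is exactly what the paper does; the paper's entire proof is the single phrase ``by the Closed Range Theorem'', so your second half supplies detail the paper omits.

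One point is worth flagging. For the inclusion $\Ker(\d)\cap\mathrm{Range}(\d_{p,p})\subseteq\Ker\varphi$ you invoke the $\partial\bar\partial$-lemma for currents, which uses the K\"ahler hypothesis. This is legitimate here, but note that immediately after the next Proposition the paper remarks that ``until now the K\"ahler property has not been used in this section'', so the authors evidently have in mind a route to the continuity statement that avoids the $\partial\bar\partial$-lemma. One such route: on any compact complex manifold the Bott--Chern Laplacian is elliptic, so $\mathrm{Im}(i\partial\bar\partial)$ is closed and $H^{d-p,d-p}_{BC}$ is finite-dimensional directly, making $\varphi$ continuous without ever identifying $\Ker\varphi$ with $\Ker(\d)\cap\mathrm{Range}(\d_{p,p})$ (an identification which can indeed fail when the $\partial\bar\partial$-lemma does). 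Your approach has the virtue of tying the ``in particular'' tightly to the first assertion via the explicit description of $\Ker\varphi$; the K\"ahler-free approach is more robust but somewhat decoupled from the closed-range statement for $\d_{p,p}$.
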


\subsection{The cone $\Pseff^p(X)$}

The subset $C$ of $\cE_{2p}'(X)$ consisting of closed (strongly) positive currents $T\in \cE_{2p}'(X)$ such that $\int_X T\wedge\omega^p=1$ is weakly compact. This is a consequence of the Banach-Alaoglu-Bourbaki Theorem, see 
\cite[Proposition III.1.23]{DemaillyBook}. Together with Corollary \ref{cor:ClosedRange} this gives

\begin{proposition}
The cone $\Pseff^{d-p}(X)$ is closed.
\end{proposition}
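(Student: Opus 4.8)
The plan is to read off the closedness of $\Pseff^{d-p}(X)$ from the weak compactness of the slice $C$ recalled above together with the continuity supplied by Corollary \ref{cor:ClosedRange}. First I would note that a closed strongly positive current of dimension $2p$ is automatically of bidimension $(p,p)$, so $C$ lies in $\Ker(\d|_{\cE'_{p,p}(X)_\R})$, the domain of the class map $\varphi:\Ker(\d|_{\cE'_{p,p}(X)_\R})\to H^{d-p,d-p}_{BC}(X)_\R\cong H^{d-p,d-p}(X)$, which is continuous by that corollary. Since $C$ is weakly compact, and convex (the strongly positive closed currents form a convex cone and the normalization $\int_X T\wedge\omega^p=1$ is an affine condition), its image $\varphi(C)$ is a compact convex subset of the finite-dimensional space $H^{d-p,d-p}(X)$.

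Next I would locate $\varphi(C)$ inside an affine hyperplane avoiding the origin. As $\omega^p$ is closed, $[\eta]\mapsto\int_X\eta\wedge\omega^p$ is a well-defined linear functional $\ell$ on $H^{d-p,d-p}(X)$, and it takes the value $1$ on every point of $\varphi(C)$; in particular $0\notin\varphi(C)$. Moreover, because $\omega^p$ is strictly positive, every nonzero closed strongly positive $(d-p,d-p)$-current pairs strictly positively with $\omega^p$ and hence becomes an element of $C$ after rescaling. Therefore $\Pseff^{d-p}(X)$, the convex cone generated by the classes of such currents, coincides with $\{\lambda v \ \mid \ \lambda\ge 0,\ v\in\varphi(C)\}$, the cone over the compact convex set $\varphi(C)$.

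It then remains to observe that the cone over a compact set contained in a hyperplane $\{\ell=1\}$ is closed: given $\lambda_n v_n\to y$ with $\lambda_n\ge 0$ and $v_n\in\varphi(C)$, evaluating $\ell$ yields $\lambda_n\to\ell(y)=:\lambda$; if $\lambda=0$ then $y=0$ since $\varphi(C)$ is bounded, and if $\lambda>0$ then $v_n\to\lambda^{-1}y\in\varphi(C)$ by compactness, so in either case $y\in\Pseff^{d-p}(X)$. The only non-formal ingredient is the continuity of $\varphi$, which is precisely the content of Corollary \ref{cor:ClosedRange} (and ultimately of the closed-range statement); the remaining steps are elementary convexity, so I do not anticipate a genuine obstacle beyond keeping the bidegree/bidimension conventions consistent throughout.
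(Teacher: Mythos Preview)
Your proposal is correct and follows essentially the same approach as the paper: the paper's proof is just the terse statement that the weak compactness of $C$ (via Banach--Alaoglu--Bourbaki) together with the continuity of the projection in Corollary~\ref{cor:ClosedRange} gives the result, and your write-up supplies exactly the details behind that implication (image of $C$ is compact, sits in the affine hyperplane $\{\ell=1\}$, and the cone over such a set is closed).
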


Note that until now the K\"ahler property has not been used in this section. 
It will be used in the next statement. (The chosen  positivity type of forms will not play any role in our statements as long as one considers the correct type for the dual cones.)

\begin{proposition}\label{prop:Pseff-full-dimensional}
If $X$ is K\"ahler, then  $\Pseff^{p}(X)$ is full dimensional and salient.
\end{proposition}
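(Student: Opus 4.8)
The plan is to prove the two assertions --- salience and full-dimensionality --- separately, the latter being where the K\"ahler hypothesis really comes in.

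\emph{Salience.} First I would record that every element of $\Pseff^p(X)$ is in fact the class of a \emph{single} strong positive closed $(p,p)$-current, since a finite nonnegative combination of such currents is again one of them. So if some $\theta$ lies in $\Pseff^p(X)\cap(-\Pseff^p(X))$, write $\theta=[\alpha]$ and $-\theta=[\beta]$ with $\alpha,\beta$ strong positive closed $(p,p)$-currents. Then $T:=\alpha+\beta$ is a positive closed $(p,p)$-current with $[T]=0$, so $\int_X T\wedge\omega^{d-p}=0$; as $T\wedge\omega^{d-p}$ is a positive measure this is the mass of $T$, whence $T=0$, and since $\alpha,\beta\ge 0$ this forces $\alpha=\beta=0$ and $\theta=0$.

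\emph{Full dimensionality.} The strategy is to exhibit one smooth closed $(p,p)$-form that is pointwise strictly strongly positive, and then write an arbitrary class as a difference of elements of $\Pseff^p(X)$. The natural candidate is $\omega^p$: because $X$ is K\"ahler this is a smooth \emph{closed} real $(p,p)$-form, and at every point $\omega^p$ lies in the interior of the cone $SP^{p,p}$ of strongly positive $(p,p)$-forms on the tangent space (see the next paragraph). Granting this, for an arbitrary $\gamma\in H^{p,p}(X)$ I would choose a real closed $(p,p)$-form $g$ representing it --- possible since by definition $H^{p,p}(X)=H^{p,p}_{dR}(X)_{\R}$ --- and use compactness of $X$ to find $t_0>0$ with $g+t\omega^p$ pointwise strongly positive (and closed) for all $t\ge t_0$. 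Then $[g+t_0\omega^p]\in\Pseff^p(X)$ and $\gamma=[g+t_0\omega^p]-t_0[\omega^p]\in\Pseff^p(X)-\Pseff^p(X)$, so $\Pseff^p(X)$ spans $H^{p,p}(X)$.

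\emph{The key input, and the main obstacle.} What remains --- and this is the only nontrivial point --- is the pointwise linear-algebra fact that $\omega^p$ is strictly strongly positive whenever $\omega$ is a positive definite $(1,1)$-form. For $p\in\{0,1,d-1,d\}$ this is immediate since all positivity notions coincide there. For general $p$ it is standard (cf.\ \cite{HarveyKnapp}, \cite{DemaillyBook}) and can be seen as follows: $x$ lies in the interior of $SP^{p,p}$ iff $x\wedge\phi>0$ for every nonzero weakly positive $(d-p,d-p)$-form $\phi$; writing $\omega=i\sum dz_j\wedge d\bar z_j$ in unitary coordinates, $\omega^p$ is a positive combination of the coordinate decomposables $e_I=\bigwedge_{j\in I}(i\,dz_j\wedge d\bar z_j)$, so $\omega^p\wedge\phi=0$ forces each nonnegative term $e_I\wedge\phi$ to vanish, i.e.\ every ``diagonal'' coefficient of $\phi$ is zero; applying this after an arbitrary unitary change of frame (which fixes $\omega$, hence the relation $\omega^p\wedge\phi=0$) shows $\phi$ pairs to zero with every decomposable strongly positive $(p,p)$-form, and since these span $\Lambda^{p,p}$ and the wedge pairing is nondegenerate, $\phi=0$. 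The same fact applied to $\omega^{d-p}$ is what underlies ``mass zero implies zero'' in the salience step. Everything else is soft, so I expect this linear-algebra statement --- for which I would simply invoke the references --- to be the only real obstacle.
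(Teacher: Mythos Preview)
Your proof is correct and essentially matches the paper's: both salience arguments pair a positive representative against $\omega^{d-p}$ to force zero mass, and both full-dimensionality arguments rest on the strict strong positivity of $\omega^p$. The only cosmetic difference is that the paper directly exhibits an open neighbourhood of $[\omega^p]$ inside $\Pseff^p(X)$ (using that the projection from closed forms to cohomology is open), whereas you show $\Pseff^p(X)-\Pseff^p(X)=H^{p,p}(X)$; for a convex cone in a finite-dimensional space these are equivalent.
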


\begin{proof}
    If $(X,[\omega])$ is polarized K\"ahler, then clearly $[\omega^p]$ is a non-zero element in $\Pseff^{p}(X)$. There exists an open neighbourhood $V$ of $\omega^p$ in  $\Ker(\d|_{ \cE^{p,p}(X)_\R)}:\cE^{p,p}(X)_\R\to \cE^{2p+1}(X)_\R)$ consisting only of (strongly) positive $(p,p)$-forms. The natural projection $\Ker(\d|_{ \cE^{p,p}(X)_\R)}:\cE^{p,p}(X)_\R\to \cE^{2p+1}(X)_\R)\to H^{p,p}_{BC}(X)_\R$ is open and factors through $\Ker(\d|_{ \cE'_{d-p,d-p}(X)_\R)}:\cE'_{d-p,d-p}(X)_\R\to \cE'_{2d-2p-1}(X)_\R)$, hence the projection of $V$ to $H^{p,p}_{BC}(X)_\R$ is an open neighbourhood of $[\omega]$ lying inside $\Pseff^{p}(X)$. This shows that $\Pseff^{p}(X)$ is full dimensional.

    Suppose now that $[T]\in \Pseff^{p}(X)$ is the class of a closed positive current $T$ such that $-[T]\in\Pseff^{p}(X)$. Then we would have $0\le\int_XT\wedge\omega=-\int_X(-T)\wedge\omega\le0$ and thus $T$ must be zero. So $\Pseff^{p}(X)$ is salient. 
\end{proof}

\subsection{The cone $\Nef_A^p(X)$}
\begin{proposition}
    The cone $\Nef_A^p(X)$ is closed.
\end{proposition}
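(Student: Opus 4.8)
The idea is to exhibit $\Nef^p_A(X)$ as the closure of an explicit convex cone in the finite-dimensional space $H^{p,p}_A(X)_\R$, so that closedness becomes automatic. Write $Z:=\{\psi\in\cE^{p,p}(X)_\R : \partial\bar\partial\psi=0\}$ for the Fr\'echet space of $\partial\bar\partial$-closed real $(p,p)$-forms, let $\pi:Z\to H^{p,p}_A(X)_\R$ be the natural projection, and let $\{\omega^p\}\in H^{p,p}_A(X)_\R$ denote the Aeppli class of $\omega^p$. Set
$$\cK_A:=\pi\bigl(\{\psi\in Z : \psi\ge_w 0\}\bigr),$$
the image of the cone of weakly positive $\partial\bar\partial$-closed $(p,p)$-forms; this is a convex cone since weak positivity is a convex and scaling-invariant condition. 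Unwinding \cref{def:HRproperty}'s ambient definition of $\Nef^p_A(X)$, a class $a$ lies in the Aeppli cone defining $\Nef^p_A(X)$ iff for every $\varepsilon>0$ there is a representative $\alpha_\varepsilon$ of $a$ with $\alpha_\varepsilon\ge_w-\varepsilon\omega^p$; as $\alpha_\varepsilon$, being a representative of an Aeppli class, is $\partial\bar\partial$-closed and $\omega^p$ is $d$-closed, the form $\alpha_\varepsilon+\varepsilon\omega^p$ is $\partial\bar\partial$-closed and weakly positive, so this is equivalent to $a+\varepsilon\{\omega^p\}\in\cK_A$. Hence the relevant cone equals $\bigcap_{\varepsilon>0}\bigl(\cK_A-\varepsilon\{\omega^p\}\bigr)$, and since the canonical map $H^{p,p}_{dR}(X)_\R\to H^{p,p}_A(X)_\R$ is a (continuous) isomorphism, it suffices to prove this intersection is closed.

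\textbf{Topological input.} Two facts are needed. First, $\pi$ is continuous and open: the subspace of Aeppli-exact forms $\partial\cE^{p-1,p}(X)_\R+\bar\partial\cE^{p,p-1}(X)_\R$ is closed in $Z$ — this is the analogue for Aeppli cohomology of the closed-range statement behind \cref{cor:ClosedRange}, and can be obtained by the same functional-analytic argument (or from Hodge theory, cf.\ \cite{AlessandriniAndreatta}) — so $H^{p,p}_A(X)_\R$ carries the quotient topology, and being a continuous surjection of a Fr\'echet space onto a finite-dimensional space, $\pi$ is open. Second, $\omega^p$ is $\partial\bar\partial$-closed and strictly weakly positive, and strict weak positivity is an open condition on $(p,p)$-forms (by compactness of the Grassmannian of $p$-planes); hence a neighbourhood of $\omega^p$ in $Z$ consists of weakly positive $\partial\bar\partial$-closed forms, and its image under the open map $\pi$ is an open neighbourhood of $\{\omega^p\}$ inside $\cK_A$. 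Thus $\{\omega^p\}\in\operatorname{int}(\cK_A)$.

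\textbf{Conclusion.} I then invoke the elementary convex-geometry fact: if $C$ is a convex cone in a finite-dimensional real vector space and $v\in\operatorname{int}(C)$, then $\bigcap_{\varepsilon>0}(C-\varepsilon v)=\overline{C}$. The inclusion $\subseteq$ follows by letting $\varepsilon\to0$; for $\supseteq$, if $a\in\overline{C}$ then $\tfrac{1}{1+\varepsilon}a+\tfrac{\varepsilon}{1+\varepsilon}v\in\operatorname{int}(C)$ as a strict convex combination of a closure point and an interior point, and scaling by $1+\varepsilon$ (using that $C$ is a cone) gives $a+\varepsilon v\in C$. Applying this with $C=\cK_A$ and $v=\{\omega^p\}$ yields that the cone defining $\Nef^p_A(X)$ equals $\overline{\cK_A}$, which is closed; pulling back along the isomorphism $H^{p,p}_{dR}(X)_\R\to H^{p,p}_A(X)_\R$ shows $\Nef^p_A(X)$ is closed. (Alternatively, one could identify $\Nef^p_A(X)$ with the dual cone of $\Pseff^{d-p}(X)$ via a Hahn--Banach separation argument, dual cones being automatically closed.)

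\textbf{Main obstacle.} Everything after the reduction is soft, so the real content is the topological input that the Aeppli-exact $(p,p)$-forms form a closed subspace — equivalently, that $\pi$ is continuous for the Hausdorff topology on $H^{p,p}_A(X)_\R$. This is the Aeppli counterpart of the closed-range Lemma already established in this appendix, and is exactly where the compact (and Hodge-theoretic) hypotheses enter; one should also verify the harmless point that strict weak positivity is open so that $\{\omega^p\}$ is genuinely interior to $\cK_A$.
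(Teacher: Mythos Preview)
Your argument is correct and follows what is essentially the standard route (the paper itself gives no details, citing \cite[Lemma 2.3]{ChioseRasdeaconuSuvaina2019}, whose proof is of the same flavour): rewrite the Aeppli-nef cone as $\bigcap_{\varepsilon>0}(\cK_A-\varepsilon\{\omega^p\})$, observe that $\{\omega^p\}$ is interior to $\cK_A$, and invoke the elementary convex lemma to conclude that this intersection equals $\overline{\cK_A}$.

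One remark: the ``main obstacle'' you flag is less of an obstacle than you suggest. You do not actually need continuity or openness of the full projection $\pi$ (equivalently, closedness of the Aeppli-exact forms) to see that $\{\omega^p\}\in\operatorname{int}(\cK_A)$. Simply choose a finite-dimensional subspace $W\subset Z$ containing $\omega^p$ on which $\pi|_W:W\to H^{p,p}_A(X)_\R$ is a linear bijection; this exists because $H^{p,p}_A(X)_\R$ is finite-dimensional. Then $\pi|_W$ is automatically a homeomorphism for the unique Hausdorff topologies on both sides, strict weak positivity cuts out an open neighbourhood of $\omega^p$ in $W$, and its image is the required open neighbourhood of $\{\omega^p\}$ inside $\cK_A$. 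So the only analytic input genuinely needed is the finite-dimensionality of Aeppli cohomology on a compact complex manifold, which is standard.
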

The proof goes exactly as in \cite[Lemma 2.3]{ChioseRasdeaconuSuvaina2019}, where the authors' restriction to the case $p\in\{1,d-1\}$ is not necessary.

\begin{proposition}
    If $X$ is K\"ahler, the cone $\Nef_A^p(X)$ is dual to
    ${\Pseff}^{d-p}(X)$. In particular it is full dimensional and salient.
\end{proposition}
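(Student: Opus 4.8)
The plan is to show that $\Nef_A^p(X)$ is exactly the dual cone $\Pseff^{d-p}(X)^\vee$ with respect to the intersection pairing $H^{p,p}(X) \times H^{d-p,d-p}(X) \to \R$, and then deduce full-dimensionality and salience formally from the corresponding properties of $\Pseff^{d-p}(X)$ established in the previous proposition together with the fact that the pairing is non-degenerate (Poincar\'e duality).

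\begin{proof}
Recall that by definition $\Nef_A^p(X)$ is the pull-back of
$$\{ a\in H^{p,p}_A(X) \ \mid \ \forall \varepsilon>0 \ \exists \alpha_\varepsilon\in a \ \alpha_\varepsilon\ge_w-\varepsilon\omega^p\}$$
under the (iso)morphism $H^{p,p}_{dR}(X)\to H^{p,p}_A(X)$.  We first show $\Nef_A^p(X)\subseteq \Pseff^{d-p}(X)^\vee$.  Let $a\in \Nef_A^p(X)$ and let $[T]\in \Pseff^{d-p}(X)$ be represented by a strong positive closed $(d-p,d-p)$-current $T$.  For any $\varepsilon>0$ pick a smooth representative $\alpha_\varepsilon$ of $a$ with $\alpha_\varepsilon\ge_w -\varepsilon\omega^p$.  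Pairing against $T$ (which is legitimate since $\alpha_\varepsilon$ is smooth) and using that a strong positive current pairs non-negatively with a weakly positive form, we get $\int_X \alpha_\varepsilon\wedge T\ge -\varepsilon\int_X\omega^p\wedge T$.  The left side equals $\int_X a\cup[T]$ by closedness, and letting $\varepsilon\to 0$ gives $\int_X a\cup[T]\ge 0$.  Hence $a$ lies in the dual cone.

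For the reverse inclusion $\Pseff^{d-p}(X)^\vee\subseteq \Nef_A^p(X)$ we argue by Hahn-Banach.  Suppose $a\in H^{p,p}(X)$ pairs non-negatively with every class in $\Pseff^{d-p}(X)$ but $a\notin \Nef_A^p(X)$.  By \cref{cor:ClosedRange} the natural projection
$$\Ker\big(\d|_{\cE'_{d-p,d-p}(X)_\R}\big)\longrightarrow H^{d-p,d-p}_{BC}(X)_\R\cong H^{d-p,d-p}(X)$$
is continuous and surjective, so the preimage of $\Pseff^{d-p}(X)$ is a closed convex cone $\widetilde{C}$ in the Fr\'echet space of closed $(d-p,d-p)$-currents, containing the cone of strong positive closed currents.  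If $a\notin \Nef_A^p(X)$ then there is an $\varepsilon_0>0$ such that no smooth representative $\alpha$ of $a$ satisfies $\alpha\ge_w -\varepsilon_0\omega^p$; equivalently, the affine subspace $a+\d(\cE^{p-1,p}\oplus\cE^{p,p-1})_\R$ of closed smooth $(p,p)$-forms representing $a$ is disjoint from the open convex set of smooth forms $\ge_w -\varepsilon_0\omega^p$.  Dualizing, there is a non-zero closed current $T$, annihilating exact currents hence defining a class $[T]\in H^{d-p,d-p}(X)$, which is weakly positive enough to lie in $\widetilde{C}$ up to the $\varepsilon_0\omega^p$ error and yet pairs negatively with $a$ --- contradicting $a\in\Pseff^{d-p}(X)^\vee$.  (This is the standard separation argument as in the $p\in\{1,d-1\}$ case treated in \cite{ChioseRasdeaconuSuvaina2019} and \cite[Section 1]{DemaillyPaun}; the point is that closedness of the range in \cref{cor:ClosedRange} makes $\widetilde C$ closed so that Hahn-Banach applies.)

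Finally, $\Pseff^{d-p}(X)$ is full dimensional and salient by \cref{prop:Pseff-full-dimensional}, and the intersection pairing is a perfect pairing between $H^{p,p}(X)$ and $H^{d-p,d-p}(X)$.  Since the dual of a salient closed convex cone is full dimensional and the dual of a full dimensional closed convex cone is salient, $\Nef_A^p(X)=\Pseff^{d-p}(X)^\vee$ is full dimensional and salient.  The main obstacle in the argument is the second inclusion: one must upgrade the purely finite-dimensional duality between the images in cohomology to a separation statement at the level of currents and forms, and this is precisely where the closed-range input of \cref{cor:ClosedRange} is essential --- without it the cone $\widetilde C$ of currents representing pseudoeffective classes need not be closed and Hahn-Banach separation would fail.
\end{proof}
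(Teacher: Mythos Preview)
Your easy inclusion and the final cone-duality paragraph are correct. For the hard inclusion your overall strategy is legitimate but is \emph{dual} to the paper's, and the single sentence carrying the argument is too vague and contains several slips.

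The paper proceeds constructively in the space of \emph{currents}: given $[\eta]\in\Pseff^{d-p}(X)^\vee$ and $\varepsilon>0$, it separates the weak-$*$ compact convex set $K(\varepsilon)$ (strongly positive currents of mass one, shifted by $\varepsilon\omega^{d-p}$) from the closed subspace $F=\Ker\eta\cap\Ker d$, obtaining a \emph{form} $\beta_\varepsilon$; a scalar multiple is then shown, via the $H^{p,p}_A$--$H^{d-p,d-p}_{BC}$ duality, to be an Aeppli representative of $[\eta]$ satisfying $\lambda_\varepsilon\beta_\varepsilon\ge_w-\varepsilon\omega^p$. Your route separates instead in the space of \emph{forms}---the affine set of representatives of $a$ from the open set of forms strictly $\ge_w-\varepsilon_0\omega^p$---and produces a \emph{current}. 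That argument can be completed, but as written: (i) the relevant affine space is that of \emph{Aeppli} representatives ($\partial\bar\partial$-closed forms modulo $\partial\sigma+\bar\partial\bar\sigma$), not ``closed'' forms; (ii) the separating $T$ annihilates Aeppli-exact \emph{forms} (not ``exact currents''), and it is this that gives $\partial T=\bar\partial T=0$; (iii) the separation yields, up to sign, a \emph{genuinely} strongly positive closed current, not one that is ``weakly positive up to the $\varepsilon_0\omega^p$ error''; and (iv) one obtains only $\int_X a\cup[T]\le 0$, so equality must still be excluded (it forces $T(\omega^p)=0$, hence $T=0$, contradicting $T\neq 0$). Finally, your cone $\widetilde C$ is never used in this separation, and the open-versus-affine form of Hahn--Banach needs no closedness hypothesis, so your attribution of the role of \cref{cor:ClosedRange} is misplaced.
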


\begin{proof}  

It is immediately seen that  
${\Nef}_{A}^p(X)\subset{\Pseff}^{d-p}(X)^\vee$.
We prove the opposite inclusion by adapting the proof of 
\cite[Lemme 1.3]{LamariJMPA} to our situation, where $p$ is arbitrary but $X$ is K\"ahler, (see also  \cite{ChioseRasdeaconuSuvaina2019} for the case $p=1$ in the balanced case).

Let $[\eta]\in H^{p,p}_A(X)_{\R}$ 
be a non-zero Aeppli cohomology class which is non-negative on ${\Pseff}^{d-p}(X)$, 
and let $\eta\in \cE^{p,p}(X)_{\R}$ be a representative of this class. We may and will assume that $\int_{X}\eta\wedge\omega^{d-p}=1$.

We put $K\subset\cE_{2p}'(X)$ to be the set consisting of (strongly) positive currents $T\in \cE_{2p}'(X)$ such that $\int_X T\wedge\omega^p=1$. This set is convex and weakly compact. Its intersection with $\Ker(\d|_{ \cE'_{p,p}(X)_\R}) $ will be denoted as before by $C$.

Since $[\eta]\ne0$ and $[\omega^{d-p}]$ lies in the interior of  ${\Pseff}^{d-p}(X)$ (by the proof of Proposition \ref{prop:Pseff-full-dimensional}), we have $\int_{X}\eta\wedge\omega^{d-p}>0.$

We fix some $\varepsilon>0$ and  set 
$K(\varepsilon):=K+\varepsilon \omega^{d-p}$ and 
$C(\varepsilon):=C+\varepsilon \omega^{d-p}$. We obviously have 
$C(\varepsilon)=K(\varepsilon)\cap \Ker(\d|_{ \cE'_{p,p}(X)_\R}) $
and
\begin{equation}\label{eq:positivity}
\int_{X}T\wedge\eta>0,  \ \forall T\in C(\varepsilon).
\end{equation}
The $(p,p)$-form $\eta$ defines a  continuous linear functional on $\Ker(\d|_{ \cE'_{p,p}(X)_\R}) $. We denote by $F$ its kernel. By the inequality \eqref{eq:positivity} we have
$$K(\varepsilon)\cap F=C(\varepsilon)\cap F=\emptyset.$$
Thus by Hahn-Banach there exists a $(p,p)$-form 
$\beta_{\varepsilon}$ which vanishes on $F$ and is strictly positive 
on $K(\varepsilon)$. 

Put   
$$\lambda_{\varepsilon}:=
\frac {\int_{X}\eta\wedge\omega^{d-p}}{\int_{X}\beta_{\varepsilon}\wedge\omega^{d-p}}.$$  Then
the $(p,p)$-form $\eta-\lambda_{\varepsilon}\beta_{\varepsilon}$ vanishes both on $F$ and on $\omega^{d-p}$, hence vanishes on their algebraic span which is  $\Ker(\d|_{ \cE'_{p,p}(X)_\R}).$
By
 the duality between 
$H^{p,p}_A(X)_{\R}$ and $H^{d-p,d-p}_{BC}(X)_{\R}$, 
it follows that there exists a $(p,p-1)$-form $\gamma_{\varepsilon}$ such that 
$$
\eta-\lambda_{\varepsilon}\beta_{\varepsilon}=
-\bar\partial\gamma_{\varepsilon}-\partial\bar\gamma_{\varepsilon}.
$$ 
Thus the $(p,p)$-form 
$$
\eta+\bar\partial\gamma_{\varepsilon}+
\partial\bar\gamma_{\varepsilon}=\lambda_{\varepsilon}\beta_{\varepsilon}
$$
is in the class $[\eta]\in H^{p,p}_A(X)_{\R}$ and 
is strictly positive on $K(\varepsilon)$. 

We will now show that 
$$\eta+\bar\partial\gamma_{\varepsilon}+
\partial\bar\gamma_{\varepsilon}\ge_{w}-\varepsilon\omega^{p}.$$
Let  $T\in K$. Then 
$$\int_{X}T\wedge(\eta+\bar\partial\gamma_{\varepsilon}+
\partial\bar\gamma_{\varepsilon}+\varepsilon\omega^{p})=
\int_{X}T\wedge(\eta+\bar\partial\gamma_{\varepsilon}+
\partial\bar\gamma_{\varepsilon})+\varepsilon=$$
$$\int_{X}T\wedge(\eta+\bar\partial\gamma_{\varepsilon}+
\partial\bar\gamma_{\varepsilon})+\int_{X}(\varepsilon\omega^{d-p})\wedge(\eta+\bar\partial\gamma_{\varepsilon}+
\partial\bar\gamma_{\varepsilon})=
\int_{X}(T+\varepsilon\omega^{d-p})\wedge(\eta+\bar\partial\gamma_{\varepsilon}+
\partial\bar\gamma_{\varepsilon})>0,$$
since $\eta+\bar\partial\gamma_{\varepsilon}+
\partial\bar\gamma_{\varepsilon}$ is strictly positive on $K(\varepsilon)$.
\end{proof}

\begin{remark}
    If $X$ is K\"ahler the interior of the nef cone inside $H^{p,p}_A(X)_{\R}$ is the cone of Aeppli cohomology classes $[\eta]$ represented by strictly weakly positive $\partial\bar\partial$-closed $(p,p)$-forms. Indeed, for a class $[\eta]$ in the interior of the Aeppli nef cone there exists some $\epsilon>0$ such that $[\eta-\epsilon\omega^p]$ is also nef, and therefore for some representative $\eta_{\epsilon/2}$ of $[\eta]$ we have $\eta_{\epsilon/2}-\epsilon\omega^p\ge_w-\frac{\epsilon}{2}\omega^p$, hence $\eta_{\epsilon/2}\ge_w\frac{\epsilon}{2}\omega^p>_w0$.
\end{remark}

\bibliography{bib2025Arxiv.bib}

@article {TomaLimitareaI,
    AUTHOR = {Toma, Matei},
     TITLE = {Bounded sets of sheaves on {K}\"ahler manifolds},
   JOURNAL = {J. Reine Angew. Math.},
  FJOURNAL = {Journal f\"ur die Reine und Angewandte Mathematik. [Crelle's
              Journal]},
    VOLUME = {710},
      YEAR = {2016},
     PAGES = {77--93},
}

@article{Luebke,
 author = {Luebke, Martin},
 title = {Chernklassen von {Hermite}-{Einstein}-{Vektorbuendeln}},
 fjournal = {Mathematische Annalen},
 journal = {Math. Ann.},
 issn = {0025-5831},
 volume = {260},
 pages = {133--141},
 year = {1982},
 language = {German},
 doi = {10.1007/BF01475761},
 keywords = {53C55,53C99,55R50},
 url = {https://eudml.org/doc/163662},
 zbMATH = {3738489},
 Zbl = {0471.53043}
}

@book{Fulton-Pragacz,
 author = {Fulton, William and Pragacz, Piotr},
 title = {Schubert varieties and degeneracy loci},
 fseries = {Lecture Notes in Mathematics},
 series = {Lect. Notes Math.},
 issn = {0075-8434},
 volume = {1689},
 isbn = {3-540-64538-1},
 year = {1998},
 publisher = {Berlin: Springer},
 doi = {10.1007/BFb0096380},
 keywords = {14M15,14C17,14-02,14M12,14F05,14C15},
 zbMATH = {1181208},
 Zbl = {0913.14016}
}

@book{HatcherAT,
 author = {Hatcher, Allen},
 title = {Algebraic topology},
 isbn = {0-521-79540-0},
 year = {2002},
 publisher = {Cambridge: Cambridge University Press},
 
 keywords = {55-01,55Nxx,55Pxx,55Qxx},
 zbMATH = {2103273},
 Zbl = {1044.55001}
}

@misc{LiYau,
 author = {Li, Jun and Yau, Shing Tung},
 title = {Hermitian-Yang-Mills connection on non-{K{\"a}hler} manifolds},
 year = {1987},
 
 howpublished = {Mathematical aspects of string theory, {Proc}. {Conf}., {San} {Diego}/{Calif}. 1986, {Adv}. {Ser}. {Math}. {Phys}. 1, 560-573},
 keywords = {53C05,32L05,35Q99,81T08},
 zbMATH = {4086452},
 Zbl = {0664.53011}
}

@article{Chen,
 author = {Chen, Xuemiao},
 title = {Admissible {Hermitian}-{Yang}-{Mills} connections over normal varieties},
 fjournal = {Mathematische Annalen},
 journal = {Math. Ann.},
 issn = {0025-5831},
 volume = {392},
 number = {1},
 pages = {487--523},
 year = {2025},
 
 doi = {10.1007/s00208-025-03094-w},
 keywords = {53C07,14J60,14J17},
 zbMATH = {8025856}
}

@article{GrebToma,
  author = 	 {Greb, Daniel and Toma, Matei},
  title = 	 {Compact moduli spaces for slope-semistable sheaves},
  JOURNAL = {Algebr. Geom.},
    VOLUME = {4},
      YEAR = {2017},
    NUMBER = {1},
     PAGES = {40--78},
}

@article{Rossi,
 author = {Rossi, Hugo},
 title = {Picard variety of an isolated singular point},
 fjournal = {Rice University Studies},
 journal = {Rice Univ. Stud.},
 issn = {0035-4996},
 volume = {54},
 number = {4},
 pages = {63--73},
 year = {1968},
 
 keywords = {32H25},
 zbMATH = {3287091},
 Zbl = {0179.40103}
}

@incollection{Wlodarczyk,
 author = {W{\l}odarczyk, Jaros{\l}aw},
 title = {Resolution of singularities of analytic spaces},
 booktitle = {Proceedings of the 15th G\"okova Geometry-Topology Conference, Turkey, May 26--31, 2008},
 isbn = {978-1-57146-136-0},
 pages = {31--63},
 year = {2009},
 publisher = {Cambridge, MA: International Press},
 
 keywords = {32S45,14E15},
 zbMATH = {5575864},
 Zbl = {1187.32023}
}

@InCollection{RossToma2,
 Author = {Ross, Julius and Toma, Matei},
 Title = {On {Hodge}-{Riemann} cohomology classes},
 BookTitle = {Birational geometry, K\"ahler-Einstein metrics and degenerations. Proceedings of the conferences, Moscow, Russia, April 8--13, 2019, Shanghai, China, June 10--14, 2019, Pohang, South Korea, November 18--22, 2019},
 ISBN = {978-3-031-17858-0; 978-3-031-17861-0; 978-3-031-17859-7},
 Pages = {763--793},
 Year = {2023},
 Publisher = {Cham: Springer},
 
 DOI = {10.1007/978-3-031-17859-7_39},
 Keywords = {14C17,14J60,32J27,52A40},
 zbMATH = {7828774}
}

@article{PavelRossToma,
 author = {Pavel, Mihai and Ross, Julius and Toma, Matei},
 title = {Uniform boundedness of semistable pure sheaves on smooth projective varieties},
 fjournal = {Pure and Applied Mathematics Quarterly},
 journal = {Pure Appl. Math. Q.},
 issn = {1558-8599},
 volume = {21},
 number = {4},
 pages = {1491--1517},
 year = {2025},
 
 doi = {10.4310/PAMQ.250402014333},
 keywords = {14D20,32G13},
 zbMATH = {8022440}
}

@Article{TomaLimitareaII,
 Author = {Toma, Matei},
 Title = {Bounded sets of sheaves on relative analytic spaces},
 FJournal = {Annales Henri Lebesgue},
 Journal = {Ann. Henri Lebesgue},
 ISSN = {2644-9463},
 Volume = {4},
 Pages = {1531--1563},
 Year = {2021},
 
 DOI = {10.5802/ahl.109},
 Keywords = {32J99,14C05},
 zbMATH = {7480742},
 Zbl = {1487.32102}
}

@misc{joyce2021enumerative,
      title={Enumerative invariants and wall-crossing formulae in abelian categories}, 
      author={Dominic Joyce},
      year={2021},
NOTE =        {\texttt{arXiv:2111.04694}},
}

@Article{FulgerLehmann2017cones,
 Author = {Fulger, Mihai and Lehmann, Brian},
 Title = {Positive cones of dual cycle classes},
 FJournal = {Algebraic Geometry},
 Journal = {Algebr. Geom.},
 ISSN = {2313-1691},
 Volume = {4},
 Number = {1},
 Pages = {1--28},
 Year = {2017},
 
 DOI = {10.14231/AG-2017-001},
 Keywords = {14C17,14C25,14C40,14N15},
 zbMATH = {6741284},
 Zbl = {1376.14009}
}

@Article{RossToma3,
   Author = {Ross, Julius and Toma, Matei},
 Title = {Hodge-{Riemann} relations for {Schur} classes in the linear and {K{\"a}hler} cases},
 FJournal = {IMRN. International Mathematics Research Notices},
 Journal = {Int. Math. Res. Not.},
 ISSN = {1073-7928},
 Volume = {2023},
 Number = {16},
 Pages = {13780--13816},
 Year = {2023},
 
 DOI = {10.1093/imrn/rnac208},
 Keywords = {32J27},
 zbMATH = {7794867}
}

@Article{RossToma1,
 Author = {Ross, Julius and Toma, Matei},
 Title = {Hodge-{Riemann} bilinear relations for {Schur} classes of ample vector bundles},
 FJournal = {Annales Scientifiques de l'{\'E}cole Normale Sup{\'e}rieure. Quatri{\`e}me S{\'e}rie},
 Journal = {Ann. Sci. {\'E}c. Norm. Sup{\'e}r. (4)},
 ISSN = {0012-9593},
 Volume = {56},
 Number = {1},
 Pages = {197--241},
 Year = {2023},
 
 DOI = {10.24033/asens.2531},
 Keywords = {14C17,14J60,32J27,52A40},
 zbMATH = {7714398},
 Zbl = {1522.14012}
}

@Article{MegyPavelToma,
      title={Semistability conditions defined by ample classes}, 
      author={Damien M\'egy and Mihai Pavel and Matei Toma},
        Journal = {Geom Dedicata},
       Volume = {219}, 
       Number = {18},
      year={2025}
}

@article {DemaillyPaun,
    AUTHOR = {Demailly, Jean-Pierre and Paun, Mihai},
     TITLE = {Numerical characterization of the {K}\"{a}hler cone of a compact
              {K}\"{a}hler manifold},
   JOURNAL = {Ann. of Math. (2)},
  FJOURNAL = {Annals of Mathematics. Second Series},
    VOLUME = {159},
      YEAR = {2004},
    NUMBER = {3},
     PAGES = {1247--1274},
      ISSN = {0003-486X},
   MRCLASS = {32J27 (32Q15)},
MRREVIEWER = {Philippe P. Eyssidieux},
       DOI = {10.4007/annals.2004.159.1247},
       URL = {https://doi.org/10.4007/annals.2004.159.1247},
}

@Misc{HarveyKnapp,
 Author = {Harvey, Reese and Knapp, A. W.},
 Title = {Positive (p,p) forms, {Wirtinger}'s inequality, and currents},
 Year = {1974},
 
 HowPublished = {Value-{Distrib}. {Theory}, {Proc}. {Tulane} {Univ}. {Progr}. {Value}-{Distrib}. {Theory} complex {Analysis} related {Topics} differ. {Geom}., {Part} {A}, 43-62},
 Keywords = {53C65,14C30,32C30},
 zbMATH = {3451074},
 Zbl = {0287.53046}
}

@article {GKP-Movable,
    AUTHOR = {Greb, Daniel and Kebekus, Stefan and Peternell, Thomas},
     TITLE = {Movable curves and semistable sheaves},
   JOURNAL = {Int. Math. Res. Not. IMRN},
  FJOURNAL = {International Mathematics Research Notices. IMRN},
      YEAR = {2016},
    NUMBER = {2},
     PAGES = {536--570},
}

@article {HarveyLawson,
    AUTHOR = {Harvey, Reese and Lawson, Jr., H. Blaine},
     TITLE = {An intrinsic characterization of {K}\"{a}hler manifolds},
   JOURNAL = {Invent. Math.},
  FJOURNAL = {Inventiones Mathematicae},
    VOLUME = {74},
      YEAR = {1983},
    NUMBER = {2},
     PAGES = {169--198},
      ISSN = {0020-9910},
   MRCLASS = {32C10 (32J15)},
MRREVIEWER = {J\"{u}rgen Leiterer},
       DOI = {10.1007/BF01394312},
       URL = {https://doi.org/10.1007/BF01394312},
}

@article {LamariJMPA,
    AUTHOR = {Lamari, A.},
     TITLE = {Le c\^one k\"ahl\'erien d'une surface},
   JOURNAL = {J. Math. Pures Appl. (9)},
  FJOURNAL = {Journal de Math\'ematiques Pures et Appliqu\'ees. Neuvi\`eme S\'erie},
    VOLUME = {78},
      YEAR = {1999},
    NUMBER = {3},
     PAGES = {249--263},
}

@article{Gieseker_Bogomolov,
 author = {Gieseker, D.},
 title = {On a theorem of {Bogomolov} on {Chern} classes of stable bundles},
 fjournal = {American Journal of Mathematics},
 journal = {Am. J. Math.},
 issn = {0002-9327},
 volume = {101},
 pages = {77--85},
 year = {1979},
 
 doi = {10.2307/2373939},
 keywords = {14F05,14J10},
 zbMATH = {3670591},
 Zbl = {0431.14005}
}

@article{bdpp,
 author = {Boucksom, S{\'e}bastien and Demailly, Jean-Pierre and P{\u{a}}un, Mihai and Peternell, Thomas},
 title = {The pseudo-effective cone of a compact {K{\"a}hler} manifold and varieties of negative {Kodaira} dimension},
 fjournal = {Journal of Algebraic Geometry},
 journal = {J. Algebr. Geom.},
 issn = {1056-3911},
 volume = {22},
 number = {2},
 pages = {201--248},
 year = {2013},
 
 doi = {10.1090/S1056-3911-2012-00574-8},
 keywords = {32Q15,32J27},
 zbMATH = {6154993},
 Zbl = {1267.32017}
}

@book {DemaillyBook,
    AUTHOR = {Demailly, Jean-Pierre},
     TITLE = {Complex analytic and differential geometry},
    PUBLISHER = {OpenContentBook available from the following {URL} \texttt{https://www-fourier.ujf-grenoble.fr/~demailly/documents.html}},
      YEAR = {2012},
}

@book {FL,
    AUTHOR = {Fulton, William and Lang, Serge},
     TITLE = {Riemann-{R}och algebra},
    SERIES = {Grundlehren der Mathematischen Wissenschaften [Fundamental
              Principles of Mathematical Sciences]},
    VOLUME = {277},
 PUBLISHER = {Springer-Verlag, New York},
      YEAR = {1985},
}

@book {HL,
    AUTHOR = {Huybrechts, Daniel and Lehn, Manfred},
     TITLE = {The geometry of moduli spaces of sheaves},
    SERIES = {Cambridge Mathematical Library},
   EDITION = {Second},
 PUBLISHER = {Cambridge University Press},
   ADDRESS = {Cambridge},
      YEAR = {2010},
}

@book{KobayashiVectorBundles,
    AUTHOR = {Kobayashi, Shoshichi},
     TITLE = {Differential geometry of complex vector bundles},
    SERIES = {Publications of the Mathematical Society of Japan},
    VOLUME = {15},
 PUBLISHER = {Princeton University Press},
   ADDRESS = {Princeton, NJ},
      YEAR = {1987},
     PAGES = {xii+305},
}

@book {LuTe,
    AUTHOR = {L{\"u}bke, Martin and Teleman, Andrei},
     TITLE = {The {K}obayashi-{H}itchin correspondence},
 PUBLISHER = {World Scientific Publishing Co., Inc., River Edge, NJ},
      YEAR = {1995},
}

@article{ChenWentworth,
 author = {Chen, Xuemiao and Wentworth, Richard A.},
 title = {The nonabelian {Hodge} correspondence for balanced {Hermitian} metrics of {Hodge}-{Riemann} type},
 fjournal = {Mathematical Research Letters},
 journal = {Math. Res. Lett.},
 issn = {1073-2780},
 volume = {31},
 number = {3},
 pages = {639--654},
 year = {2024},
 
 doi = {10.4310/MRL.241113035327},
 keywords = {32Q15,32J27,32G15,32G13,32G20,32U05,53C07,53C55},
 zbMATH = {7946732},
 Zbl = {1557.32011}
}

@preamble{
   "\def\cprime{$'$} "
}

@article{Simpson-HiggsBundles,
 author = {Simpson, Carlos T.},
 title = {Constructing variations of {Hodge} structure using {Yang}-{Mills} theory and applications to uniformization},
 fjournal = {Journal of the American Mathematical Society},
 journal = {J. Am. Math. Soc.},
 issn = {0894-0347},
 volume = {1},
 number = {4},
 pages = {867--918},
 year = {1988},
 
 doi = {10.2307/1990994},
 keywords = {58E15,14F05,53C05,32M15},
 zbMATH = {4096412},
 Zbl = {0669.58008}
}

@article{Cardona-HiggsBundles,
 author = {Cardona, S. A. H.},
 title = {Approximate {Hermitian}-{Yang}-{Mills} structures and semistability for {Higgs} bundles. {II}: {Higgs} sheaves and admissible structures},
 fjournal = {Annals of Global Analysis and Geometry},
 journal = {Ann. Global Anal. Geom.},
 issn = {0232-704X},
 volume = {44},
 number = {4},
 pages = {455--469},
 year = {2013},
 
 doi = {10.1007/s10455-013-9376-y},
 keywords = {58J52,58E15,53C07},
 zbMATH = {6254033},
 Zbl = {1303.58012}
}

@article{Nie-Zhang-HiggsGauduchon,
 author = {Nie, Yanci and Zhang, Xi},
 title = {Semistable {Higgs} bundles over compact {Gauduchon} manifolds},
 fjournal = {The Journal of Geometric Analysis},
 journal = {J. Geom. Anal.},
 issn = {1050-6926},
 volume = {28},
 number = {1},
 pages = {627--642},
 year = {2018},
 
 doi = {10.1007/s12220-017-9835-y},
 keywords = {32L05},
 zbMATH = {6859120},
 Zbl = {1391.32026}
}

@article{Chen-Wentworth-nonabelianHodge,
 author = {Chen, Xuemiao and Wentworth, Richard A.},
 title = {The nonabelian {Hodge} correspondence for balanced {Hermitian} metrics of {Hodge}-{Riemann} type},
 fjournal = {Mathematical Research Letters},
 journal = {Math. Res. Lett.},
 issn = {1073-2780},
 volume = {31},
 number = {3},
 pages = {639--654},
 year = {2024},
 
 doi = {10.4310/MRL.241113035327},
 keywords = {32Q15,32J27,32G15,32G13,32G20,32U05,53C07,53C55},
 zbMATH = {7946732},
 Zbl = {1557.32011}
}

@article{Biswas-Schumacher-HiggsSheaves,
 author = {Biswas, Indranil and Schumacher, Georg},
 title = {Yang-Mills equation for stable {Higgs} sheaves},
 fjournal = {International Journal of Mathematics},
 journal = {Int. J. Math.},
 issn = {0129-167X},
 volume = {20},
 number = {5},
 pages = {541--556},
 year = {2009},
 
 doi = {10.1142/S0129167X09005406},
 keywords = {53C07,32L05},
 zbMATH = {5573083},
 Zbl = {1169.53017}
}

@preamble{
   "\def\polhk#1{\setbox0=\hbox{#1}{\ooalign{\hidewidth
    \lower1.5ex\hbox{`}\hidewidth\crcr\unhbox0}}} "
}

@article{Demailly-regularization,
 author = {Demailly, Jean-Pierre},
 title = {Regularization of closed positive currents and intersection theory},
 fjournal = {Journal of Algebraic Geometry},
 journal = {J. Algebr. Geom.},
 issn = {1056-3911},
 volume = {1},
 number = {3},
 pages = {361--409},
 year = {1992},
 
 keywords = {32J25,32C30,32S60},
 zbMATH = {232900},
 Zbl = {0777.32016}
}

@article{ChioseRasdeaconuSuvaina2019,
 author = {Chiose, Ionu{\c{t}} and R{\u{a}}sdeaconu, Rare{\c{s}} and {\c{S}}uvaina, Ioana},
 title = {Balanced metrics on uniruled manifolds},
 fjournal = {Communications in Analysis and Geometry},
 journal = {Commun. Anal. Geom.},
 issn = {1019-8385},
 volume = {27},
 number = {2},
 pages = {329--355},
 year = {2019},
 
 doi = {10.4310/CAG.2019.v27.n2.a3},
 keywords = {53C55,32Q10},
 zbMATH = {7098081},
 Zbl = {1467.53074}
}

@incollection{BandoSiu,
 author = {Bando, Shigetoshi and Siu, Yum-Tong},
 title = {Stable sheaves and {Einstein}-{Hermitian} metrics},
 booktitle = {Geometry and analysis on complex manifolds. Festschrift for Professor S. Kobayashi's 60th birthday},
 isbn = {981-02-2067-7},
 pages = {39--59},
 year = {1994},
 publisher = {Singapore: World Scientific},
 
 keywords = {32C35,32Q20,53C55,53C07},
 zbMATH = {1058112},
 Zbl = {0880.32004}
}

@article{Alessandrini,
 author = {Alessandrini, Lucia},
 title = {Forms and currents defining generalized {{\(p\)}}-{K{\"a}hler} structures},
 fjournal = {Abhandlungen aus dem Mathematischen Seminar der Universit{\"a}t Hamburg},
 journal = {Abh. Math. Semin. Univ. Hamb.},
 issn = {0025-5858},
 volume = {88},
 number = {1},
 pages = {217--245},
 year = {2018},
 
 doi = {10.1007/s12188-018-0193-x},
 keywords = {53C55,53C56,32J27},
 zbMATH = {6888220},
 Zbl = {1396.53096}
}

@article{AlessandriniAndreatta,
 author = {Alessandrini, Lucia and Andreatta, Marco},
 title = {Closed transverse (p,p)-forms on compact complex manifolds},
 fjournal = {Compositio Mathematica},
 journal = {Compos. Math.},
 issn = {0010-437X},
 volume = {61},
 pages = {181--200},
 year = {1987},
 
 keywords = {53B35,32Q99,58A25},
 url = {https://eudml.org/doc/89821},
 zbMATH = {4005073},
 Zbl = {0619.53019}
}

@article{Witt,
 author = {Witt Nystr{\"o}m, David},
 title = {Duality between the pseudoeffective and the movable cone on a projective manifold},
 fjournal = {Journal of the American Mathematical Society},
 journal = {J. Am. Math. Soc.},
 issn = {0894-0347},
 volume = {32},
 number = {3},
 pages = {675--689},
 year = {2019},
 
 doi = {10.1090/jams/922},
 keywords = {32L10,32Q15,32U05},
 zbMATH = {7085527},
 Zbl = {1429.32031}
}

@article{Bogomolov_holomorphictensors,
 author = {Bogomolov, F. A.},
 title = {Holomorphic tensors and vector bundles on projective varieties},
 fjournal = {Mathematics of the USSR. Izvestiya},
 journal = {Math. USSR, Izv.},
 issn = {0025-5726},
 volume = {13},
 pages = {499--555},
 year = {1979},
 
 doi = {10.1070/IM1979v013n03ABEH002076},
 keywords = {14F05,32L05,14L24,14M17},
 zbMATH = {3685637},
 Zbl = {0439.14002}
}

@article {Kleiman,
    AUTHOR = {Kleiman, Steven L.},
     TITLE = {Toward a numerical theory of ampleness},
   JOURNAL = {Ann. of Math. (2)},
    FJOURNAL = {Annals of Mathematics. Second Series},
    VOLUME = {84},
      YEAR = {1966},
     PAGES = {293--344},
     }

@article {Langer,
    AUTHOR = {Langer, Adrian},
     TITLE = {Semistable sheaves in positive characteristic},
   JOURNAL = {Ann. of Math. (2)},
  FJOURNAL = {Annals of Mathematics. Second Series},
    VOLUME = {159},
      YEAR = {2004},
     PAGES = {251--276},
      ISSN = {0003-486X},
      DOI = {10.4007/annals.2004.159.251},
       URL = {http://dx.doi.org/10.4007/annals.2004.159.251},
}

@article {k,
    AUTHOR = {K. Kopfermann},
     TITLE = {Maximale Untergruppen abelscher Komplexer Liescher Gruppen},
   JOURNAL = {Schr. Math. Inst. Univ. M\" unster},
    FJOURNAL = {Schr. Math. Inst. Univ. M\" unster},
    VOLUME = {29},
      YEAR = {1964},
     PAGES = {1--2},
     }

@book{Bismut_superconnections,
 author = {Bismut, Jean-Michel and Shen, Shu and Wei, Zhaoting},
 title = {Coherent sheaves, superconnections, and {Riemann}-{Roch}-{Grothendieck}},
 fseries = {Progress in Mathematics},
 series = {Prog. Math.},
 issn = {0743-1643},
 volume = {347},
 isbn = {978-3-031-27233-2; 978-3-031-27236-3; 978-3-031-27234-9},
 year = {2023},
 publisher = {Cham: Birkh{\"a}user},
 
 doi = {10.1007/978-3-031-27234-9},
 keywords = {14-01,14F06,14C17,14Mxx},
 zbMATH = {7712521}
}

@article {DELV11,
    AUTHOR = {Debarre, Olivier and Ein, Lawrence and Lazarsfeld, Robert and
              Voisin, Claire},
     TITLE = {Pseudoeffective and nef classes on abelian varieties},
   JOURNAL = {Compos. Math.},
  FJOURNAL = {Compositio Mathematica},
    VOLUME = {147},
      YEAR = {2011},
    NUMBER = {6},
     PAGES = {1793--1818},
      ISSN = {0010-437X},
   MRCLASS = {14C25 (14K05)},
  MRNUMBER = {2862063},
MRREVIEWER = {Fumio Hazama},
       DOI = {10.1112/S0010437X11005227},
       URL = {https://doi-org.proxy.cc.uic.edu/10.1112/S0010437X11005227},
}

@article {DinhNguyen06,
    AUTHOR = {Dinh, Tien-Cuong and Nguy\^{e}n, Vi\^{e}t-Anh},
     TITLE = {The mixed {H}odge-{R}iemann bilinear relations for compact
              {K}\"{a}hler manifolds},
   JOURNAL = {Geom. Funct. Anal.},
  FJOURNAL = {Geometric and Functional Analysis},
    VOLUME = {16},
      YEAR = {2006},
    NUMBER = {4},
     PAGES = {838--849},
      ISSN = {1016-443X},
   MRCLASS = {32Q15 (32G20 32J25)},

MRREVIEWER = {Qi Lin Yang},
       DOI = {10.1007/s00039-006-0572-9},
       URL = {https://doi-org.proxy.cc.uic.edu/10.1007/s00039-006-0572-9},
}

@article {FultonLazarsfeld,
    AUTHOR = {Fulton, William and Lazarsfeld, Robert},
     TITLE = {Positive polynomials for ample vector bundles},
   JOURNAL = {Ann. of Math. (2)},
  FJOURNAL = {Annals of Mathematics. Second Series},
    VOLUME = {118},
      YEAR = {1983},
    NUMBER = {1},
     PAGES = {35--60},
      ISSN = {0003-486X},
   MRCLASS = {14F05 (14C17)},
MRREVIEWER = {Allen B. Altman},
       DOI = {10.2307/2006953},
       URL = {https://doi-org.proxy.cc.uic.edu/10.2307/2006953},
}

@book {Lazbook2,
    AUTHOR = {Lazarsfeld, Robert},
     TITLE = {Positivity in algebraic geometry. {II}},
    SERIES = {Ergebnisse der Mathematik und ihrer Grenzgebiete. 3. Folge. A
              Series of Modern Surveys in Mathematics [Results in
              Mathematics and Related Areas. 3rd Series. A Series of Modern
              Surveys in Mathematics]},
    VOLUME = {49},
      NOTE = {Positivity for vector bundles, and multiplier ideals},
 PUBLISHER = {Springer-Verlag, Berlin},
      YEAR = {2004},
     PAGES = {xviii+385},
      ISBN = {3-540-22534-X},
   MRCLASS = {14-02 (14C20 14F05 14F17)},
MRREVIEWER = {Mihnea Popa},
       DOI = {10.1007/978-3-642-18808-4},
       URL = {https://doi-org.proxy.cc.uic.edu/10.1007/978-3-642-18808-4},
}
\bibliographystyle{amsalpha}

\end{document}